\documentclass[a4paper,11pt]{article}
\usepackage[margin=1in]{geometry}

\usepackage{amsthm,amsmath,amssymb,amsfonts}

\usepackage[bookmarks=true, colorlinks=true, linkcolor=black, citecolor=black, urlcolor=black, plainpages=false, pdfpagelabels,final]{hyperref}
\usepackage{cleveref}
\usepackage{xcolor}
\usepackage{colonequals}

\usepackage[showdeletions]{color-edits}
\definecolor{darkcyan}{rgb}{0.0, 0.55, 0.55}
\addauthor[Melanie]{mg}{Peach}
\addauthor[EH]{eh}{darkcyan}

\usepackage[backend=bibtex,style=alphabetic,giveninits=true,doi=false,isbn=false,url=false,sorting=nyt,maxbibnames=99,date=year,maxalphanames=999]{biblatex}
\renewbibmacro{in:}{} 
\def \RR {\mathbb{R}}
\def \eps {\varepsilon}
\def \loc {\mathrm{loc}}
\def \Wloc {W_{\mathrm{loc}}}
\def \Lloc {L_{\mathrm{loc}}}
\def \Cloc {C_\mathrm{loc}}
\def \Vol {\mathrm{Vol}}
\def \D {\mathcal{D}}
\def \Ric {\mathrm{Ric}}
\def \Riem {\mathrm{Riem}}
\def \dgeps {d\mu_{g_\varepsilon}}
\def \dg {d\mu_{g}}
\def \la {\langle}
\def \ra {\rangle}
\def \gtilde{\widetilde{g}}
\def \RCD {{\sf RCD}}

\DeclareMathOperator \supp {supp}
\DeclareMathOperator \tr {tr}

\newcommand\restr[2]{{
  \left.\kern-\nulldelimiterspace 
  #1 
  \vphantom{\big|} 
  \right|_{#2} 
  }}

\theoremstyle{plain}
\newtheorem{theorem}{Theorem}[section]
\numberwithin{equation}{section}

\newtheorem{definition}[theorem]{Definition}
\newtheorem{proposition}[theorem]{Proposition}
\newtheorem{corollary}[theorem]{Corollary} 
\newtheorem{lemma}[theorem]{Lemma}
\newtheorem{remark}[theorem]{Remark}

\crefname{lemma}{Lemma}{Lemmas}
\crefname{equation}{Equation}{Equations}

\begin{document}

\title{A low-regularity Riemannian positive mass theorem for non-spin manifolds with distributional curvature}

\author{
Eduardo Hafemann
\thanks{Fachbereich Mathematik, Universit\"at Hamburg, Bundesstraße 55, 20146 Hamburg, Germany. Email: \href{mailto:eduardo.hafemann@uni-hamburg.de}{\tt eduardo.hafemann@uni-hamburg.de}}}
\maketitle

\begin{center}
Abstract
\end{center}
This article establishes a low-regularity Riemannian positive mass theorem for non-spin manifolds whose metrics are only $C^0 \cap W_{\mathrm{loc}}^{1,n}$ and smooth outside a compact set. The main theorem asserts that asymptotically flat manifolds with nonnegative distributional scalar curvature have nonnegative ADM mass. The proof uses smooth approximations of the metric together with a Sobolev version of Friedrichs' Lemma, which yields improved convergence for commutators between differentiation and convolution operators. Rigidity in the metric-space sense is obtained via the volume comparison theory of $\sf{RCD}$-spaces after establishing that zero ADM mass implies Ricci flatness, which fundamentally relies on these improved estimates. In essence, a version of the main theorem of Lee-LeFloch \cite{LeeLeFloch2015} is presented in which the spin condition is removed under the assumption that the metric is smooth outside a compact set.

\bigskip\noindent{\small \textbf{Keywords} \: Positive mass theorem, low-regularity, distributional curvature, scalar curvature}

\tableofcontents

\clearpage

\section{Introduction}

The celebrated theorem known as the Riemannian positive mass theorem stands as one of the fundamental theorems in mathematical relativity:
\begin{theorem} \label{theo:OriginalPMT}
Let $(M^n, g)$ be a complete, smooth asymptotically flat manifold, and suppose that the scalar curvature of $g$ is integrable and nonnegative. Then the ADM mass of $(M^n, g)$ is nonnegative. Moreover, the ADM mass is zero if and only if $(M^n, g)$ is isometric to the Euclidean space $(\RR^n, \delta)$.
\end{theorem}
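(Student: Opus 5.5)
This is the classical theorem of Schoen--Yau and Witten; in full generality it is due to Schoen--Yau in dimensions $n\le 7$ and, via $\mu$-bubbles or Lohkamp's approach, in all dimensions. I would follow the reduction-plus-contradiction route and rely on the standard literature for the hardest step.

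\textbf{Step 1 (reductions).} Using the conformal and density arguments of Schoen--Yau, I would replace $g$ by a metric that is harmonically flat at infinity, i.e.\ $g = u^{4/(n-2)}\delta$ outside a compact set with $\Delta_\delta u = 0$, and with $R_g \ge 0$. This changes the ADM mass by an arbitrarily small amount. The construction perturbs $g$ towards $\delta$ with cutoffs near infinity. Integrability of $R_g$ makes the resulting negative scalar curvature $L^{n/2}$-small, and it is then removed by solving the conformal Laplacian equation $-\frac{4(n-1)}{n-2}\Delta_g w + R_g w = 0$ with $w\to 1$ at infinity. Expanding $u = 1 + \frac{m}{2}|x|^{2-n} + O(|x|^{1-n})$ identifies $m$ with the ADM mass.

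\textbf{Step 2 (negative mass yields a contradiction).} If $m<0$, a further conformal change and cutoff give a metric that agrees with $\delta$ outside a large cube and has $R \ge 0$, with $R>0$ somewhere. Lohkamp's compactification then turns this into a metric of nonnegative, not identically zero, scalar curvature on $T^n \# N$. I would argue that such a metric is impossible. For $n\le 7$ this follows from Schoen--Yau minimal hypersurface descent, or alternatively from $\mu$-bubbles. In the spin case Witten's spinor argument gives the contradiction directly, and in all dimensions one can appeal to Schoen--Yau's later work on obstructions or to Lohkamp. Hence $m\ge 0$.

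\textbf{Step 3 (rigidity).} Suppose $m=0$. If $R_g\not\equiv 0$, a conformal change by the solution $w<1$ of $-c_n\Delta w + tR_g w=0$ lowers the mass strictly, which contradicts Step 2; so $R_g\equiv 0$. If $\Ric\ne 0$, I would perturb $g_t = g - t\eta\Ric$ with $\eta$ compactly supported. The first variation of the scalar curvature is then $t\eta|\Ric|^2$ plus divergence terms, and conformally correcting to zero scalar curvature lowers the mass to first order in $t$, again a contradiction. So $g$ is Ricci flat and asymptotically flat. Bishop--Gromov, with volume ratio tending to $1$ at infinity, then forces $(M,g)\cong(\RR^n,\delta)$.

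The main obstacle is Step 2 in high dimensions and for non-spin $M$. There the minimal-surface descent faces singularities, and I would simply invoke the established literature, whether Schoen--Yau's argument or a $\mu$-bubble argument, rather than reprove it.
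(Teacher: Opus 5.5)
Your outline is correct and follows the standard Schoen--Yau/Lohkamp route, delegating the dimension-dependent obstruction (no metric of positive scalar curvature on $T^n\# N$) to the literature. That matches the paper, which does not prove Theorem~\ref{theo:OriginalPMT} but quotes it from Schoen--Yau, Witten and the later higher-dimensional works. Your Step~3 (scalar flatness by a conformal change, Ricci flatness by a compactly supported perturbation plus conformal correction, then Bishop--Gromov) is exactly the smooth template the paper adapts to low regularity in \Cref{prop:scalar_flat}, \Cref{prop:Ricciflat} and \Cref{theo:PMT_rigidity}. One small correction to Step~1: the $L^{n/2}$-smallness of the negative scalar curvature created by the cutoff comes from the decay rate $\tau>(n-2)/2$, since its $L^{n/2}$ norm on the annulus $\sigma<|x|<2\sigma$ is $O(\sigma^{-\tau})$. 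Integrability of $R_g$ is instead what makes the mass well defined and controls the mass change.
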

This theorem was first proved by Schoen and Yau \cite{PMTSchoenYau} for $3\leq n \leq 7$, and the higher-dimensional cases have been treated in \cite{Schoen2019,Lohkamp2016}, and more recently in \cite{ChodoshMantoulidisSchulze2023, ChodoshMantoulidisSchulzeWang2025,BiHaoHeShiZhu26, BrendleWang2026}. Meanwhile, Witten \cite{Witten1981} extended the result to higher dimensions for spin manifolds. Bartnik \cite{Bartnik1986} showed that the ADM mass is a geometric invariant, i.e., independent of the choice of coordinates at infinity if the metric is $\Wloc^{2,p}$, with $p>n$. It is worth mentioning that Witten's spinor argument works under this metric regularity as well. 

The positive mass theorem, particularly its rigidity case, can be seen as a result that links geometry and topology via scalar curvature lower bounds. Although scalar curvature is classically defined using second derivatives of the metric, there is evidence that scalar curvature lower bounds may be a $C^0$ property of the metric. This idea is supported by a result of Gromov~\cite{Gromov2014} who, using a newly proposed polyhedral comparison theory for positive scalar curvature, showed that scalar curvature lower bounds are closed under $C^0$ limits of $C^2$ metrics, provided the limiting metric is $C^2$ (see \cite{Bamler2016} for a Ricci-flow proof). This suggests that, provided a suitable weak notion of scalar curvature and an appropriate notion of mass, a $C^0$ positive mass theorem may hold, that is, a weak curvature condition for $C^0$ metrics may imply the nonnegativity of a geometric invariant (mass), with vanishing mass implying isometry to Euclidean space. This problem is also relevant to mathematical relativity, since, for time-symmetric initial data sets satisfying the Einstein constraint equations, the dominant energy condition reduces to nonnegative scalar curvature, and studying this problem may reveal how energy conditions are encoded at the $C^0$ level of the metric (see \cite{CavallettiMondino2022} for synthetic energy conditions involving Ricci curvature lower bounds compatible with $C^0$ metrics). Therefore, it is geometrically and physically interesting to inquire whether the assumptions on the metric can be relaxed and whether the classical positive mass theorem can be extended to nonsmooth metrics. A first step in this investigation is to consider metrics for which scalar curvature can be defined only as a distribution.

In this regard, several results concerning the positive mass theorem for nonsmooth metrics have been established. For manifolds with corners, Miao \cite{Miao2002} showed that the mass is nonnegative by smoothing the metrics and then deforming them conformally. The rigidity case was resolved by McFeron and Székelyhidi \cite{McFeron2012-gs} using Ricci flow techniques. Lower-dimensional singular sets are also considered by Lee \cite{Lee2013} through the same conformal deformation method. 

At even lower regularity, Lee-LeFloch \cite{LeeLeFloch2015} introduced a distributional definition of scalar curvature and ADM mass suitable for $C^0 \cap \Wloc^{1,n}$ metrics, and showed that Witten's argument can be extended in this setting for $n$-dimensional spin manifolds. It would be desirable to obtain an analogous theorem for non-spin manifolds with the same regularity. Several steps toward such a result have been made, for instance, see \cite{grant2014positive, Li2020, Jiang2022}. Grant and Tassotti \cite{grant2014positive} showed the nonnegativity of the ADM mass for non-spin manifolds with $C^0 \cap \Wloc^{2, n/2}$ metrics that are smooth away from a compact set. Using Ricci flow, Li \cite{Li2020} obtained a non-spin result for $C^0 \cap \Wloc^{1,p}$, $p>n$, metrics under a bounded curvature condition. However, the bounded curvature condition already implies a higher metric regularity, namely, $g \in \Wloc^{2,p}$, $p>n$ (see \cite[Theorem 3.1]{Li2020}). Jiang, Sheng and Zhang \cite{Jiang2022} improved the regularity to $C^0 \cap \Wloc^{1,p}$, $p \in [n, \infty]$, while assuming that the metric is smooth away from a singular set $\Sigma$ with Hausdorff measure $\mathcal{H}^{n-1}(\Sigma)=0$ if $p=\infty$ or $\mathcal{H}^{n-\frac{p}{p-1}}(\Sigma)$ is finite if $p<\infty$. This result can be viewed as an extension of previous results \cite{Lee2013, LeeLeFloch2015, Shi2018}.

This paper focuses on a low-regularity version of the positive mass theorem for $C^0 \cap \Wloc^{1, n}$ metrics that are smooth outside a compact set $K\subset M$, without the spin assumption or the additional conditions on the singular set imposed in previous works. This setting is physically natural because it allows the nonsmooth behaviour of the metric to be confined to a compact region while retaining a smooth asymptotically flat end, on which the ADM mass is still well-defined classically. For example, related rough $3$-dimensional asymptotically flat vacuum initial data were constructed by Maxwell \cite{Maxwell2006}, with metrics in $H^s_\loc$, $s>3/2$. In particular, this regularity is stronger than $C^0 \cap \Wloc^{1, 3}$ but does not imply Lipschitz continuity when $s<5/2$. The main result is stated as follows. 

\begin{theorem}\label{theo:PMT}
    Let $M^n$, $n\geq 3$, be a smooth manifold endowed with a complete, asymptotically flat Riemannian metric $g \in C^0\cap \Wloc^{1,n}$, smooth outside a compact set. If the distributional scalar curvature $R[g]$ is nonnegative, then the ADM mass $m(g)$ is nonnegative. Moreover, $m(g) = 0$ if and only if $(M^n, d_g)$ is isometric to $(\RR^n, d_\delta)$ as a metric space. If, in addition, $g \in \Wloc^{1,p}$, $p>n$, then there exists a $C^{1,1-\frac{n}{p}}$ isometry between $(M, g)$ and $(\RR^n, \delta)$. 
\end{theorem}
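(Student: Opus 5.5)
The plan is to approximate $g$ by smooth metrics, apply the classical \Cref{theo:OriginalPMT} after a conformal correction, and pass to the limit. The fact that $g$ is smooth outside a compact set $K$ lets us keep the asymptotic end untouched.

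\textbf{Smoothing.} Choose a finite atlas covering a neighbourhood of $K$ and a partition of unity. Mollify $g$ in each chart and glue to obtain $g_\eps$, smooth, with $g_\eps = g$ outside a compact neighbourhood $K'$ of $K$. Then $g_\eps\to g$ in $C^0\cap W^{1,n}_\loc$, uniformly equivalent to $g$, and $m(g_\eps)=m(g)$.

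\textbf{Controlling the negative part of the scalar curvature.} Write the distributional scalar curvature in the Lee--LeFloch form
\[
R[g] = \nabla_k V^k + F,
\]
where $V$ is linear in $\partial g$ and $F$ is quadratic in $\partial g$. Here the Sobolev Friedrichs lemma is the key input. Comparing $R[g_\eps]$ with the mollification $R[g]*\rho_\eps\geq 0$, the commutator $[\partial,\,*\rho_\eps]$ applied to $g^{-1}\partial g$-type products tends to zero in $L^{n/2}$. The quadratic terms $F(g_\eps)-F(g)*\rho_\eps$ also tend to zero in $L^{n/2}$, since $\partial g_\eps\to\partial g$ in $L^n$. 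Hence $\|R[g_\eps]_-\|_{L^{n/2}(K')}\to 0$.

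\textbf{Conformal correction.} Solve
\[
-a_n\Delta_{g_\eps}u_\eps + R[g_\eps]_- \, u_\eps = 0,\qquad u_\eps\to1 \text{ at infinity},
\]
with $a_n = \tfrac{4(n-1)}{n-2}$. Smallness of the potential in $L^{n/2}$, together with the uniform Sobolev inequality for $g_\eps$ (from uniform equivalence to $g$ and asymptotic flatness), gives existence with $u_\eps>0$ and $\|u_\eps-1\|_{L^{2^*}}+\|\nabla u_\eps\|_{L^2}\to0$. The metric $\tilde g_\eps=u_\eps^{4/(n-2)}g_\eps$ then has $R\geq0$, so by \Cref{theo:OriginalPMT} (using the non-spin higher-dimensional results cited) $m(\tilde g_\eps)\geq0$. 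The mass changes by
\[
m(\tilde g_\eps)=m(g)-c_n\int R[g_\eps]_- \, u_\eps\, d\mu_{g_\eps},
\]
and the integral tends to zero, so $m(g)\geq0$.

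\textbf{Rigidity.} Assume $m(g)=0$. The first target is to show $\Ric=0$ distributionally, following Schoen--Yau: perturb $g$ by $g+t h$, with $h$ a compactly supported smooth symmetric tensor, and conformally correct as above. The first-order change in mass is $-c\int\langle\Ric,h\rangle$, so if $\Ric\neq 0$ some $h$ makes the mass negative, a contradiction. Making this rigorous at $C^0\cap W^{1,n}$ regularity, with $\Ric$ only a distribution, is the main obstacle. The plan is to carry out the argument with $g_\eps$, using the improved commutator estimates to show that the $\eps$-errors are $o(1)$ uniformly in small $t$, with the linearization controlled in $L^{n/2}$ duality.

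\textbf{From Ricci-flatness to isometry.} With $\Ric\geq0$ distributionally and $g\in C^0\cap W^{1,n}$, cite the known result that $(M,d_g,\mu_g)$ is then an $\RCD(0,n)$ space. Asymptotic flatness gives asymptotic volume ratio $1$, and Bishop--Gromov rigidity (De Philippis--Gigli volume cone rigidity) shows the space is isometric to $\RR^n$.

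\textbf{Higher regularity when $p>n$.} If $g\in W^{1,p}_\loc$ with $p>n$, use harmonic coordinates: the equation $\Ric=0$ reads $\Delta g_{ij}=Q(g,\partial g)$. Elliptic theory then gives $g\in C^{1,1-n/p}$, actually smooth, and the metric isometry is upgraded via Calabi--Hartman or Taylor's Lipschitz/$W^{1,p}$ isometry regularity to a $C^{1,1-n/p}$ map.
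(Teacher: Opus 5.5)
Your nonnegativity argument follows the paper's route. The rigidity part, however, has a genuine gap: the step you label ``the main obstacle'' is the bulk of the proof, and the plan you sketch for it would not work as stated.

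First, ``conformally correct as above'' cannot produce a contradiction. Killing only the negative part of the scalar curvature raises the mass (see the sign remark below), so with $m(g_{\eps,t})=m(g)=0$ you only get $m(\gtilde_{\eps,t})\ge0$. You must instead solve the full equation $-c_n\Delta_{g_{\eps,t}}u_{\eps,t}+R[g_{\eps,t}]u_{\eps,t}=0$. This gives a scalar-flat metric with $2(n-1)\omega_{n-1}m(\gtilde_{\eps,t})=-\int_M R[g_{\eps,t}]u_{\eps,t}\,d\mu_{g_{\eps,t}}$.

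The difficulty is that $R[g_{\eps,t}]=R[g_\eps]+t\,DR_{g_\eps}(q)+O(t^2)$, and $DR_{g_\eps}(q)$ contains $\langle\Ric[g_\eps],q\rangle$ and second-derivative terms. These are in general not bounded in $L^{n/2}$ as $\eps\to0$ when $g$ is only $W^{1,n}$, since the limit $\Ric[g]$ is merely $\partial\Gamma+\Gamma\Gamma$ with $\Gamma\in L^n$. The gain is only linear in $t$, while the $\eps$-errors (e.g.\ $\int_M R[g_\eps]\,\dgeps$) come without any rate. So you must fix $t$ and let $\eps\to0$, which requires the $t$-dependent errors to be $O(t^2)$ \emph{uniformly in $\eps$}. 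Neither \Cref{lemma:MiaoConfMetric} nor ``$L^{n/2}$ duality'' provides this.

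The paper's key device is \Cref{lemma:ScalarLowerBound}. One writes $R$ and $\Ric$ in divergence form $\partial_kV^k+Q$, with $V$ uniformly bounded in $L^n\cap L^2$ and $Q$ in $L^{n/2}\cap L^{2n/(n+2)}$. Integrating by parts onto $wv$ and using the Sobolev inequality gives the form bound $\bigl|\int R[g_{\eps,t}]wv\,d\mu_{g_{\eps,t}}\bigr|\le(A(\eps)+C_1t+C_2t^2)\|\nabla w\|_{L^2}\|\nabla v\|_{L^2}$. This bound yields three things:
\begin{itemize}
\item a trivial kernel for $-c_n\Delta_{g_{\eps,t}}+R[g_{\eps,t}]$ on $W^{2,p}_{-\tau}$, hence solvability since the index is zero;
\item positivity of $u_{\eps,t}$, by testing with its negative part;
\item the estimate $\|\nabla u_{\eps,t}\|_{L^2}\lesssim A(\eps)+t$.
\end{itemize}

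Second, both $A(\eps)\to0$ and $\int_M R[g_\eps]\,\dgeps\to0$ require knowing $R[g]=0$ beforehand. Only then is $R[g]\star\rho_\eps=0$, so that \Cref{prop:ConvofRs} forces $R[g_\eps]\to0$ in $L^{n/2}$ on a fixed compact set. Hence a separate scalar-flatness step must come first, as in \Cref{prop:scalar_flat}, using the potential $tR[g_\eps]_+\varphi-R[g_\eps]_-$. Your plan omits this step.

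There are also smaller issues. Your conformal equation has the wrong sign. With $-a_n\Delta_{g_\eps}u_\eps+R[g_\eps]_-u_\eps=0$ you get $R[\gtilde_\eps]=u_\eps^{-4/(n-2)}\bigl(R[g_\eps]_+-2R[g_\eps]_-\bigr)$, which is not nonnegative. The correct equation is $-a_n\Delta_{g_\eps}u_\eps-R[g_\eps]_-u_\eps=0$. Then $R[\gtilde_\eps]=u_\eps^{-4/(n-2)}R[g_\eps]_+$, and the $L^{n/2}$-smallness against the Sobolev constant is exactly what gives existence and positivity. The mass goes up, $m(\gtilde_\eps)=m(g)+\frac{1}{2(n-1)\omega_{n-1}}\int_M R[g_\eps]_-u_\eps\,\dgeps$. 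Since this correction tends to zero, $m(g)\ge0$ still follows.

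The commutator that Friedrichs' lemma controls is not $[\partial,*\rho_\eps]$, which vanishes on $\RR^n$. It is $(a*\rho_\eps)(f*\rho_\eps)-(af)*\rho_\eps$ measured in $W^{1,n/2}$, with $a=g^{-1}g^{-1}$ and $f=\partial g$, together with the $O(\eps)$ rate for $g_\eps^{-1}-g^{-1}$.

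The $\RCD(0,n)$ and volume-cone rigidity step matches the paper. For $p>n$, your harmonic-coordinate bootstrap is a heavier detour: you would also need to check that distributional Ricci-flatness survives the merely $W^{2,p}$ change of coordinates. The paper simply combines Morrey's embedding $g\in C^{0,1-n/p}$ with the low-regularity Myers--Steenrod theorem quoted in \Cref{cor:RigidityW1p}.
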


It follows from the Sobolev embedding theorem that $\Wloc^{1,p} \subset C^0 \cap \Wloc^{1,n}$, for any $p>n$. Hence, the nonnegativity of the ADM mass in \Cref{theo:PMT} holds for $\Wloc^{1,p}$, $p>n$. For the Riemannian rigidity statement, we further require $p>n$, since the volume comparison argument used in its proof yields only a metric isometry, which is then promoted to a Riemannian isometry by the low-regularity Myers-Steenrod theorem \cite[Corollary C]{Matveev2017}, which requires $p>n$. The definition of distributional curvature follows the distribution theory on smooth manifolds according to \cite{LeFloch2007,Grosser2001, Gra:20}. Although the frameworks in \cite{LeeLeFloch2015} and \cite{Jiang2022} differ from the one adopted here, the definitions are equivalent, as discussed in \Cref{rmk:DistDef}.

As a consequence, for metrics that are smooth away from a compact set, we recover the theorem of Lee-LeFloch while removing the spin assumption. In particular, the result includes earlier theorems for 'piecewise regular' metrics by Miao \cite{Miao2002}, Shi and Tam \cite{Shi2002}, McFeron and Sz{\'e}kelyhidi \cite{McFeron2012-gs}, Lee \cite{Lee2013}, and Jiang, Sheng, and Zhang \cite{Jiang2022}. Furthermore, by the Sobolev embedding, the result also generalises Grant and Tassotti's result \cite{grant2014positive} for $\Wloc^{2, n/2}$ metrics that are $C^2$ and asymptotically flat outside a compact set. The results of \cite{Miao2002, Shi2002, McFeron2012-gs, Lee2013} follow directly from Propositions 5.1 and 5.2 in \cite{LeeLeFloch2015}, while the result of \cite{Jiang2022} follows from Lemma 2.7 in \cite{Jiang2022}.

\par \medskip

\noindent{}\textbf{Outline of Proof.} The proof of the main theorem is divided into two parts: the nonnegativity of the ADM mass and the rigidity case. For the nonnegativity, the argument relies on approximation techniques, Friedrichs' Lemma \cite{Gra:20, Calisti2025}, and a conformal deformation approach \cite{Miao2002, grant2014positive}. The rigidity part, in turn, builds on previously established results and comparison theory for $\RCD$-spaces (see \cite{Jiang2022, Gigli2018, mondino2025}). In what follows, we detail these arguments.

The conformal method consists of performing a conformal rescaling of a smooth approximating metric $g_\eps$ of $g$ in order to eliminate the negative part of its scalar curvature. Combining this procedure with the smooth positive mass theorem yields a sequence of metrics $\widetilde{g}_\eps$ with nonnegative ADM mass, satisfying $m(\widetilde{g}_\eps) \to m(g)$ as $\eps \to 0$. A key requirement of this method is a uniform $\Lloc^{n/2}$-smallness bound on the negative part of the scalar curvature of $g_\eps$. This can be ensured in several ways, e.g., if $g$ has regularity $C^0 \cap \Wloc^{2,n/2}$, then $R[g] \in \Lloc^{n/2}$ and one can follow the approach described in \cite{grant2014positive}. In contrast, for $C^0 \cap \Wloc^{1,n}$ metrics, the scalar curvature is only defined distributionally, and a different strategy is required.

A key tool for adapting the approach of \cite{grant2014positive} to $C^0 \cap \Wloc^{1,n}$ metrics is Friedrichs' Lemma (see \Cref{prop:FriedrichsWp}). This lemma provides improved convergence for commutators between differentiation and convolution operators. More precisely, let $a \in \Wloc^{1,p}\left(\RR^n\right)$ and $f \in \Lloc^p\left(\RR^n\right)$, with $p\geq2$, and let $\rho_\eps$ be the standard mollifier. Then
\begin{equation} \label{eq:intro_Fried}
    \|\left(a * \rho_{\eps}\right)\left(f * \rho_{\eps}\right)-(a f) * \rho_{\eps}\| _{W^{1,p/2}(K)} \longrightarrow 0 \text{ as } \eps \to 0,
\end{equation}
 for any compact $K \subset \RR^n$ (see \Cref{prop:FriedrichsWp} and \Cref{cor:ConvFriedPlusApproximations}).
 
 To see why this is useful, recall that the highest-order terms in the scalar curvature have the schematic form $\partial ( g^{-1} \partial g)$. If $g \in C^0 \cap \Wloc^{1,n}$, then $g^{-1} \partial g$ has precisely the structure $af$ appearing above. There are two natural regularisations to compare: the scalar curvature $R[g_\eps]$ of a mollified metric $g_\eps=g*\rho_\eps$, which roughly corresponds to $(a*\rho_\eps)(f*\rho_\eps)$, and the regularisation of the distributional scalar curvature $R[g]$ given by $R[g] *\rho_\eps$, corresponding to $(af)*\rho_\eps$. Although each of these approximations individually need only converge distributionally, Friedrichs' lemma, together with the convergence of the lower-order terms, shows that their difference converges one derivative better than one would initially expect, i.e., $\Lloc^{n/2}$ instead of $\D^\prime$. This provides the missing ingredient for controlling the $\Lloc^{n/2}$ norm of the negative part of $R[g_\eps]$, provided that $R[g]$ is a nonnegative distribution (see \Cref{prop:NegScalConv}), thereby allowing us to apply the conformal method. Notably, regularisation approaches based on Friedrichs' lemma have played a central role in low-regularity results in Lorentzian geometry, including singularity theorems \cite{KSSV:15,KSV:15,GGKS:18,Gra:20,KOSS:22,Calisti2025}, and now we apply it to low-regularity Riemannian geometry. Building on \cite{Calisti2025}, the lemma is here extended from $C^{0,1}$ to $C^0 \cap \Wloc^{1,p}$, $p\geq2$. 

Regarding the rigidity case, the main idea is to employ the volume comparison theory of $\RCD$ spaces to show that if an asymptotically flat manifold $(M, g)$ has nonnegative Ricci curvature in the $\RCD$ sense, then it must be isometric, as a metric space, to Euclidean space, as argued in \cite{Jiang2022}. Assuming additionally that $g \in W^{1,p}_\loc$, $p>n$, the isometry between metric spaces can then be promoted to an isometry between Riemannian manifolds via the low-regularity version of the Myers-Steenrod theorem \cite[Corollary C]{Matveev2017}. The main challenge, however, lies in proving that if the ADM mass is zero, then $(M, g)$ has nonnegative Ricci curvature in the $\RCD$ sense.

Interestingly, Friedrichs' lemma and \Cref{prop:ConvofRs} are the cornerstones of the argument used to address this challenge. The main difficulty lies in the fact that the standard rigidity argument cannot be applied directly to $g$, but must instead be applied to a family of conformally rescaled smooth metrics $\widetilde{g}_\eps$ with nonnegative scalar curvature, whose ADM masses need not to vanish. As in the smooth case, the idea is to show that if the distributional curvature (scalar or Ricci) is nontrivial, then $m(\widetilde{g}_\eps)<0$ for $\eps$ sufficiently small, thereby contradicting the smooth positive mass theorem. The estimates needed to prove scalar flatness rely on a combination of \Cref{prop:NegScalConv}, which follows from Friedrichs' lemma, and the technical estimate in \Cref{lemma:ScalarLowerBound}. The proof of Ricci flatness follows a similar procedure and the general argument of \cite[Lemma 4.1]{Jiang2022}, but establishes Ricci flatness globally rather than only outside a compact set, due to \Cref{prop:ConvofRs} and a careful analysis of the Laplacian. Consequently, we conclude that $(M, g)$ is Ricci-flat distributionally. By \cite[Theorem 7.2]{mondino2025}, it then has nonnegative Ricci curvature in $\RCD$ sense.
\par \medskip
\noindent{}\textbf{Organisation of the paper.} The definitions of asymptotic flatness and ADM mass are given in \Cref{sec:preliminaries}. We also review distributions on manifolds to define the distributional scalar curvature for metrics of class $C^0 \cap \Wloc^{1,2}$. \Cref{section:ScalarCurvature} presents the main tools and convergence results that allow us to obtain the main result at lower regularity than in \cite{grant2014positive} and to remove the spin assumption from \cite{LeeLeFloch2015} for metrics that are smooth outside a compact set. The proof of the main theorem is presented in \Cref{section:PMT}.
\par \medskip
\noindent{} \textbf{Acknowledgments.}\label{acknowledgments} 
I thank my supervisor, Melanie Graf, for suggesting this topic and for her patient guidance during our many discussions. Her support was essential to this work. I also acknowledge support from the Deutsche Forschungsgemeinschaft (DFG, German Research Foundation) under Germany’s Excellence Strategy – EXC 2121 "Quantum Universe" – 390833306.

\section{Preliminaries} \label{sec:preliminaries}

\subsection{ADM mass}

Let $g$ be a continuous Riemannian metric on a smooth manifold $M^n$ of dimension $n\geq 3$. The pair $(M, g)$ is said to be \textit{asymptotically flat} if there exists a compact set $K\subset M$ such that $M \setminus K$ is diffeomorphic to $\RR^n \setminus \overline{B_{1}(0)}$, i.e., there exists a diffeomorphism 
$$\Phi:  M \setminus K \to \RR^n \setminus \overline{B_1(0)},$$
where $\overline{B_1(0)}$ is the standard closed unit ball\footnote{For simplicity, we assume that the manifold has only one end $M \setminus K$. The generalisation to multiple ends is a standard argument.}. Moreover, if we think $\Phi$ as a coordinate chart with coordinates $x^1,\ldots, x^n$, then, in this coordinate chart, we assume that $g_{ij}$ is $C^2$ and satisfies
    \begin{align*}
        g_{ij} - \delta_{ij} & =  O_2(|x|^{-\tau}),
    \end{align*}
for some $\tau>(n-2)/2$. Here, for a $C^k$ function $f$, we write $f = O_k(|x|^{-\tau})$ if there exist constants $C, R>0$ such that $|f(x)| + |\partial f(x)| |x| + \ldots + |\partial^k f(x)| |x|^{k}\leq C |x|^{-\tau}$, for $|x|\geq R$,. We also assume that $R[g] \in L^1(M \setminus K)$.

\begin{remark}
For $C^0$ metrics, the Christoffel symbols are no longer defined pointwise, as a consequence, the existence theorem for geodesics does not apply, and Hopf-Rinow no longer holds. However, if $g$ is continuous, $(M, g)$ can be viewed as a metric space with a distance function $d_g$ induced by this metric. Therefore, we say $(M, g)$ is \textit{complete} if $(M, d_g)$ is a complete metric space.
\end{remark}

As an asymptotically flat manifold $(M, g)$ possesses a metric that is smooth at infinity, its scalar curvature $R[g]$ is well-defined as $|x| \to \infty$, in particular, $R[g] \in L^1(M\setminus K)$. Then, the \textit{ADM mass of $(M, g)$} (see \cite{ADMCitation,Bartnik1986}) is defined by
\begin{equation}
m(g) = \lim_{r \to \infty} \frac{1}{2(n-1)\omega_{n-1}} \int_{S_r} \sum_{i,j=1}^n (g_{ij,i}-g_{ii,j})\nu_j\,dS,
\end{equation}
where $\omega_{n-1}$ is the area of the standard unit $(n-1)$-sphere in $\RR^n$, $S_r$ is the coordinate sphere in $M \setminus K$ of radius $r$, $\nu$ is its outward unit normal, and $dS$ is the Euclidean area element on $S_r$.

\subsection{Convergence and Sobolev spaces}
Fix a complete smooth Riemannian background metric $h$ on $M$. The following discussion is independent of $h$, since on compact sets, the induced norms are equivalent for all $h$. For any continuous $(r, s)$-tensor field $\mathcal{T}$ on $M$ and any subset $A \subset M$ we define
\begin{align*}
\|\mathcal{T}\|_{\infty, A} := &  \sup \{ \big|\mathcal{T}_x(\theta^1,\ldots, \theta^r, X_1,\ldots, X_s) \big| : x \in A, \\ 
& \theta^j \in T_x^*M, \|\theta^j\|_h =1, X_i \in  T_xM, \|X_i\|_h = 1 \}.   
\end{align*}
Given $K \subset M$ a compact, it follows that a net $\{\mathcal{T}_\eps\}$ of continuous $(r, s)$-tensor fields converges to $\mathcal{T}$ with respect to $\|\cdot\|_{\infty, K}$ if and only if 
\begin{equation}
\big({\mathcal{T}_\eps}\big)^{j_1\ldots j_r}_{i_1 \ldots i_s} \to \big(\mathcal{T}\big)^{j_1\ldots j_r}_{i_1 \ldots i_s} \text{ uniformly on } U\cap K,    
\end{equation}
for any coordinate chart $(U, (x^1,\ldots,x^n))$. If $\mathcal{T}_\eps \to \mathcal{T}$ with respect to $\| \cdot \|_{\infty, K}$ for any compact $K \subset M$, we say that $\mathcal{T}_\eps \to \mathcal{T}$ locally uniformly (or in $\Cloc^0$). Analogously, if $\mathcal{T}$ and $\mathcal{T}_\eps$ are $C^k$, $k\geq1$, we say that $\mathcal{T}_\eps \to \mathcal{T}$ in $\Cloc^k$ if both $\mathcal{T}_\eps \to \mathcal{T}$ and $(\nabla^h)^i \mathcal{T}_\eps \to (\nabla^h)^i \mathcal{T}$ in $\Cloc^0$ for all $1 \leq i \leq k$, where $\nabla^h$ denotes the Levi-Civita connection of the background metric $h$. Note that $(\nabla^h)^i \mathcal{T}_\eps \to (\nabla^h)^i \mathcal{T}$ in $C^0_{\mathrm{loc}}$ is equivalent to all order $k$ partial derivatives of $\mathcal{T}_\eps$ converging locally uniformly converge to their respective counterparts of $\mathcal{T}$ for any chart, along with $\mathcal{T}_\eps \to \mathcal{T}$ in $\Cloc^0$.

For any $(r, s)$-tensor field $\mathcal{T}$ on $M$, any subset $A \subseteq M$, and $p \in [1, \infty]$, we define the $L^p$ norm of $\mathcal{T}$ on $A \subseteq M$ to be
\begin{equation}
\|\mathcal{T}\|_{L^p(A, h)}:= \left(\int_A \|\mathcal{T}\|_h^p\,d\mu_h \right)^{1/p},    
\end{equation}
while for $p=\infty$ we have the usual definition. Similarly, for nonnegative integer $k\geq0$ and $p \in [1, \infty)$, the Sobolev norm is given by
\begin{equation}
\|\mathcal{T}\|_{W^{k,p}(A, h)}:= \sum_{|\alpha| \leq k} \left(\int_A \| (\nabla^h)^\alpha \mathcal{T}\|^p_h\,d\mu_h \right)^{1/p},    
\end{equation}
where $\nabla^h$ denotes the Levi-Civita connection of the background metric $h$. A $(r, s)$-tensor field $\mathcal{T}$ is said to be in $\Lloc^pT^r_s(M)$ (resp. $\Wloc^{k,p}T^r_s(M)$) if $\|\mathcal{T}\|_{L^p(K, h)}$ (resp. $\|\mathcal{T}\|_{W^{k, p}(K, h)}$) is finite for all compact sets $K \subset M$.  In this work, the smooth metrics only differ outside of a compact set, and they are equivalent on a compact set, so $L^p$ norms, as well as $W^{k, p}$ norms, are equivalent. We usually omit the reference to the metric in the norm. Note that the $L^p$ norms are equivalent even for $C^0$ metrics, such as $g$.

In coordinate charts, a net of tensors $\{\mathcal{T}_\eps\}$, with $\mathcal{T}_\eps \in \Wloc^{k,p}T^r_s(M)$ for all $\eps>0$, converges in $\Wloc^{k,p}$ to a tensor $\mathcal{T} \in \Wloc^{k,p}T^r_s(M)$, if and only if, for any coordinate chart, the net of functions
\begin{equation}
\big({\mathcal{T}_\eps}\big)^{j_1\ldots j_r}_{i_1 \ldots i_s} \to \big(\mathcal{T}\big)^{j_1\ldots j_r}_{i_1 \ldots i_s} \text{ in the $\Wloc^{k,p}(M)$ topology}.    
\end{equation}
The previous definition is equivalent to the convergence of the components of the tensor $\mathcal{T}$ for any chosen $X_i \in \mathfrak{X}(M)$ and $\theta^j \in \Omega^1(M)$.

\subsection{Distributional curvature}\label{subsection:DistCurvature}

In this section, the general distributional framework for dealing with low-regularity metric tensors and their associated curvature quantities is presented. This framework is required for the subsequent analysis of the distributional scalar curvature and follows primarily the expositions in \cite{KOSS:22} and \cite{LeFloch2007}.

Let $M$ be an $n$-dimensional smooth manifold, and let $\mu$ be a smooth section of the density bundle of $M$. For any coordinate chart $(U, (x^1,\ldots, x^n))$, the restriction of $\mu$ to $U$ can be expressed locally as 
\begin{equation}
\restr{\mu}{U} = f |dx^1 \wedge \ldots \wedge dx^n|,     
\end{equation}
for some smooth function $f \in C^\infty(U)$. 

Let $(U_i, \psi_i)$ be an atlas of $M$, and denote by $\mu^i  |dx^1 \wedge \ldots \wedge dx^n|$ the coordinate expression of a density, where $\mu^i \in C^\infty(U_i)$. We denote by $\Vol(M)$ the volume bundle over $M$. If $\mu$ is compactly supported, i.e., $\mu \in \Gamma_c(M, \Vol(M))$, the integration of $\mu$ over $M$ is defined as
\begin{equation}
\int_M \mu = \sum_i \int_M \eta_i \mu := \sum_i \int_{\psi_i(U_i)} \eta_i \big(\psi_i^{-1}(x)\big) \mu^i \big(\psi_i^{-1}(x)\big)\,dx,    
\end{equation}
for a given subordinate partition of unity $\eta_i$. This definition is independent of the choice of atlas and partition of unity. A density $\mu$ is said to be \textit{nonnegative} (resp. \textit{positive}) \textit{density} if every $\mu^i$ is nonnegative (resp. positive).

The space of \textit{distributions on $M$} is defined as the topological dual of the space of compactly supported densities, that is,
\begin{equation}
\D^\prime(M) := \big(\Gamma_c(M, \Vol(M)) \big)^\prime.
\end{equation}
Note that any locally integrable function $f \in \Lloc^1(M)$ can be canonically associated with a distribution, i.e.,  $f \in \Lloc^1(M) \mapsto f \in \D^\prime(M)$ by the following functional
\begin{equation}
    \mu \mapsto \la f, \mu \ra:= \int_M f \mu.
\end{equation}
Similarly, the space of \textit{distributional $(r, s)$-tensor fields} is defined as
\begin{equation}
\D^\prime \mathcal{T}^r_s(M) := \D^\prime(M, T^r_sM) := \big(\Gamma_c(M, \mathcal{T}^s_r(M) \otimes \Vol(M)) \big)^\prime,
\end{equation}
where $\mathcal{T}^r_s(M)$ is the space of smooth $(r, s)$-tensor fields on $M$. By \cite[Corollary 3.1.15]{Grosser2001}, the tensor distributions can also be described by
\begin{equation}
    \D^\prime \mathcal{T}^r_s(M) \cong L\big(\Omega^1(M)^r \times \mathfrak{X}(M)^s; \D^\prime(M) \big),  
\end{equation}
where the right-hand side denotes the $C^\infty(M)$-module of multilinear maps from $\Omega^1(M)^r \times \mathfrak{X}(M)^s$ to $\D^\prime(M)$.

The space of locally integrable tensor fields $\Lloc^1 T^r_s(M)$ can be embedded into the space $\D^\prime \mathcal{T}^r_s(M)$, that is, $A \in \Lloc^1 T^r_s(M) \mapsto A \in \D^\prime \mathcal{T}^r_s(M)$ via
\begin{equation}
\mu \mapsto \la A(\theta^1,\ldots, \theta^r, X_1, \ldots, X_s), \mu \ra := \int_M A(\theta^1,\ldots, \theta^r, X_1, \ldots, X_s) \mu,    
\end{equation}
for all $\mu \in \Gamma_c(M, \Vol(M))$, where $\theta^1,\ldots, \theta^r \in \Omega^1(M)$, and  $X_1, \ldots, X_s \in \mathfrak{X}(M)$.

A useful fact is that any smooth $(r, s)$-tensor field $A$ can be uniquely extended to accept one distributional argument. For $A \in \mathcal{T}^r_s(M)$, define 
\begin{equation}
\widetilde{A}: \Omega^1(M)^{r} \times \D^\prime \mathcal{T}^1_0(M) \times \mathfrak{X}(M)^{s-1} \to \D^\prime(M), 
\end{equation}
by setting
\begin{equation}\label{eq:Extension1formDist}
\widetilde{A}(\theta^1,\ldots,\theta^r,Y,X_2,\ldots,X_s):= \la Y, A(\theta^1,\ldots,\theta^r, \cdot, X_2,\ldots,X_s) \ra \text{ in } \D^\prime(M), 
\end{equation}
for all $Y \in \D^\prime \mathcal{T}^1_0(M)$, $X_i \in \mathfrak{X}(M)$ and $\theta^j \in \Omega^1(M)$. Given these preliminaries, a notion of a connection operator in the distributional sense can be introduced.
\begin{definition}
A \textit{distributional connection} is an operator $\nabla: \mathfrak{X}(M) \times \mathfrak{X}(M) \to \D^\prime\mathcal{T}_0^1(M)$ satisfying the following linearity and Leibniz properties
\begin{enumerate}
    \item $\nabla$ is $\RR$-bilinear,
    \item $\nabla_{fX}  Y = f  \nabla_XY$,
    \item $\nabla_X fY = (Xf) Y + f \nabla_X Y$,
\end{enumerate}
for all $X, Y \in \mathfrak{X}(M)$ and $f \in C^\infty(M)$.    
\end{definition}

From the above definition, we observe that a curvature tensor cannot be associated with a general distributional connection, since this would require the multiplication of distributions, as seen in the usual coordinate expression. Furthermore, the standard definition involves second-order covariant derivatives, which are not defined in this framework. Therefore, we restrict our focus to a class of less singular connections.

A distributional connection can be extended to act on general smooth $(r, s)$-tensor fields, i.e.,  $\nabla: \mathfrak{X}(M) \times \mathcal{T}^r_s(M) \to \D^\prime \mathcal{T}^r_s(M)$. Here, we exemplify how to extend it to act on 1-forms. We introduce the map $\nabla: \mathfrak{X}(M) \times \Omega^1(M) \to \D^\prime \mathcal{T}^0_1(M)$ by
\begin{equation}
\la \nabla_X \theta, Y \ra : = X\big( \la \theta, Y \ra \big) - \la \theta, \nabla_X Y \ra \text{ in }\D^\prime(M),    
\end{equation}
for any $X, Y \in \mathfrak{X}(M)$ and any $\theta \in \Omega^1(M)$. The last term is well-defined due to \eqref{eq:Extension1formDist}.

A distributional connection $\nabla$ is called an $\Lloc^2$-connection if $\nabla_XY \in {\Lloc^2}T^1_0(M)$ for all $X, Y \in \mathfrak{X}(M)$. This notion is particularly important, since the largest class of connections allowing a stable definition of the curvature tensor in the distributional sense is the $\Lloc^2$ class (see \cite{LeFloch2007}). Based on \cite{LeFloch2007}, the curvature tensor is then defined as follows.

\begin{definition}\label{def:RiemmTensorDist}
The \textit{distributional Riemann curvature tensor} of an $\Lloc^2$-connection $\nabla$ is the map $\Riem:\mathfrak{X}(M)^3 \to \D^\prime \mathcal{T}^1_0(M)$, given by
$$
\begin{aligned}
\la\Riem(X, Y) Z, \theta\ra= & X\left\la\nabla_Y Z, \theta\right\ra-Y\left\la\nabla_X Z, \theta\right\ra \\
& -\left\la\nabla_Y Z, \nabla_X \theta\right\ra+\left\la\nabla_X Z, \nabla_Y \theta\right\ra-\left\la\nabla_{[X, Y]} Z, \theta\right\ra \text{ in }\D^{\prime}(M),
\end{aligned}
$$
for all $X, Y, Z \in \mathfrak{X}(M)$ and $\theta \in \Omega^1(M)$.  
\end{definition}
As shown in \cite{KOSS:22}, the $\Lloc^2$ connection admits a unique extension given by the operator $\nabla:\mathfrak{X}(M) \times \Lloc^2T^1_0(M) \to \D^\prime\mathcal{T}^1_0(M)$ defined by
\begin{equation}
\la \nabla_X Y, \theta \ra = X\big( \la Y, \theta \ra \big ) - \la Y, \nabla_X \theta \ra \text{ in } \D^\prime(M),    
\end{equation}
for $X \in \mathfrak{X}(M)$, $Y \in \Lloc^2\mathcal{T}^1_0(M)$, and $\theta \in \Omega^1(M)$. Note that $\la Y, \nabla_X \theta \ra$ is only defined as a distribution for connections defined on $\Lloc^2$ or a subspace of $\Lloc^2$. 

Interpreting each term in the sense of distributions, the distributional Riemann curvature tensor can also be expressed as
\begin{equation}
    \Riem(X, Y, Z)(\theta) \equiv \la\Riem(X, Y) Z, \theta\ra := \la \nabla_X \nabla_Y Z - \nabla_Y \nabla_X Z- \nabla_{[X, Y]} Z, \theta\ra,
\end{equation}
for all $X, Y, Z \in \mathfrak{X}(M)$ and $\theta \in \Omega^1(M)$.

\begin{definition}
The \textit{distributional Ricci curvature tensor} of an $\Lloc^2$ connection $\nabla$ is the map $\Ric:\mathfrak{X}(M)^2 \to \D^\prime(M)$, given by
\begin{equation}
\Ric(X, Y) := \la \Riem(X, E_i)Y), E^i \ra \in \D^\prime(M),    
\end{equation}
where $E_i$ is an arbitrary local frame in $TM$ and $E^j$ is its corresponding dual frame, for all $X, Y \in \mathfrak{X}(M)$.
\end{definition}

Given a smooth metric $g$ on $M$, the associated Levi-Civita connection $\nabla$ is torsion-free and satisfies $\nabla g = 0$. For non-smooth metrics, a distributional Levi-Civita connection can be defined, which can be associated with a distributional metric in the following sense.
\begin{definition}
    A distributional metric $g$ on $M$ is a symmetric and non-degenerate $(0, 2)$-tensor distribution on $M$, i.e., a map $g: \mathfrak{X}(M) \times \mathfrak{X}(M) \to \D^\prime(M)$ satisfying
\begin{gather*}
    g(X, Y) = g(Y, X),\\
    g(X, Y) = 0 \text{ for all } Y \implies X = 0,    
\end{gather*}
for all $X, Y \in \mathfrak{X}(M)$.
\end{definition}
In order to obtain a notion of Levi-Civita connection for a distributional metric, we must emulate the torsion-free condition and $\nabla g=0$ in the low-regularity setting. We begin with the following observation for smooth metrics: the musical isomorphism $\flat: X  \in \mathfrak{X}(M) \mapsto X^\flat \in \Omega^1(M)$ is defined by
\begin{equation}
\la X^\flat, Y\ra := g(X, Y),    
\end{equation}
for all $Y \in \mathfrak{X}(M)$. Therefore, the Koszul formula, which defines the Levi-Civita connection, takes the form
\begin{align*}
g(\nabla_X Y, Z) = \left\la(\nabla_X Y)^\flat, Z\right\ra =\frac{1}{2}&\big(  X(g(Y, Z))+Y(g(X, Z))-Z(g(X, Y)) \\
& -g(X,[Y, Z])-g(Y,[X, Z])+g(Z,[X, Y])\big).
\end{align*}
Moreover, the torsion-free and compatibility conditions can be expressed as
\begin{equation}
    \left(\nabla_X Y\right)^\flat-\left(\nabla_Y X\right)^\flat-[X, Y]^\flat=0,
\end{equation}
\begin{equation}
X(g(Y, Z))-\la\left(\nabla_X Y\right)^\flat, Z\ra-\la Y,\left(\nabla_X Z\right)^\flat\ra=0.
\end{equation}
The above observation suggests that the term $(\nabla_X Y)^\flat$ is well-defined in $\D^\prime$, and therefore makes sense for distributional metrics. This motivates the following definition.
\begin{definition}
The \textit{distributional Levi-Civita connection of a distributional metric $g$} is the operator $\nabla^\flat: (X, Y) \in \mathfrak{X}(M) \times \mathfrak{X}(M) \mapsto \nabla^\flat_X Y \in \D^\prime \mathcal{T}^0_1(M)$ defined by
\begin{align*}
\left\la\nabla_X^\flat Y, Z\right\ra:=\frac{1}{2}( & X(g(Y, Z))+Y(g(X, Z))-Z(g(X, Y)) \\
& -g(X,[Y, Z])-g(Y,[X, Z])+g(Z,[X, Y])) \text{ in } \D^\prime(M),
\end{align*}
for all $Z \in \mathfrak{X}(M)$.
\end{definition}
 Note that this is not a distributional connection in the sense previously defined, since it is of type $(0, 1)$ instead of $(1, 0)$. Moreover, the torsion and compatibility conditions hold in $\D^\prime(M)$. Therefore, when $g$ is smooth, it is clear that $\nabla^\flat_X Y = (\nabla_X Y)^\flat$ for all $X, Y \in \mathfrak{X}(M)$, and the torsion-free and compatibility conditions are recovered.

Suppose that $g$ is a metric with regularity $\Lloc^\infty(M) \cap \Wloc^{1,2}(M)$. Then the distributional Levi-Civita connection $\nabla^\flat$ is of class $\Lloc^2$. Suppose that the inverse of the metric is also $\Lloc^\infty(M)$ (or assume that $g$ locally uniformly non-degenerate \cite{LeFloch2007}), then we obtain an $\Lloc^2$-connection $\nabla$ from $\nabla^\flat$ by raising the index via $g$, i.e.,
\begin{equation}
    g\left(\nabla_X Y, Z\right):=\left(\nabla_X^\flat Y\right)(Z)  \text{ in } \D^\prime(M),
\end{equation}
for all $X, Y, Z \in \mathfrak{X}(M)$. This approach defines a $\Lloc^2$-vector field $\nabla_X Y$, hence $\nabla$ is an $\Lloc^2$-connection and the Riemann and Ricci curvature tensors associated with $g$ are well-defined as distributions.

Alternatively, we can define the Riemann curvature tensor of a metric as the distribution $\Riem: \mathfrak{X}(M)^4 \to \D^\prime(M)$ given by
\begin{align*} 
\Riem(W, Z, X, Y):= & X(g(W, \nabla_Y Z))-Y(g(W, \nabla_X Z)) \\ & -g(\nabla_X W, \nabla_Y Z)+g(\nabla_Y W, \nabla_X Z)-g(W, \nabla_{[X, Y]} Z),
\end{align*}
for all $X, Y, Z, W \in \mathfrak{X}(M)$. It is straightforward to verify that 
\begin{equation}
\Riem(W, Z, X, Y ) = g(W, \Riem(X,Y)Z) \text{ in } \D^\prime(M).    
\end{equation}
In order to define the scalar curvature, we must raise an index of the Ricci tensor using $g$. Referring to the Riemann tensor in \Cref{def:RiemmTensorDist}, note that the last three terms lie in $\Lloc^1$, while the first two are distributions. Since raising an index involves multiplication by an $\Lloc^\infty\cap W^{1,2}_\loc$-function, this operation must be understood using the following formula
\begin{equation}\label{eq:ProdRuleDist}
f X(\la \theta, Y \ra) :=  X( f \la \theta, Y \ra) - X(f) \la \theta, Y \ra, \end{equation}
for all $X \in \mathfrak{X}(M)$, $Y \in \D^\prime\mathcal{T}^1_0(M)$, $\theta \in \Omega^1(M)$ and $f \in \Lloc^\infty(M) \cap  W^{1,2}(M)$. Let $E_i$ be a local frame. Then the \textit{distributional scalar curvature},
\begin{equation}
R := \sum_{i,j=1}^n g^{ij} \Ric(E_i, E_j) \in \D^\prime(M),    
\end{equation}
is a well-defined distribution. 

We also introduce the Christoffel symbols associated with $\nabla$. Given a local frame $E_i$, the \textit{Christoffel symbols of an $\Lloc^2$-connection $\nabla$} are the $\Lloc^2$-functions $\Gamma^i_{jk}$ on $M$ defined by
$$
\nabla_{E_k}E_j = \Gamma^i_{jk} E_i,
$$
for all $i,j,k \in \{1,\ldots,n\}$. From the Koszul-formula, we obtain the following local expression for the Christoffel symbols,
\begin{equation}\label{eq:ChristLocal}
\Gamma^{k}_{ij} = \frac{1}{2} g^{kl}\left(\partial_ig_{jl}+ \partial_j g_{il} - \partial_l g_{ij} \right).
\end{equation}
The standard local formula hold in $\D^\prime(M)$:
\begin{align}
\Riem_{i j k}^m & =\partial_j \Gamma_{i k}^m-\partial_k \Gamma_{i j}^m+\Gamma_{j s}^m \Gamma_{i k}^s-\Gamma_{k s}^m \Gamma_{i j}^s, \\
\Ric_{i j} & =\Riem_{i m j}^m, \\
R & =g^{ij} \Ric_{ij}.
\end{align}
Combining the coordinate expression of the Riemann tensor with \eqref{eq:ProdRuleDist} and \eqref{eq:ChristLocal}, the scalar curvature has the following coordinate expression
\begin{align*}
R& :=\partial_m\left(g^{i j} \Gamma_{i j}^m\right)-\left(\partial_m g^{i j}\right) \Gamma_{i j}^m-\partial_j\left(g^{i j} \Gamma_{i m}^m\right)+\left(\partial_j g^{i j}\right) \Gamma_{i m}^m+g^{i j} \Gamma_{i j}^m \Gamma_{k m}^k-g^{i j} \Gamma_{i k}^m \Gamma_{j m}^k\\
& =\partial_m\left(g^{i j} \Gamma_{i j}^m - g^{i m} \Gamma_{i j}^j\right) - \left(\partial_m g^{i j}\right) \Gamma_{i j}^m+\left(\partial_j g^{i j}\right) \Gamma_{i m}^m+g^{i j} \Gamma_{i j}^m \Gamma_{k m}^k-g^{i j} \Gamma_{i k}^m \Gamma_{j m}^k,
\end{align*}
This shows that the local definitions given in \cite[(3.2),(3.3)]{Gra:20} are compatible with the global approach developed here. Furthermore, as justified in \cite{Gra:20}, all calculations can be performed locally. 

For a $C^2$ metric, the coordinate expression of the scalar curvature is usually written as
\begin{equation}\label{eq:CoordScalarC_W1n}
R = \partial_k V^k + F,    
\end{equation}
where the vector field $V$ and the scalar field $F$ are defined by
\begin{equation}\label{eq:V_defSc_W1n}
V^k:= g^{i j} \Gamma_{i j}^k - g^{i k} \Gamma_{ij}^j,    
\end{equation}
\begin{equation}\label{eq:F_defSc_W1n}
F:=  - \left(\partial_m g^{i j}\right) \Gamma_{i j}^m+\left(\partial_j g^{i j}\right) \Gamma_{i m}^m+g^{i j} \Gamma_{i j}^m \Gamma_{k m}^k-g^{i j} \Gamma_{i k}^m \Gamma_{j m}^k.   
\end{equation}
For any compactly supported density $\mu$ in a chart $(U, \psi)$, which can be expressed as $\mu = f dx^1 \wedge\ldots \wedge dx^n$ with $f \in C^\infty_c(U)$, we obtain that 
\begin{align}\label{eq:dist_scalar_W1n}
\la R, \mu\ra
& = \int_{U} -V^k \partial_k f + F f  dx.
\end{align}
Given a partition of unity, the distribution $R$ is uniquely determined by the above expression.

\begin{remark}\label{rmk:DistDef}
In the literature, one can find a distinct definition of distributional scalar curvature proposed by Lee-LeFloch \cite{LeeLeFloch2015}. The difference mainly consists in fixing a smooth background metric $h$, which coincides with the flat metric outside a compact set, and in using test-functions $u \in C^\infty_c(M)$ instead of compactly supported densities $\mu$. Their definition is independent of $h$, and the two definitions agree for $C^2$ metrics. A standard density argument then implies that they also agree for $C^0\cap W^{1,2}_\mathrm{loc}$ metrics.
\end{remark}

The following proposition shows, provided $g$ is sufficiently regular, the distributional scalar curvature can also be extended to less regular test densities. 

\begin{proposition}\label{prop:ExtendingRDist}
Let $(M^n, g)$ be a Riemannian manifold with $ g \in C^0\cap W^{1,p}_{\loc}$, $p\geq 2$. Then, the distributional scalar curvature $R[g]$ can be extended to all compactly supported test densities in $L^{(p/2)^\prime}$ whose derivatives lies in $L^{p^\prime}$, where $p^\prime = p/(p-1)$ and $(p/2)^\prime = p/(p-2)$ for $p>2$, while $(p/2)^\prime=\infty$ if $p=2$.
\end{proposition}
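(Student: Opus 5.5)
The plan is to work locally via a partition of unity and to use the coordinate formula \eqref{eq:dist_scalar_W1n}, $\la R,\mu\ra=\int_U (-V^k\partial_k f+Ff)\,dx$. The goal is to show that both terms are bounded by Hölder's inequality once $f$ has the stated integrability. Density of smooth compactly supported functions in the relevant space then yields a unique continuous extension. Concretely, let $X$ denote the space of compactly supported $f$ with $f\in L^{(p/2)'}$ and $\partial f\in L^{p'}$, normed by $\|f\|_{L^{(p/2)'}}+\|\partial f\|_{L^{p'}}$.

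First I would determine the regularity of $V$ and $F$ from \eqref{eq:V_defSc_W1n} and \eqref{eq:F_defSc_W1n}. Since $g\in C^0$ is positive definite, on a compact set $K\subset U$ the inverse $g^{ij}$ is continuous and hence bounded. Moreover $g^{ij}$ lies in $W^{1,p}(K)$, because $\partial g^{ij}=-g^{ia}g^{jb}\partial g_{ab}$, which holds for $C^0\cap W^{1,p}$ by the chain rule for Sobolev functions composed with the smooth map of matrix inversion. It follows that $\Gamma^k_{ij}$ is bounded times $L^p$, so $\Gamma^k_{ij}\in L^p(K)$. Hence $V\in L^p(K)$, while $F$, being a sum of products of two factors each in $L^p(K)$ (times bounded functions), satisfies $F\in L^{p/2}(K)$.

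Hölder's inequality then gives
\begin{equation*}
|\la R,\mu\ra|\le \|V\|_{L^p(K)}\|\partial f\|_{L^{p'}(K)}+\|F\|_{L^{p/2}(K)}\|f\|_{L^{(p/2)'}(K)}
\end{equation*}
for $\supp f\subset K$. In the case $p=2$ this reads $F\in L^1$ and $f\in L^\infty$, which is consistent with the convention $(p/2)'=\infty$. Summing over a finite partition of unity covering the support shows that $\mu\mapsto\la R,\mu\ra$ is continuous on smooth compactly supported densities with respect to the norm of $X$. Here I would use that multiplication by a smooth cutoff $\eta_i$ is bounded on $X$, which follows since $\partial(\eta_i f)=\eta_i\partial f+f\partial\eta_i$ and $L^{(p/2)'}\subset L^{p'}$ on compact sets, because $(p/2)'\ge p'$. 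I would also use that the chart transition factors are smooth.

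The step I expect to be the main obstacle is the extension itself, that is, showing that $C_c^\infty$ functions are dense in $X$ (or at least that mollification $f*\rho_\eps$ converges in the $X$-norm), so that the extension is well-defined and unique. For $p>2$ both exponents are finite and mollification converges in each norm, so this is standard. For $p=2$ the space $L^\infty$ is not approximable in norm. In that case I would instead define the extension directly by the integral formula $\int(-V^k\partial_k f+Ff)$, which makes sense by the Hölder bound. I would then check consistency: mollifiers give $\partial(f*\rho_\eps)\to\partial f$ in $L^{2}$ and $f*\rho_\eps\to f$ almost everywhere with a uniform $L^\infty$ bound, so dominated convergence gives convergence of the $F$-term. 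This shows the formula is the unique extension that is sequentially continuous in this weaker sense, and it is independent of the chart and of the partition of unity because it agrees with $R$ on smooth densities.
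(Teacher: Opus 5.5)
Your proposal is correct and is essentially the paper's proof: you show $V\in L^p_{\mathrm{loc}}$ and $F\in L^{p/2}_{\mathrm{loc}}$, then apply H\"older to the coordinate formula \eqref{eq:dist_scalar_W1n}, using $L^{(p/2)'}\subset L^{p'}$ on compact sets to absorb the lower-order term coming from smooth factors (in the paper this is the term $V^k u\,\partial_k\sqrt{|h|}$ after writing $d\mu = u\,d\mu_h$). Your extra treatment of density and uniqueness of the extension is a correct refinement of a point the paper leaves implicit, since the paper only checks that the pairing is well defined; in particular, for $p=2$ you replace norm density by bounded a.e.\ convergence together with dominated convergence.
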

\begin{proof}
    The assumption $g \in C^0 \cap W^{1,p}_\loc$ implies that $V^k \in L^p_{\loc}$ and $Q\in L^{p/2}_{\loc}$, for $p\geq 2$. Fix a smooth Riemannian background metric $h$ with a smooth density $d\mu_h$. Any compactly supported density $d\mu$ can be written as $d\mu = u d\mu_h$, where $u$ is a compactly supported function with the same regularity as the density $d\mu$. Without loss of generality, assume that $d\mu$ is compactly supported in a chart $(U, \psi)$. From \eqref{eq:dist_scalar_W1n}, we obtain that $\la R[g], u \,d\mu_h \ra$ is well-defined provided that the following is integrable
    \begin{equation}
    V^k \partial_k u \sqrt{|h|} + V^k  u \partial_k \left( \sqrt{|h|}\right) + Q u \sqrt{|h|}.
    \end{equation}
     The first term is integrable since $\frac{1}{p}+\frac{1}{p^\prime} = 1$. For the remaining two terms, if $p=2$, then $(p/2)^\prime = \infty$, so integrability follows from $u \in L^\infty$, $V \in L^2_\loc$, and $Q \in L^{1}_\loc$. If $p>2$, we have that $(p/2)^\prime = p/(p-2)>p/(p-1)=p^\prime$, then $u \in  L^{(p/2)^\prime}_\loc \subset L^{p^\prime}_\loc$. Finally, the integrability is obtained by noticing that $\frac{1}{p}+\frac{1}{p^\prime} = 1$ and $\frac{2}{p}+\frac{1}{(p/2)^\prime} = 1$ as long as $p>2$.

\end{proof}

\subsection{Regularisation}

In what follows, we introduce the regularisation of non-smooth tensor fields via chartwise convolution of their components with a mollifier. Let $(U_i, \psi_i)_{i \in \mathbb{N}}$ be a countable and locally finite family of relatively compact coordinate charts $\psi:U_i \to B_1(0)\subset \RR^n$ covering $M$, and let $(\eta_i)_{i \in \mathbb{N}}$ be a subordinate partition of unity with $\operatorname{supp}(\eta_i) \subseteq U_i$. Moreover, let $(\chi_i)_{i \in \mathbb{N}}$ be a family of cut-off functions $\chi_i \in C^\infty_c(U_i)$ such that $\chi_i \equiv 1$ on a neighbourhood of $\operatorname{supp}(\eta_i)$.  Let $\rho:\RR^n \to \RR$ be a smooth, positive mollifier supported in the unit ball with unit integral, and define $\rho_\eps(x):= \eps^{-n} \rho(x/\eps)$, where $\eps \in (0,1]$. Denoting by $f_*$ and $f^*$ the push-forward and pull-back of distributions under a diffeomorphism $f$, respectively, we define the regularisation of any $\mathcal{T} \in \D^\prime \mathcal{T}^r_s(M)$ as a smooth $(r, s)$-tensor field $\mathcal{T}\star_M \rho_\eps$ via the expression
\begin{equation}\label{eq:reg_tensor}
    \mathcal{T} \star_M \rho_\eps (x) := \sum_{i \in \mathbb{N}} \chi_i (x) (\psi_i)^* \left[ \big((\psi_i)_*(\eta_i \mathcal{T})\big) * \rho_\eps\right] (x),
\end{equation}
where $(\psi_i)_*(\eta_i\mathcal{T})$ is understood as a compactly supported distributional tensor field on $\RR^n$, and convolution with $\rho_\eps$ is component-wise, resulting in a smooth field on $\RR^n$. The cut-off functions $\chi_i$ are introduced in order to guarantee that the map $(\eps, x) \mapsto \mathcal{T} \star_M \rho_\eps$ is smooth on $(0, 1] \times M$. Note that for compact sets and sufficiently small $\eps$, all but finitely many terms vanish, and  the corresponding cut-off functions satisfy $\chi_i \equiv 1$. In such cases, the cut-off functions can be omitted. This regularisation procedure provides the following convergence properties, analogous to those of classical convolution smoothing on $\RR^n$.

\begin{proposition}[Convergence properties of $\mathcal{T} \star_M \rho_\eps$ \cite{Gra:20}]\label{prop:CongTeps}
We have:
\begin{enumerate}
    \item If $\mathcal{T} \in \D^{\prime} \mathcal{T}_s^r(M)$, then
\begin{equation}
\left\la\mathcal{T} \star_M \rho_{\eps}, \phi\right\ra \to\la\mathcal{T}, \phi\ra \quad \forall \phi \in \Gamma_c\left(M, T_r^s M \otimes \Vol(M)\right) .    
\end{equation}
\item If $\mathcal{T}$ is $C^k$, then $\mathcal{T} \star_M \rho_{\eps} \to \mathcal{T}$ in $C_{\mathrm{\mathrm{loc}}}^k$. Further, if $k \geq 1$, then for any compact $K \subseteq M$ there exists a constant $c_K>0$ and $\eps_0(K)>0$ such that
\begin{equation}
\left\|\mathcal{T}-\mathcal{T} \star_M \rho_{\eps}\right\|_{\infty, K} \leq c_K \eps,    
\end{equation}
for all $\eps<\eps_0$.
\item If $\mathcal{T} \in \Wloc^{k,p} T^r_s(M)$, $1 \leq p<\infty$ and $k\geq1$, then $\mathcal{T} \star_M \rho_{\eps} \to \mathcal{T}$ in $\Wloc^{k, p}$.
\end{enumerate}
  
\end{proposition}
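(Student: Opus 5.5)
The plan is to reduce all three statements to the classical properties of convolution on $\RR^n$. This reduction is possible because, on a fixed compact set $K \subseteq M$, the regularisation \eqref{eq:reg_tensor} is a finite sum of localised Euclidean convolutions.

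Fix $K$. By local finiteness, only finitely many charts $U_{i_1},\ldots,U_{i_N}$ meet a neighbourhood of $K$. Choose $\eps_0(K)$ smaller than the distance, in each chart, from $\psi_i(\operatorname{supp}\eta_i)$ to the boundary of the region where $\chi_i \equiv 1$. For $\eps<\eps_0$ the support of $\big((\psi_i)_*(\eta_i\mathcal{T})\big)*\rho_\eps$ then lies in $\psi_i(\{\chi_i\equiv 1\})$. Consequently, on $K$,
\[
\mathcal{T}\star_M\rho_\eps=\sum_{k=1}^N(\psi_{i_k})^*\big[((\psi_{i_k})_*(\eta_{i_k}\mathcal{T}))*\rho_\eps\big],
\qquad
\mathcal{T}=\sum_{k=1}^N(\psi_{i_k})^*(\psi_{i_k})_*(\eta_{i_k}\mathcal{T}),
\]
so that $\mathcal{T}-\mathcal{T}\star_M\rho_\eps$ on $K$ is a finite sum of pullbacks of $T_i-T_i*\rho_\eps$, with $T_i:=(\psi_i)_*(\eta_i\mathcal{T})$ compactly supported in $B_1(0)$. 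Each such component has the regularity of $\mathcal{T}$, since $\eta_i$ is smooth and $\psi_i$ is a diffeomorphism. Pullback by a fixed diffeomorphism is continuous in $C^k$ and $W^{k,p}$ on compact sets, with constants depending only on $K$ and the chosen charts. It therefore suffices to prove each claim componentwise for compactly supported $T_i$ on $\RR^n$.

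\textbf{Item 1.} Take $\phi$ with compact support; we may assume $\operatorname{supp}\phi\subseteq K$. Using the identity above, $\la \mathcal{T}\star_M\rho_\eps,\phi\ra=\sum_k\la T_{i_k}*\rho_\eps,(\psi_{i_k})_*\phi\ra$. Since $\chi_i$ equals $1$ there, this can be rewritten as $\la T_{i_k},\check\rho_\eps*(\psi_{i_k})_*\phi\ra$, where $\check\rho_\eps(x)=\rho_\eps(-x)$. The test functions $\check\rho_\eps*(\psi_{i_k})_*\phi$ converge to $(\psi_{i_k})_*\phi$ in $C^\infty_c$ with supports in a fixed compact set. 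Continuity of the distribution $T_{i_k}$ then gives convergence to $\la T_{i_k},(\psi_{i_k})_*\phi\ra$, and summing over $k$ yields $\la\mathcal{T},\phi\ra$.

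\textbf{Items 2 and 3.} These follow from $\partial^\alpha(T*\rho_\eps)=(\partial^\alpha T)*\rho_\eps$ together with the standard facts that convolution with $\rho_\eps$ converges uniformly for uniformly continuous functions and in $L^p$ for $L^p$ functions, $p<\infty$. For the rate in item 2 with $k\geq1$, apply the mean value theorem:
\[
|T(x)-T*\rho_\eps(x)|\le\int\rho_\eps(y)\,|T(x)-T(x-y)|\,dy\le\eps\,\|\partial T\|_\infty .
\]
Since $\|\partial T_i\|_\infty$ is finite by compact support, this gives the bound $c_K\eps$ after pulling back.

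The only delicate point is the bookkeeping in the first step: one must ensure that the cut-offs $\chi_i$ and the supports of the convolutions interact correctly for small $\eps$, so that the identity holds exactly on $K$. Once that identity is established, everything reduces to textbook Euclidean facts.
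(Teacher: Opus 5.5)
The paper gives no proof of its own and defers to \cite{Gra:20}; your proposal is correct and is the standard argument behind that citation. For $\eps<\eps_0(K)$ only finitely many charts meet $K$ and $\chi_i\equiv 1$ on the supports of the convolved terms, so all three items reduce to Euclidean convolution facts. One small fix in item~1: pair with $(\psi_{i_k})_*(\chi_{i_k}\phi)$ rather than $(\psi_{i_k})_*\phi$, since $\phi$ need not be compactly supported in $U_{i_k}$; this changes nothing, because $\chi_{i_k}\equiv 1$ on $\operatorname{supp}\big((\psi_{i_k})^*[T_{i_k}*\rho_\eps]\big)$ and near $\operatorname{supp}\eta_{i_k}$.
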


Let $g$ be a $C^0$-Riemannian metric, i.e., a symmetric, non-degenerate $(0, 2)$-tensor field. We now introduce $g \star_M \rho_\eps$ to obtain a net of smooth Riemannian metrics that converges to $g$ in $C^0_{\mathrm{loc}}$. Using the general formula for the inverse of $g$, and \Cref{prop:CongTeps}, and given that $g$ is $C^0$, we have $(g \star_M \rho_\eps)^{-1} \to g^{-1}$ in $\Cloc^0$ as well. As discussed in the previous section, if the metric $g$ additionally belongs to $\Wloc^{1,2}(M)$, then $g$ admits an $\Lloc^2$ Levi-Civita connection $\nabla$, and the Riemann tensor, Ricci tensor, and scalar curvature are properly defined. From \cite[Theorem 4.6]{LeFloch2007} and the convergence of the inverse metrics, we obtain a so-called \textit{stability result}, i.e., the convergence of curvature quantities under regularisation.

\begin{proposition}[\cite{LeFloch2007}]
Let $(M, g)$ be a Riemannian manifold with $g \in C^0\cap \Wloc^{1,2}$. Then 
\begin{equation}
R[g \star_M \rho_\eps] \to R[g]  \text{ in } \D^\prime(M).    
\end{equation}
In particular, the Levi-Civita connection of $g\,\star_M \rho_\eps$, as well as $\Riem[g\,\star_M \rho_\eps]$ and $\Ric[g\,\star_M \rho_\eps]$, converge distributionally in an analogous manner.
\end{proposition}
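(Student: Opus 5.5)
The plan is to reduce the statement to local computations in charts and apply the stability of the Levi-Civita connection under regularisation. Write $g_\eps := g\star_M \rho_\eps$. By \Cref{prop:CongTeps}, $g_\eps \to g$ in $\Cloc^0$ and $g_\eps \to g$ in $\Wloc^{1,2}$. Since $g$ is continuous and nondegenerate, on any compact $K$ the eigenvalues of $g_\eps$ are bounded above and below uniformly in $\eps$ for $\eps$ small. Consequently $g_\eps^{-1}\to g^{-1}$ uniformly on $K$, with $\|g_\eps^{-1}\|_{\infty,K}$ uniformly bounded. Moreover, $\partial g_\eps^{-1} = -g_\eps^{-1}(\partial g_\eps) g_\eps^{-1}$ converges to $\partial g^{-1}$ in $\Lloc^2$, because each factor converges either uniformly with uniform bounds or in $\Lloc^2$.

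Next I treat the Christoffel symbols. Fix a chart $(U,\psi)$ and a test density $\mu = f\,dx^1\wedge\cdots\wedge dx^n$ with $f\in C^\infty_c(U)$; by a partition of unity it suffices to treat such $\mu$. From \eqref{eq:ChristLocal}, $\Gamma[g_\eps]$ is a product of $g_\eps^{-1}$, which converges uniformly, and $\partial g_\eps$, which converges in $L^2$. Hence $\Gamma[g_\eps]\to\Gamma[g]$ in $L^2(\supp f)$. For the same reason $V_\eps\to V$ in $L^2$ and $F_\eps\to F$ in $L^1$ on $\supp f$, where $V,F$ are given by \eqref{eq:V_defSc_W1n} and \eqref{eq:F_defSc_W1n}. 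Indeed, $F$ is a sum of products of two $L^2$-convergent factors (such as $\partial g^{-1}$ and $\Gamma$, or $\Gamma$ and $\Gamma$), possibly times uniformly convergent $g^{-1}$, and the product of two $L^2$-convergent sequences converges in $L^1$.

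Then, by \eqref{eq:dist_scalar_W1n}, which also holds for the smooth $g_\eps$ since the distributional and classical definitions agree for $C^2$ metrics,
\[
\la R[g_\eps]-R[g],\mu\ra = \int_U -(V_\eps^k - V^k)\partial_k f + (F_\eps - F) f\,dx,
\]
and this is bounded by $\|V_\eps-V\|_{L^2}\|\partial f\|_{L^2}+\|F_\eps-F\|_{L^1}\|f\|_\infty\to 0$. The same argument applied to the coordinate formula for $\Riem^m_{ijk}$ gives the analogous convergence for $\Riem$. The terms $\partial_j\Gamma$ are paired against test functions after integrating by parts, and the $\Gamma\Gamma$ terms converge in $L^1$. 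Raising indices with $g_\eps^{-1}$ is handled in the same way.

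The only genuinely delicate step is the uniform control of $g_\eps^{-1}$, that is, uniform ellipticity for small $\eps$. It follows from locally uniform convergence of $g_\eps$ to a continuous positive definite $g$ on compact sets, where the chartwise cut-offs $\chi_i$ equal $1$ for small $\eps$. Everything else is Hölder's inequality.
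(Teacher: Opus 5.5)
Your proof is correct and takes essentially the paper's route. The paper gives no argument of its own: it cites the stability theorem of LeFloch--Mardare (\cite[Theorem 4.6]{LeFloch2007}) together with the locally uniform convergence $(g\star_M\rho_\eps)^{-1}\to g^{-1}$, and your chartwise computation ($\Gamma[g_\eps]\to\Gamma[g]$ and $V_\eps\to V$ in $\Lloc^2$, $F_\eps\to F$ in $\Lloc^1$, then pairing via \eqref{eq:dist_scalar_W1n}) is exactly the H\"older-type argument behind that cited result, written out explicitly.
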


In order to discuss bounds on scalar curvature, we first need to define a notion of positivity for distributions. Distributions on $M$ admit a natural definition of positivity, given as follows.
\begin{definition}[Positivity of Distributions]\label{def:PosDistributions}
    Let $u \in \D^\prime(M)$. We say that $u \geq 0$ (resp. $u > 0$) if holds that $\la u, \mu \ra\geq 0$ (resp. $\la u, \mu \ra > 0$) for all compactly supported nonnegative (resp. positive) volume densities $\mu$ on $M$. Moreover, for any $u, v \in \D^\prime(M)$, we define $u \geq v$ (resp. $u<v$) if the distribution $u-v$ is nonnegative (resp. positive) in the above sense.
\end{definition}
Clearly, for a distribution $u \in \D^\prime(M)$ we have 
\begin{equation}
u\geq0 \implies u \star_M \rho_\eps\geq 0.
\end{equation}

In the following, a convenient family of regularised metrics $g_\eps$ is introduced, which coincides with the original metric outside a compact. As a consequence, scalar curvature remains unchanged outside a compact set, and the ADM mass is also preserved.

\begin{lemma}[\cite{grant2014positive}] \label{lemma:FamilyOfMetrics}
    Let $g \in C^0 \cap \Wloc^{k,p}$, with $k, p\geq1$, and let $K\subset M$ be a compact set such that $g$ is smooth on $M \setminus K$. Then, for all $\eps>0$, there exists a smooth Riemannian metric $g_\eps$ and a compact set $K_0 \subset M$ containing $K$ with the following properties:
    \begin{enumerate}
        \item $g_\eps$ converges to $g$ in $\Wloc^{k,p}(M)$ and locally uniformly as $\eps \to 0$;
        \item $g_\eps \equiv g$ on $M\backslash K_0$.
    \end{enumerate}
\end{lemma}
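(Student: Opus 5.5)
The plan is to glue the chartwise regularisation $g\star_M\rho_\delta$ to $g$ itself with a cut-off function, using that $g$ is already smooth near infinity. Fix an open, relatively compact neighbourhood $U$ of $K$ and a compact set $K_0\supset\overline{U}$. Choose $\chi\in C^\infty_c(M)$ with $0\le\chi\le1$, $\chi\equiv1$ on a neighbourhood of $K$, and $\supp\chi\subset U$. For a parameter $\delta=\delta(\eps)\to0$, to be chosen below, define
\begin{equation*}
g_\eps:=\chi\,(g\star_M\rho_\delta)+(1-\chi)\,g .
\end{equation*}
This field is smooth. On a neighbourhood of $K$ it equals the smooth tensor $g\star_M\rho_\delta$. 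Away from $K$, both $g$ and $g\star_M\rho_\delta$ are smooth and $\chi$ is smooth. Moreover, $g_\eps\equiv g$ on $M\setminus\supp\chi\supseteq M\setminus K_0$, which is property 2.

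Next I check that $g_\eps$ is a Riemannian metric. It is symmetric, and it is a convex combination of $g$ and $g\star_M\rho_\delta$ at each point. By \Cref{prop:CongTeps} (convergence in $\Cloc^0$ for continuous $g$), $g\star_M\rho_\delta\to g$ uniformly on the compact set $K_0$. Since $g$ is positive definite and continuous, it is bounded below by $c\,h$ on $K_0$ for some $c>0$. Hence for $\delta$ small, $g\star_M\rho_\delta\ge \tfrac c2 h$ on $K_0$, and so $g_\eps$ is positive definite everywhere. I choose $\delta(\eps)\le\eps$ small enough for this, which is possible because $K_0$ is fixed.

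For property 1, note that $g_\eps-g=\chi\,(g\star_M\rho_\delta-g)$. Uniform convergence on compact sets is then immediate from $|\chi|\le1$. For the Sobolev convergence, expand $\nabla^{h,\alpha}(\chi T)$ with $|\alpha|\le k$ by the Leibniz rule. Each term is a derivative of $\chi$, which is bounded on the compact set $\supp\chi$, times $\nabla^{h,\beta}(g\star_M\rho_\delta-g)$ with $|\beta|\le k$. Each of these tends to zero in $L^p(\supp\chi)$ by \Cref{prop:CongTeps}(3). Hence $\|g_\eps-g\|_{W^{k,p}(A)}\to0$ for every compact $A$.

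There is no real obstacle. The only points to watch are the uniform positivity on $K_0$, which is why $\delta(\eps)$ is taken small, and the requirement $p<\infty$ in \Cref{prop:CongTeps}(3), which I assume as part of the hypothesis. The one subtlety is that $g\star_M\rho_\delta$ uses cut-offs $\chi_i$. That causes no problem, because it is smooth on all of $M$ and converges on compact sets.
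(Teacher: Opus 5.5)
Your proof is correct and follows essentially the same route as the paper: mollify chartwise only near $K$, leave $g$ untouched outside a fixed compact set $K_0$, and obtain both convergences from \Cref{prop:CongTeps}. The only difference is cosmetic: the paper builds the localisation into a partition of unity subordinate to $\{M\setminus K, U_1,\dots,U_m\}$ and mollifies only the pieces $\eta_i g$, whereas you mollify globally and blend with a single cut-off $\chi$. Your explicit check of positive definiteness via a small $\delta(\eps)$, and your remark that $p<\infty$ is needed, make precise two points the paper leaves implicit.
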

\begin{proof}
    For completeness, we present the result of \cite{grant2014positive} in our setting. The construction of the regularised metric closely follows the procedure for smoothing tensor fields discussed earlier, but the choice of open cover of $M$ requires particular care. Let $K \subset M$ be a compact set such that $g$ is smooth on $M \setminus K$, and denote its complement by $\mathcal{C}:= M \setminus K$. We cover $K$ by a finite collection of coordinate charts $\psi_i: U_i \to B_1(0)\subset \RR^n$, where $i=1,\ldots, m$. Consequently, we obtain an open cover of $M$ given by $M = \mathcal{C} \cup \left(\cup^m_{i=1} U_i\right)$. 
    
    Let $\eta_i$, $i=1,\ldots, m$, and $\eta_{\mathcal{C}}$ be a partition of unity subordinate to the open cover $\mathcal{C} \cup \left(\cup^m_{i=1} U_i\right)$ of $M$ with the property that $\supp(\eta_i \circ \psi_i^{-1}) \subset B_1(0)$ and that $\eta_\mathcal{C}$ has support bounded away from $\partial \mathcal{C}$. Let $\rho_\eps$ be a standard mollifier, and let $\{\chi_1, \ldots, \chi_m\}$ be a family of cut-off functions $\chi_i \in C^\infty_c(U_i)$, with $\chi_i \equiv 1$ on a neighbourhood of $\operatorname{supp}(\eta_i)$. We define the smooth metric $g_\eps$ via the expression
    \begin{equation}
    g_\eps := \eta_\mathcal{C} g + \sum_{i=1}^m  \chi_i (\psi_i)^* \left[ \big((\psi_i)_*(\eta_i g)\big) * \rho_\eps\right].
    \end{equation}
    By \Cref{prop:CongTeps}, we have that $g_\eps \to g$ in $\Wloc^{k,p}$ and locally uniformly. Note that the metrics $g$ and $g_\eps$ coincide outside a compact set $K_0$ containing $K$, defined as the union of the supports of all the $\chi_i$.
\end{proof}

From now on, we denote by $g_\eps$ the family of smooth metric obtained by \Cref{lemma:FamilyOfMetrics}, and in the following, we show some useful properties of $g_\eps$ for the subsequent sections.
\begin{lemma}[\cite{grant2014positive}]
    Let $g$ be a continuous metric on $M$. Then, there exists $\rho(\eps)\geq 1$ such that
    \begin{equation}
        \frac{1}{\rho(\eps)} g_\eps \leq g \leq \rho(\eps) g_\eps
    \end{equation}
    as bilinear forms on $M$, with $\rho(\eps) \to 1$ as $\eps \to 0$.
\label{lemma:bilinearmetric}
\end{lemma}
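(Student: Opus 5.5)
We compare $g$ and $g_\eps$ pointwise as symmetric bilinear forms, using the locally uniform convergence $g_\eps \to g$ from \Cref{lemma:FamilyOfMetrics} together with the fact that the two metrics agree outside a compact set. The key observation is that $g_\eps \equiv g$ on $M\setminus K_0$. There both inequalities hold trivially with any $\rho(\eps)\geq 1$, so it suffices to work on the compact set $K_0$. One point needs checking first: $K_0$ is the union of the supports of the cut-off functions $\chi_i$, and these can be fixed independently of $\eps$. Hence $K_0$ is a single compact set for all $\eps$.

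On $K_0$ we use the background metric $h$. By continuity of $g$ and compactness of $K_0$, there is $\lambda>0$ with $g_x(v,v)\geq \lambda\, h_x(v,v)$ for all $x\in K_0$ and $v\in T_xM$. Define
\[
\delta(\eps):=\|g_\eps-g\|_{\infty,K_0}.
\]
By \Cref{lemma:FamilyOfMetrics} (via \Cref{prop:CongTeps}), $\delta(\eps)\to 0$ as $\eps \to 0$. For $v\in T_xM$ with $x\in K_0$ this gives
\[
|g_\eps(v,v)-g(v,v)|\leq \delta(\eps)\, h(v,v)\leq \frac{\delta(\eps)}{\lambda}\, g(v,v).
\]

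Rearranging, we get two bounds:
\[
g_\eps \leq \Bigl(1+\tfrac{\delta}{\lambda}\Bigr) g \qquad\text{and}\qquad g_\eps\geq \Bigl(1-\tfrac{\delta}{\lambda}\Bigr)g.
\]
The second is only useful once $\eps$ is small enough that $\delta(\eps)<\lambda/2$. For such $\eps$, set
\[
\rho(\eps):=\max\Bigl\{1+\tfrac{\delta}{\lambda},\ \bigl(1-\tfrac{\delta}{\lambda}\bigr)^{-1}\Bigr\}.
\]
Then $\rho(\eps)\geq 1$ and both desired inequalities hold. Moreover $\rho(\eps)\to 1$, since $\delta(\eps)\to 0$.

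For the finitely many larger $\eps$ (really, $\eps$ in a compact range bounded away from $0$), the bound is still finite. Each $g_\eps$ is a smooth Riemannian metric, hence uniformly positive definite on $K_0$, so some finite $\rho(\eps)$ works. This does not affect the limit. Alternatively, since only $\eps\to 0$ matters, we may restrict to $\eps\in(0,\eps_0]$.

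There is no serious obstacle in this argument. The only point needing care is that the compact set $K_0$ on which $g_\eps$ and $g$ differ does not depend on $\eps$, so that the uniform convergence from \Cref{lemma:FamilyOfMetrics} is applied on a fixed compact set.
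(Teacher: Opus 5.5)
Your proof is correct and follows the same route as the paper: $g_\eps \equiv g$ outside the fixed compact set $K_0$, and $g_\eps\to g$ uniformly on $K_0$. You make the paper's terse argument explicit by choosing $\rho(\eps)=\max\{1+\delta(\eps)/\lambda,\,(1-\delta(\eps)/\lambda)^{-1}\}$ and by checking that $K_0$ does not depend on $\eps$.
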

\begin{proof}
    For completeness, we restate the proof of \cite{grant2014positive} here. By construction, $g_\eps$ converges uniformly to $g$ on compact subsets. Considering the compact set $K_0$, it follows that there exists $\rho(\eps)$ such that the inequality holds on $K_0$. Since $g_\eps$ coincides with $g$ on $M \setminus K_0$, it follows that the inequality holds globally on $M$ for each $\eps>0$. The uniform convergence of the metrics on $K$ imply that $\rho(\eps) \to 1$ as $\eps \to 0$.
\end{proof}
Schoen and Yau \cite[Lemma 3.1]{PMTSchoenYau} showed that for any smooth asymptotically flat metric $h$, there exists a constant $C>0$ such that
\begin{equation}\label{eq:SobolevConstant0}
\|\varphi\|_{L^{n^*}(M, h)}^2 \leq C \|\nabla^h \varphi \|^2_{L^2(M, h)},
\end{equation}
where $n^*:= \frac{2n}{n-2}$, and $\varphi$ is any function with compact support on $M$.

The smallest constant satisfying inequality \eqref{eq:SobolevConstant0} is called the \textit{Sobolev constant} $C(h)$ of the metric $h$. Following the argument proposed in \cite[Lemma 3.1]{PMTSchoenYau}, we conclude that the inequality remains valid even for asymptotically flat $C^0$ metrics, i.e., for any function $\varphi$ with compact support on $M$, we have that
\begin{equation}\label{eq:SobolevConstant}
\|\varphi\|_{L^{n^*}(M, g)}^2 \leq C(g) \|\nabla^g \varphi \|^2_{L^2(M, g)},    
\end{equation}
where $C(g)>0$ is the Sobolev constant of the metric $g$.
\begin{proposition}[\cite{grant2014positive}]\label{prop:SobolevConstant}
Let $g$ be a complete asymptotically flat $C^0$ metric on $M^n$. Then the Sobolev constants $C(g)$ and $C(g_{\eps})$ of the metrics $g$ and $g_\eps$, respectively, satisfy the inequality
\begin{equation}
\frac{1}{\rho(\eps)^n} C(g_{\eps}) \leq C(g) \leq \rho(\eps)^n C(g_\eps),    
\end{equation}
where $\rho$ is the function from \Cref{lemma:bilinearmetric}.
\end{proposition}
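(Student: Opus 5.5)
The idea is to compare the two sides of the Sobolev inequality term by term, using the pointwise comparison of \Cref{lemma:bilinearmetric}. By symmetry of that lemma in $g$ and $g_\eps$, it suffices to prove $C(g) \leq \rho(\eps)^n C(g_\eps)$; the other inequality follows by exchanging the roles of $g$ and $g_\eps$. For the inequality itself, fix a compactly supported $\varphi$ (Lipschitz, or $C^\infty_c$, which suffices by density). We then bound $\|\varphi\|_{L^{n^*}(M,g)}^2$ from above by a multiple of $\|\varphi\|_{L^{n^*}(M,g_\eps)}^2$, and $\|\nabla^{g_\eps}\varphi\|^2_{L^2(M,g_\eps)}$ from above by a multiple of $\|\nabla^{g}\varphi\|^2_{L^2(M,g)}$. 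Chaining these with \eqref{eq:SobolevConstant0} for $g_\eps$ gives an admissible constant for $g$, hence a bound on the minimal constant $C(g)$.

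The first ingredient is the pointwise comparison. Write $\rho=\rho(\eps)$. From $\rho^{-1} g_\eps \leq g \leq \rho g_\eps$ as bilinear forms, the eigenvalues of $g$ relative to $g_\eps$ lie in $[\rho^{-1},\rho]$. This yields two pointwise facts.
\begin{itemize}
\item[(i)] The volume densities satisfy $\rho^{-n/2} d\mu_{g_\eps} \leq d\mu_g \leq \rho^{n/2} d\mu_{g_\eps}$.
\item[(ii)] Since inverting reverses the inequalities, $\rho^{-1} g_\eps^{-1} \leq g^{-1} \leq \rho\, g_\eps^{-1}$ on covectors. Hence $|d\varphi|^2_{g_\eps} \leq \rho |d\varphi|^2_g$.
\end{itemize}
Only continuity of $g$ is used here, so no regularity issue arises.

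The second ingredient is the integration, tracking exponents. For the gradient term,
\[
\int |\nabla^{g_\eps}\varphi|^2_{g_\eps}\, d\mu_{g_\eps} \leq \rho \cdot \rho^{n/2} \int |\nabla^g \varphi|^2_g\, d\mu_g .
\]
For the $L^{n^*}$ term,
\[
\Big(\int |\varphi|^{n^*} d\mu_g\Big)^{2/n^*} \leq \rho^{\frac{n}{2}\cdot\frac{2}{n^*}} \Big(\int|\varphi|^{n^*} d\mu_{g_\eps}\Big)^{2/n^*} = \rho^{\frac{n-2}{2}}\|\varphi\|^2_{L^{n^*}(g_\eps)} .
\]
Combining the two with the Sobolev inequality for $g_\eps$ gives
\[
\|\varphi\|^2_{L^{n^*}(g)} \leq \rho^{\frac{n-2}{2}} C(g_\eps)\, \rho^{1+\frac n2}\|\nabla^g\varphi\|^2_{L^2(g)} = \rho^{n} C(g_\eps)\|\nabla^g\varphi\|^2_{L^2(g)}.
\]
Since $\varphi$ is arbitrary and $C(g)$ is the smallest admissible constant, this gives $C(g)\leq\rho^nC(g_\eps)$, and the exponent $n$ matches the statement exactly.

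The main point needing care is that $C(g)$ and $C(g_\eps)$ are finite, so that these minimal constants make sense. For $g_\eps$ this holds because $g_\eps$ is smooth and asymptotically flat, being equal to $g$ outside $K_0$, so Schoen–Yau applies. Given finiteness of $C(g_\eps)$, the inequality just derived already shows $C(g)<\infty$, consistent with \eqref{eq:SobolevConstant}. One should also check that the gradient $\nabla^g\varphi$ is meaningful for merely continuous $g$. It is, since it only involves $g^{-1}$ and $d\varphi$, with no Christoffel symbols.
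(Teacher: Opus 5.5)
Your proof is correct and is essentially the paper's argument. The paper chains the same two pointwise comparisons with the Sobolev inequality: the volume factor $\rho(\eps)^{n/2}$ becomes $\rho(\eps)^{\frac{n-2}{2}}$ in the $L^{n^*}$ norm, and the Dirichlet energy picks up $\rho(\eps)^{1+\frac{n}{2}}$. The only difference is the order: the paper first proves $C(g_\eps)\le\rho(\eps)^nC(g)$ and reverses roles for the other bound, the mirror image of your ordering. Your remark that finiteness of $C(g)$ follows from that of $C(g_\eps)$ is a small bonus, since the paper instead asserts the Sobolev inequality for $C^0$ metrics directly via the Schoen--Yau argument.
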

\begin{proof}
Once again, for completeness, we present the proof from \cite{grant2014positive} here. Note that for any test function $\varphi$, by \Cref{lemma:bilinearmetric} and \eqref{eq:SobolevConstant}, we have
\begin{align*}
\|\varphi\|_{L^{n^*}(M, g_\eps)}^{2} & =  \left(\int_M |\varphi|^{n^*} \,\dgeps \right)^{2/n^*} \\
&\leq  \left(\int_M |\varphi|^{n^*} \rho(\eps)^{\frac{n}{2}}\,\dg \right)^{2/n^*} \\
& \leq \rho(\eps)^{\frac{n-2}{2}} C(g) \left(\int_M  |\nabla^g \varphi|^{2} \,\dg \right) \\
& \leq \rho(\eps)^{\frac{n-2}{2}} C(g) \left(\int_M  \rho(\eps)^{1 + n/2} |\nabla^{g_\eps} \varphi|^{2} \,\dgeps \right) \\
& \leq \rho(\eps)^{n} C(g) \|\nabla^{g_\eps} \varphi\|_{L^2(M, g_\eps)}^2,
\end{align*}
which implies that $C(g_\eps) \leq \rho(\eps)^{n} C(g)$. By reversing the argument, we establish the desired result.
\end{proof}

\section{Approximations and scalar curvature}\label{section:ScalarCurvature}

\subsection{Friedrichs' lemma}

Before delving into the behaviour of the scalar curvature for regularised metrics, we establish an adaptation of Friedrichs' Lemma, extending it to Sobolev spaces (see \Cref{prop:FriedrichsWp}). The proof presented here follows \cite[Lemma 4.8]{Gra:20} and \cite[Lemma 17.1.5]{Hormander2007-ch}, and, although it is essentially the same argument as in \cite[Lemma 3.3]{Calisti2025}, it does not require the Lipschitz property and is presented differently\footnote{This result was obtained in parallel with \cite{Calisti2025}, however, during the preparation of this draft, it was found that the Sobolev version of Friedrichs' lemma was also established in \cite{mondino2025}.}.

In this section, we work on $\RR^n$, which is sufficient for extending the results to manifolds, as discussed at the end of \Cref{subsection:DistCurvature}. We denote by $\partial_j$ the partial derivative $\frac{\partial}{\partial x^j}$, where $j=1,\ldots,n$, and by $\rho_\eps$ the standard mollifier, i.e., $\rho_\eps(x) =\rho(\frac{x}{\eps})/{\eps^n}$, where $\eps \in (0,1]$. For any $p \in [1, +\infty)$, we define the \textit{conjugate exponent} $p^\prime$ of $p$ by $\frac{1}{p}+\frac{1}{p^\prime}=1$, with the usual conventions for $p=1$ and $p=+\infty$.

In preparation for what follows, and inspired by \cite[Lemma 4.7]{Gra:20}, we show that if $f$ is merely integrable in $L^p$, $p \in [1, \infty)$, i.e., without weak derivatives, then $\partial_j(f * \rho_\eps)$ generally diverges as $\eps \to 0$, though at a controlled rate. 

\begin{lemma}{\label{lemma:AuxFriedrichswp}}
Let $f \in \Lloc^{p}(\RR^n)$, with $p \in [1,\infty)$, and let $\rho_\eps$ be a standard mollifier. Then, for any compact set $K \subset \RR^n$, $\eps \|\partial_j(f * \rho_\eps)\|_{L^{p}(K)} \to 0$ as $\eps \to 0$. 
\end{lemma}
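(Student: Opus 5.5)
The plan is a density argument: first a uniform bound, then an approximation by smooth functions, for which the statement is trivial. Write $\partial_j(f*\rho_\eps) = f * \partial_j\rho_\eps$, and note that $\partial_j\rho_\eps(x) = \eps^{-n-1}(\partial_j\rho)(x/\eps)$. Hence
\begin{equation*}
\|\partial_j\rho_\eps\|_{L^1(\RR^n)} = \eps^{-1}\|\partial_j\rho\|_{L^1(\RR^n)} =: \eps^{-1}C_\rho .
\end{equation*}
Fix the compact $K$ and let $K_1 := \{x : \operatorname{dist}(x,K)\leq 1\}$, which is compact. Since $\eps\leq 1$ and $\supp\rho_\eps\subset B_\eps(0)$, the values of $f*\partial_j\rho_\eps$ on $K$ depend only on $f|_{K_1}$. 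Young's convolution inequality, applied to $f\mathbf{1}_{K_1}$, then gives the uniform bound
\begin{equation*}
\eps\,\|\partial_j(f*\rho_\eps)\|_{L^p(K)} \leq C_\rho\,\|f\|_{L^p(K_1)}.
\end{equation*}
This bound is the key estimate; it is linear in $f$ and independent of $\eps$.

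Next approximate. Since $p<\infty$, $C_c^\infty(\RR^n)$ is dense in $L^p(K_1)$. Given $\delta>0$, choose $\varphi\in C_c^\infty(\RR^n)$ with $\|f-\varphi\|_{L^p(K_1)}<\delta$. Splitting $f=(f-\varphi)+\varphi$ gives
\begin{equation*}
\eps\|\partial_j(f*\rho_\eps)\|_{L^p(K)} \leq C_\rho\,\delta + \eps\,\|(\partial_j\varphi)*\rho_\eps\|_{L^p(K)} \leq C_\rho\,\delta + \eps\,|K|^{1/p}\,\|\partial_j\varphi\|_{L^\infty}.
\end{equation*}
Here we used that derivatives commute with convolution for smooth $\varphi$, and that $\|(\partial_j\varphi)*\rho_\eps\|_{L^\infty}\leq \|\partial_j\varphi\|_{L^\infty}$ because $\rho_\eps$ has unit mass. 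Taking $\limsup_{\eps\to0}$ gives $\limsup_{\eps\to0}\eps\|\partial_j(f*\rho_\eps)\|_{L^p(K)}\leq C_\rho\delta$. Since $\delta>0$ is arbitrary, the limit is $0$.

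There is no genuine obstacle. The only points requiring care are localisation and the range of $p$. Localisation means replacing $f$ by $f\mathbf{1}_{K_1}$ so that Young's inequality applies to an $L^p(\RR^n)$ function. The restriction $p<\infty$ is exactly where density of smooth functions is needed; for $p=\infty$ the claim fails, as a jump function shows. For $p=1$ the same argument works verbatim.
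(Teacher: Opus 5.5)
Your proof is correct and follows essentially the paper's argument: a uniform bound $\eps\|\partial_j(f*\rho_\eps)\|_{L^p(K)}\le \|\partial_j\rho\|_{L^1(B_1)}\|f\|_{L^p(K_1)}$ combined with density. The paper obtains that bound via H\"older/Jensen where you use Young's inequality, which amounts to the same estimate. The only real difference is that you approximate by $C_c^\infty$ functions, which makes the approximant case trivial, whereas the paper approximates by $C^0_c$ functions and invokes \cite[Lemma 4.7]{Gra:20} for continuous $f$.
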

\begin{proof}
By \cite[Lemma 4.7]{Gra:20}, if $f$ is continuous, then $\eps \|\partial_j(f * \rho_\eps)\|_{K,\infty} \to 0$ as $\eps \to 0$, for any compact set $K \subset \RR^n$. We extend this result for $p$-integrable functions by a density argument. Let $K\subset \RR^n$ be a compact set. Since $p<\infty$, there exists a sequence $(f_n)_{n \in \mathbb{N}}$ of functions in $C^0_c(\RR^n)$ such that $f_n \to f$ in $\Lloc^p(\RR^n)$. For each $n$, it follows $\eps \|\partial_j(f_n * \rho_\eps)\|_{K, \infty} \to 0$ as $\eps \to 0$. 

For any $f \in \Lloc^p$, the following equality holds
\begin{equation}
    \partial_j(f* \rho_\eps)(x) = \int_{\RR^n} f(y) \partial_j^x\rho_\eps(x-y)dy=-\int_{B_\eps} f(x-y) \partial_j\rho_\eps(y)dy,
\end{equation}
and we have the estimate
\begin{align}
    \int_K \left| \partial_j(f* \rho_\eps)(x)\right|^p dx 
    & \leq  \int_K \left| \int_{B_\eps} \left|f(x-y) \partial_j \rho_\eps(y)\right| dy \right|^p dx.
\end{align}
Recall that $p^\prime$ is the conjugate exponent of $p$. Then, by H\"{o}lder's inequality, we obtain
\begin{align}
\int_{B_\eps} |f(x-y) \partial_j \rho_\eps(y)| dy & = \int_{B_\eps} \left(|f(x-y)| |\partial_j \rho_\eps(y)|^{1/p}\right)  |\partial_j \rho_\eps(y)|^{1/p^\prime} dy \\
& \leq \left(\int_{B_\eps} \left(|f(x-y)|^p |\partial_j \rho_\eps(y)|\right) dy\right)^{1/p} 
 \left(\int_{B_\eps} |\partial_j \rho_\eps(y)| dy\right)^{1/p^\prime}, 
\end{align}
and
\begin{align}\label{eq:JensenIneq}
    \left| \int_{B_\eps} f(x-y) \partial_j \rho_\eps(y)  dy\right|^p & \leq \left(\int_{B_\eps} |f(x-y)|^p |\partial_j \rho_\eps(y)| dy\right) 
 \|\partial_j\rho_\eps\|_{L^1(B_\eps)}^{p-1}, 
\end{align}
where this last inequality is a form of Jensen's inequality. 

Fix a compact set $K \subset \RR^n$. Let $\eps_1>0$, then there exists $N>0$ such that $\|f-f_n\|_{L^p(K_1)}\leq \eps_1$ for all $n\geq N$, where $K_{1}:=\{x \in \RR^n\,|\, d(x, K)\leq 1\}$. Given any $n\geq N$ and $\eps<1$, substituting $f$ by $f-f_n$ in Jensen's inequality yields
\begin{align*}
    \eps^p \int_K \left| \partial_j((f - f_n)* \rho_\eps)(x)\right|^p dx     &  =  \eps^p \int_K \left| \int_{B_\eps} (f-f_n)(x-y) \partial_j \rho_\eps(y) dy\right|^p dx   \\
    & \leq  \eps^p  \|\partial_j \rho_\eps\|_{L^1(B_\eps)}^{p-1} \int_K  \int_{B_\eps} |(f-f_n)(x-y)|^p |\partial_j \rho_\eps(y)|  dy  dx\\
    & \leq  \eps^p \|\partial_j \rho_\eps\|_{L^1(B_\eps)}^{p-1} \int_{B_\eps} |\partial_j\rho_\eps(y)| \int_{K_1}  |(f-f_n)(x)|^p   dxdy \\
    & \leq \eps_1^p  \eps^p \|\partial_j \rho_\eps\|_{L^1(B_\eps)}^{p-1} \int_{B_\eps} |\partial_j\rho_\eps(y)| dy \\
    & \leq \eps_1^p \eps^p \|\partial_j \rho_\eps\|_{L^1(B_\eps)}^{p} = \eps_1^p \| \eps \partial_j \rho_\eps\|_{L^1(B_\eps)}^{p}  = \eps_1^p \|\partial_j \rho\|_{L^1(B_1)}^{p},
\end{align*}
where the last equality follows from $\int_{B_{\eps}}\left|\eps \partial_j \rho_{\eps}(y)\right| dy=\int_{B_1}\left|\partial_j \rho(y)\right| d y$. Using the above inequality, we then obtain
\begin{align*}
\eps \|\partial_j(f * \rho_\eps)\|_{L^p(K)}&  \leq  \eps \|\partial_j((f-f_n) * \rho_\eps)\|_{L^p(K)} + \eps \|\partial_j(f_n * \rho_\eps)\|_{L^p(K)} \\
& \leq    \eps_1 \|\partial_j \rho \|_{L^1(B_1)} + \eps \|\partial_j(f_n * \rho_\eps)\|_{K, \infty} \cdot \Vol(K)^\frac{1}{p}.
\end{align*}
Taking the limit superior as $\eps \to 0$, we find
\begin{equation}
\limsup_{\eps \to 0} \eps \|\partial_j(f * \rho_\eps)\|_{L^p(K)} \leq  \eps_1 \|\partial_j \rho \|_{L^1(B_1)}.    
\end{equation}
Since $\eps_1$ is arbitrary, we conclude $\eps \|\partial_j(f * \rho_\eps)\|_{L^p(K)} \to 0$ as $\eps \to 0$.
\end{proof}

The result indicates that controlling convergence rates can play a key role in managing derivatives. We now present the following estimate, which extends \cite[Proposition 3.5, item 2]{Gra:20} from differentiable functions to Sobolev functions.

\begin{lemma}\label{lemma:RateConvSobolev}
Let $a \in \Wloc^{1,p}\left(\RR^n\right)$ and $f \in \Lloc^q\left(\RR^n\right)$, with $p, q \in [1, +\infty)$, $q\geq p^\prime$. Then, for any compact $K \subset \RR^n$, there exists a constant $C>0$ such that
\begin{equation}
\|(a * \rho_\eps) (f* \rho_\eps) - (af)* \rho_\eps \|_{L^r(K)} \leq C \eps,    
\end{equation}
where $r$ is defined by $1/r=1/p+1/q$.
\end{lemma}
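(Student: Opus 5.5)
The plan is to write the commutator as a sum of two terms that can each be controlled by $\eps$ times a norm of $\partial a$. Pointwise, for $x\in K$,
\begin{align*}
(a * \rho_\eps)(x) (f* \rho_\eps)(x) - ((af)* \rho_\eps)(x) &= \int_{B_\eps} \big[(a*\rho_\eps)(x) - a(x-y)\big] f(x-y)\rho_\eps(y)\,dy \\
&= \int_{B_\eps} \big[(a*\rho_\eps)(x)-a(x)\big] f(x-y)\rho_\eps(y)\,dy \\
&\quad + \int_{B_\eps} \big[a(x) - a(x-y)\big] f(x-y)\rho_\eps(y)\,dy .
\end{align*}
The first term equals $\big((a*\rho_\eps)-a\big)(f*\rho_\eps)$. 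The second term is the genuine commutator piece.

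For both pieces we use the basic estimate that for $a\in W^{1,p}$ and $|y|\le\eps$,
\[
\|a(\cdot)-a(\cdot-y)\|_{L^p(K)}\le |y|\,\|\partial a\|_{L^p(K_1)}\le \eps\|\partial a\|_{L^p(K_1)},
\]
where $K_1$ is the closed $1$-neighbourhood of $K$. This follows from the fundamental theorem of calculus along segments, first for smooth $a$ and then by density in $W^{1,p}(K_1)$. Averaging against $\rho_\eps$ with Minkowski's integral inequality gives $\|a*\rho_\eps-a\|_{L^p(K)}\le \eps\|\partial a\|_{L^p(K_1)}$. By Young's inequality, $\|f*\rho_\eps\|_{L^q(K)}\le\|f\|_{L^q(K_1)}$. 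Hölder's inequality with $1/r=1/p+1/q$ then bounds the first term in $L^r(K)$ by $\eps\|\partial a\|_{L^p(K_1)}\|f\|_{L^q(K_1)}$.

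For the second term, apply Minkowski's integral inequality in $y$:
\[
\Big\|\int_{B_\eps}\big[a(\cdot)-a(\cdot-y)\big]f(\cdot-y)\rho_\eps(y)\,dy\Big\|_{L^r(K)}\le \int_{B_\eps}\rho_\eps(y)\,\big\|\big(a(\cdot)-a(\cdot-y)\big)f(\cdot-y)\big\|_{L^r(K)}\,dy.
\]
For each fixed $y$, Hölder's inequality bounds the inner norm by $\|a-a(\cdot-y)\|_{L^p(K)}\|f\|_{L^q(K_1)}\le \eps\|\partial a\|_{L^p(K_1)}\|f\|_{L^q(K_1)}$. Since $\int\rho_\eps=1$, this gives the same $C\eps$ bound with $C=2\|\partial a\|_{L^p(K_1)}\|f\|_{L^q(K_1)}$ for $\eps<1$.

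The condition $q\ge p'$ guarantees $1/r\le 1$, so that $r\ge1$ and Minkowski's and Hölder's inequalities apply in $L^r$. The main technical point is the translation estimate for Sobolev functions. The mean-value bound is not pointwise, since $a$ is not Lipschitz, so we must integrate first and use density. Once that estimate is in hand, the rest is routine bookkeeping.
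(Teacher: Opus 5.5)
Your proof is correct and takes essentially the same route as the paper. The paper also splits the expression into $(a*\rho_\eps-a)(f*\rho_\eps)$ plus the commutator $a(f*\rho_\eps)-(af)*\rho_\eps$, and gains the factor $\eps$ from the fundamental theorem of calculus along segments for smooth $a$ followed by density. The differences are cosmetic. You state a translation estimate, combine it with Minkowski's integral inequality, and apply density only to that translation estimate. The paper instead uses Jensen and Hölder on $K\times B_1\times[0,1]$, applies density to the full commutator bound, and then obtains $\|a*\rho_\eps-a\|_{L^p(K)}\le c_2\eps$ by taking $f\equiv 1$.
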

\begin{proof}
Fix a compact set $K \subset \RR^n$ and let $x \in K$. Consider the following identity,
\begin{align}
   (a * \rho_\eps) (f* \rho_\eps) - (af)* \rho_\eps & = \big[(a * \rho_\eps - a) (f* \rho_\eps)\big] +  \big[a(f* \rho_\eps)-(af)* \rho_\eps\big].
\end{align}
For any $\eps<1$, by H\"{o}lder's inequality and the fact that $\|f* \rho_\eps\|_{L^q(K)} \leq \|f\|_{L^q(K_1)}$, where $K_{1}:=\{x \in \RR^n\,|\, d(x, K)\leq 1\}$, the following inequality holds
$$
\| (a * \rho_\eps) (f* \rho_\eps) - (af)* \rho_\eps\|_{L^r(K)} \leq \|a * \rho_\eps - a\|_{L^p(K)} \|f\|_{L^q(K_1)} +  \|a(f* \rho_\eps) - (af)* \rho_\eps\|_{L^r(K)},
$$
For the second term on the right-hand side, by the definition of convolution, we have
\begin{align*}
 \big(a (f* \rho_\eps) - (af) * \rho_\eps\big) (x) & = \int_{B_\eps} \big(a(x) - a(x-y)\big) f(x-y) \rho_\eps(y) dy\\
& = \eps \int_{B_1} \frac{\big(a(x) - a(x-\eps z)\big)}{\eps} f(x-\eps z)\rho (z) dz.
\end{align*}
Suppose, for a moment, that $a$ is a test function. Then,
\begin{equation}
\big(a (f* \rho_\eps) - (af) * \rho_\eps\big) (x) = \eps \int_{B_1} \int_0^1 \big[Da{(x-t\eps z)} \cdot z\big] f(x-\eps z)\rho (z) dtdz, 
\end{equation}
Integrating over $K$, applying Jensen's inequality~\eqref{eq:JensenIneq} and H\"{o}lder's inequality, we have
\begin{align*}
\int_K \left| a (f* \rho_\eps) - (af * \rho_\eps)\big) (x)\right|^r dx 
& = \eps^r \int_K \left|\int_{B_1} \int_0^1 [Da{(x-t\eps z)} \cdot z ]  f(x-\eps z) \rho(z)  dt dz \right|^rdx\\
& \leq \eps^r  \int_K \int_{B_1} \int_0^1 \left| [Da{(x-t\eps z)} \cdot z ]  f(x-\eps z)\right|^r \rho(z)  dt dz dx\\
& \leq \eps^r \|\rho\|_{B_1, \infty} \int_K \int_{B_1} \int_0^1 \left| [Da{(x-t\eps z)} \cdot z ]  f(x-\eps z)\right|^r  dt dz dx\\
& \leq \eps^r \|\rho\|_{B_1, \infty} \left(\int_K \int_{B_1} \int_0^1 \big| |Da|{(x-t\eps z)}\big|^p  dt dz dx\right)^{r/p}\\
&\quad  \times  \left(\int_K \int_{B_1} \int_0^1  \big|f(x-\eps z)\big|^q  dt dz dx\right)^{r/q}.
\end{align*}
Note that, for any $t \in [0,1]$ and $z \in B_1$, the maps $x \mapsto x-t\eps z$ and $x \mapsto x-\eps z$ maps $K$ onto some bounded open subset of $\RR^n$. Thus, by change of variables, we conclude that
\begin{equation}
\| a (f* \rho_\eps) - (af * \rho_\eps)\|_{L^r(K)} \leq c_1 \eps, \end{equation}
where $c_1>0$ is some positive constant. Since smooth functions are dense in $\Wloc^{1,p}(\RR^n)$, for $p<\infty$, the inequality holds for $a \in \Wloc^{1,p}(\RR^n)$. Taking the previous inequality with $f \equiv 1$ and $r\equiv p$, we have
\begin{equation}\label{eq:ConvRateMollinLp}
\|a* \rho_\eps - a \|_{L^p(K)} \leq c_2 \eps,    
\end{equation}
for some constant $c_2>0$.
Therefore, we have obtained that 
\begin{equation}
\|(a * \rho_\eps) (f* \rho_\eps) - (af)* \rho_\eps\|_{L^r(K)} \leq C \eps,    
\end{equation}
for some constant $C>0$.
\end{proof}

Now we present our first version of Friedrichs' Lemma. The following approach is inspired by the Friedrichs commutator estimates found in \cite[Theorem III.2.10]{Boyer2013}.

\begin{proposition}\label{prop:FriedrichsWp}
For $a \in \Wloc^{1,p}\left(\RR^n\right)$ and $f \in \Lloc^q\left(\RR^n\right)$, with $p, q \in [1, +\infty)$, $q\geq p^\prime$, we have that 
\begin{equation}
    \|\left(a * \rho_{\eps}\right)\left(f * \rho_{\eps}\right)-(a f) * \rho_{\eps}\| _{W^{1,r}(K)} \longrightarrow 0 \text{ as } \eps \to 0,
\end{equation}
where $r$ is defined by $1/r=1/p+1/q$, for any compact $K \subset \RR^n$.
\end{proposition}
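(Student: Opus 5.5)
The $L^r$ part of the $W^{1,r}(K)$ norm is already handled: by \Cref{lemma:RateConvSobolev} it is $O(\eps)$. It therefore suffices to show
\[
\|\partial_j[(a*\rho_\eps)(f*\rho_\eps)-(af)*\rho_\eps]\|_{L^r(K)}\to 0
\]
for each $j$. The plan is the classical Friedrichs density argument: prove a uniform bound in terms of $\|Da\|_{L^p}\|f\|_{L^q}$ on a slightly larger compact set, then approximate $f$ (or $a$) by smooth functions, for which the claim is elementary.

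\textbf{Step 1: the uniform bound.} For fixed $K$ and $\eps<1$, write
\[
\partial_j[(a*\rho_\eps)(f*\rho_\eps)] = (\partial_j a*\rho_\eps)(f*\rho_\eps) + (a*\rho_\eps)\,\partial_j(f*\rho_\eps)
\]
and $\partial_j((af)*\rho_\eps) = (af)*\partial_j\rho_\eps$. Splitting $a*\rho_\eps = a + (a*\rho_\eps - a)$ gives three pieces.

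\emph{First piece.} The term $(\partial_j a*\rho_\eps)(f*\rho_\eps)$ is bounded in $L^r(K)$ by Hölder.

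\emph{Second piece.} For $(a*\rho_\eps-a)\,\partial_j(f*\rho_\eps)$, use \eqref{eq:ConvRateMollinLp}, namely $\|a*\rho_\eps-a\|_{L^p}\le C\eps\|Da\|_{L^p(K_1)}$, together with $\eps\|\partial_j(f*\rho_\eps)\|_{L^q(K)}\le \|\partial_j\rho\|_{L^1}\|f\|_{L^q(K_1)}$, which is the Jensen estimate from \Cref{lemma:AuxFriedrichswp}. Their product is bounded, and in fact tends to zero by \Cref{lemma:AuxFriedrichswp}.

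\emph{Commutator piece.} The remaining term is $a\,\partial_j(f*\rho_\eps)-(af)*\partial_j\rho_\eps$, which equals
\[
\int (a(x)-a(x-y))\,f(x-y)\,\partial_j\rho_\eps(y)\,dy .
\]
Substitute $y=\eps z$ and write $a(x)-a(x-\eps z)=\eps\int_0^1 Da(x-t\eps z)\cdot z\,dt$, first for smooth $a$ and then by density. The factor $\eps$ cancels the $\eps^{-1}$ in $\partial_j\rho_\eps$. Jensen and Hölder, exactly as in the proof of \Cref{lemma:RateConvSobolev}, then bound this by $C\|Da\|_{L^p(K_1)}\|f\|_{L^q(K_1)}$.

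Altogether, the commutator operator $T_\eps(a,f):=\partial_j[(a*\rho_\eps)(f*\rho_\eps)-(af)*\rho_\eps]$ satisfies
\[
\|T_\eps(a,f)\|_{L^r(K)}\le C\big(\|a\|_{W^{1,p}(K_1)}\|f\|_{L^q(K_1)}\big),
\]
with $C$ independent of $\eps$.

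\textbf{Step 2: smooth $f$.} If $f\in C^\infty$, then $af\in W^{1,r}_\loc$ with $\partial_j(af)=\partial_j a\,f+a\,\partial_j f$; this is where $q\ge p'$ keeps the exponents admissible, with $r\ge1$. Then $(af)*\rho_\eps\to af$ in $W^{1,r}(K)$ by standard mollifier convergence. Also $(a*\rho_\eps)(f*\rho_\eps)\to af$ in $W^{1,r}(K)$, since $a*\rho_\eps\to a$ in $W^{1,p}$ and $f*\rho_\eps\to f$ in $C^1$ on $K$. Hence $T_\eps(a,f)\to 0$ in $L^r(K)$.

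\textbf{Step 3: density.} For general $f\in L^q_\loc$, pick smooth $f_k\to f$ in $L^q(K_1)$. Since $T_\eps$ is bilinear, Step 1 gives
\[
\|T_\eps(a,f)\|_{L^r(K)}\le C\|a\|_{W^{1,p}(K_1)}\|f-f_k\|_{L^q(K_1)}+\|T_\eps(a,f_k)\|_{L^r(K)}.
\]
Take $\limsup_{\eps\to0}$ using Step 2, then let $k\to\infty$.

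The main obstacle is Step 1, specifically making the uniform bound rigorous for merely Sobolev $a$. The difference-quotient representation needs the density of smooth functions in $W^{1,p}_\loc$, together with continuity of both sides in $a$ for fixed $\eps$; the latter holds because for fixed $\eps$ every term is continuous in $a\in W^{1,p}$. The exponent bookkeeping also needs care: one has to check that $1/r=1/p+1/q\le1$ is the right Hölder pairing in every piece.
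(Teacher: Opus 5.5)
Your proof is correct. It is the same Friedrichs-type argument as the paper's: an $\eps$-independent bilinear bound plus density, with the same difference-quotient representation of the commutator $a\,\partial_j(f*\rho_\eps)-(af)*\partial_j\rho_\eps$. The organisation differs, though, in a way worth noting.

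\emph{The paper's route.} It splits the derivative as $\partial_j\big((a*\rho_\eps-a)(f*\rho_\eps)\big)+A_\eps$, where $A_\eps=\partial_j\big(a(f*\rho_\eps)\big)-\partial_j\big((af)*\rho_\eps\big)$.
\begin{itemize}
\item It shows the first piece tends to zero directly. This needs the $o(\eps^{-1})$ statement $\eps\|\partial_j(f*\rho_\eps)\|_{L^q(K)}\to0$ of \Cref{lemma:AuxFriedrichswp}.
\item It handles only $A_\eps$ by the uniform bound and density. It approximates in both $a$ and $f$, checking convergence when both are smooth.
\end{itemize}

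\emph{Your route.} You bound the whole derivative $T_\eps(a,f)$ uniformly, and for this the cruder $O(\eps^{-1})$ bound on $\partial_j(f*\rho_\eps)$ suffices. You then approximate only $f$. The case of smooth $f$ with arbitrary $a\in\Wloc^{1,p}$ you settle by the elementary observation that both regularisations converge to $af$ in $W^{1,r}(K)$.

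\emph{What each buys.} Your version is slightly more economical. \Cref{lemma:AuxFriedrichswp} becomes unnecessary: its proof is itself a density argument in $f$, which your Step 3 simply absorbs. You also need only one density step. The paper's isolation of the term $(a_\eps-a)(f*\rho_\eps)$ is what it reuses in \Cref{cor:ConvFriedPlusApproximations}, but your scheme adapts there equally well.

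A small correction to your bookkeeping: $q\ge p'$, i.e.\ $r\ge1$, is really used in Steps 1 and 3 (Minkowski/Jensen in $L^r$ and the triangle inequality), not in Step 2. In Step 2, $af\in W^{1,p}_{\mathrm{loc}}\subset W^{1,r}_{\mathrm{loc}}$ regardless.
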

\begin{proof}
In this proof, given a function $f$, we denote $f_\eps := f * \rho_\eps$ and $\partial_j f_\eps := \partial_j(f *\rho_\eps)$. Fix a compact set $K \subset \RR^n$ and define $K_{1}:=\{x \in \RR^n\,|\, d(x, K)\leq 1\}$. The convergence in $L^r_\mathrm{loc}(\RR^n)$ follows directly from \Cref{lemma:RateConvSobolev}.

Now, we proceed to demonstrate the convergence of first-order derivatives, aiming to establish convergence in $\Wloc^{1,r}(\RR^n)$. For any function $f$, it holds that $\partial_j f_\eps := \partial_j(f* \rho_\eps) =  f * \partial_j \rho_\eps$, then
\begin{align*}
\partial_j (a_\eps f_\eps - (af)_\eps)  & = \partial_j ((a_\eps -a ) f_\eps + af_\eps - (af)_\eps)\\
& = \partial_j ((a_\eps -a ) f_\eps) + \partial_j(af_\eps) - \partial_j(af)_{\eps}.
\end{align*}
The first term on the right-hand side can be controlled by the following inequalities
\begin{align*}
\|\partial_j \left((a_\eps -a ) f_\eps\right)\|_{L^r(K)} & \leq \|\partial_j (a_\eps -a ) f_\eps)\|_{L^r(K)} + \| (a_\eps -a) \partial_j(f_\eps)\|_{L^r(K)} \\
& \leq \|\partial_j (a_\eps -a )\|_{L^p(K)}  \|f_\eps\|_{L^q(K)} + \|a_\eps -a\|_{L^p(K)} \| \partial_j(f_\eps)\|_{L^q(K)}\\
& \leq \|a_\eps -a\|_{W^{1,p}(K)}  \|f\|_{L^q(K_1)} + c_2 \eps \| \partial_j(f_\eps)\|_{L^q(K)},
\end{align*}
where $c_2$ is the constant obtained from \eqref{eq:ConvRateMollinLp}. Since $a_\eps \to a$ in $\Wloc^{1,p}(\RR^n)$ as $\eps \to 0$ and applying \Cref{lemma:AuxFriedrichswp}, we obtain 
\begin{equation}\label{eq:Partial_j_Fried}
    \|\partial_j ((a_\eps -a ) f_\eps)\|_{L^r(K)} \longrightarrow 0 \text{ as } \eps \to 0, \text{ for any compact $K \subset \RR^n$.} 
\end{equation}
Then, it remains to deal with the last two terms at the right side. Temporarily, suppose that $a$ is a test function, i.e., smooth with compact support. Let $x \in K$ and define $A_\eps := \partial_j(af_\eps) - \partial_j(af)_\eps$, then
\begin{align*}
    A_\eps(x) & = \frac{\partial}{\partial x^j} \left(a(x) \int_{B_\eps} f(y) \rho_\eps(x-y)dy\right) - \int_{B_\eps} a(y) f(y) \frac{\partial}{\partial x^j} \big(\rho_\eps(x-y)\big) dy \\
    & = \frac{\partial a}{\partial x^j}(x) f_\eps(x) + \int_{B_\eps} \big(a(x)-a(y)\big) f(y) \frac{\partial}{\partial x^j}\big( \rho_\eps(x-y)\big) dy.
\end{align*}
By using that $\partial_j^x\big( \rho_\eps(x-y)\big) = - \partial_j^y\big( \rho_\eps(x-y)\big)$, we have,
\begin{align*}  
     A_\eps(x)  & =\frac{\partial a}{\partial x^j}(x) f_\eps(x) + \int_{B_\eps} \big(a(y)-a(x)\big) f(y) \frac{\partial}{\partial y^j}\big( \rho_\eps(x-y)\big) dy\\
     & = \frac{\partial a}{\partial x^j}(x)  f_\eps(x) + \int_{B_1} \frac{a(x-\eps z)-a(x)}{\eps} f(x-\eps z) \frac{\partial \rho}{\partial z^j}(z) dz\\
    & = \frac{\partial a}{\partial x^j}(x)  f_\eps(x) - \int_{B_1}\int_0^1 \big[ Da{(x-t\eps z)}\cdot z \big] f(x-\eps z) \frac{\partial \rho}{\partial z^j}(z) dtdz.
\end{align*}
By Jensen's inequality~\eqref{eq:JensenIneq} and H\"{o}lder's inequality, it follows that
\begin{align*}
\|A_\eps\|_{L^r(K)} & \leq \|Da\|_{L^p(K)} \|f_\eps\|_{L^q(K)}\\
& +\left( \|\partial_j\rho\|_{L^1(B_1)}^{r-1}  \int_{K} \int_{B_1} \int_0^1 \big| |Da|{(x-t\eps y)} f(x-\eps y)\big|^r |\partial_j\rho(y)| dtdydx\right)^\frac{1}{r}\\
& \leq \|Da\|_{L^p(K)} \|f_\eps\|_{L^q(K)} +\left( \|\partial_j\rho\|_{L^1(B_1)}^{r-1} \int_{B_1} |\partial_j\rho(y)| dy\right)^\frac{1}{r}  \|Da\|_{L^p(K_1)} \|f\|_{L^q(K_1)}\\
& \leq \|Da\|_{L^p(K)} \|f\|_{L^q(K_1)} + \|\partial_j \rho\|_{L^1(B_1)} \|Da\|_{L^p(K_1)} \|f\|_{L^q(K_1)},
\end{align*}
Thus, we conclude that
\begin{equation}
\|A_\eps\|_{L^r(K)} \leq C \|Da\|_{L^{p}(K_1)}\|f\|_{L^q(K_1)},    
\end{equation}
for some positive constant $C$. This estimate holds if $a$ is smooth, and thus is valid by approximation for arbitrary $a \in \Wloc^{1,p}(\RR^n)$.

Let us denote by $\mathcal{A}_\eps$ the bilinear map which, to any $(a, f) \in \Wloc^{1,p}(\RR^n) \times \Lloc^q(\RR^n)$, associates the term $A_\eps \in \Lloc^r(\RR^n)$. Then, we have proved that
\begin{equation}
\|\mathcal{A}_\eps(a, f)\|_{L^r(K)} \leq C \|a\|_{W^{1, p}(K_1)}\|f\|_{L^q(K_1)},    
\end{equation}
for any compact $K$.
Since $A_\eps = \partial_j(af_\eps) - \partial_j(af)_{\eps} \to 0$ as $\eps \to 0$ if $a$ and $f$ are smooth, then it holds that
\begin{equation}
    \|\mathcal{A}_\eps(a, f)\|_{L^r(K)} \longrightarrow 0 \text{ as } \eps \to 0.
\end{equation} 
For $p, q< \infty$, smooth functions known to be dense in $\Wloc^{1,p}(\RR^n)$ and $\Lloc^q(\RR^n)$. For any fixed smooth function $f$, the linear operator $\mathcal{A}_\eps(\cdot, f): W^{1, p}(K_1) \to L^r(K)$ corresponds to a family of uniformly bounded linear operators that converges to zero on a dense subset as $\eps \to 0$. Applying this argument for each entry, we conclude that
\begin{equation} \label{eq:CongA_eps}
\|\mathcal{A}_\eps(a, f)\|_{L^r(K)} \longrightarrow 0 \text{ as } \eps \to 0,
\end{equation}
for any $(a, f) \in \Wloc^{1,p}(\RR^n) \times \Lloc^q(\RR^n)$ and for any compact $K$.

To sum up, let any pair $(a, f) \in \Wloc^{1,p}(\RR^n) \times \Lloc^q(\RR^n)$, then we have
\begin{align*}
\|\partial_j (a_\eps f_\eps - (af)_\eps)\|_{L^r(K)}& \leq     \|\partial_j ((a_\eps -a ) f_\eps)\|_{L^r(K)} + \|\partial_j(af_\eps) - \partial_j(af)_\eps\|_{L^r(K)}\\
& \leq \|a_\eps -a\|_{W^{1,p}}  \|f\|_{L^q(K_1)} + c_2 \eps \| \partial_j(f_\eps)\|_{L^q(K)} + \| \mathcal{A}_\eps(a, f)\|_{L^r(K)},
\end{align*}
which converges to zero as $\eps \to 0$ by \eqref{eq:CongA_eps} and \eqref{eq:Partial_j_Fried}.
\end{proof}

To handle the scalar curvature, the preceding result requires an adaptation. We replace $a * \rho_\eps$ with a more general net $a_\eps \to a$ as $\eps \to 0$, which converges sufficiently fast to $a$. This adjustment is necessary because, when examining the scalar curvature of the regularised metric in coordinates, the terms depend on the inverse of the metric $g\,\star_M \rho_\eps$ rather than on $g^{-1} \star_M \rho_\eps$. 

\begin{corollary}
\label{cor:ConvFriedPlusApproximations}
    Let $p,q \in [1,+\infty)$, $q \geq p^\prime$. Let $a, a_\eps \in C^0(\RR^n)\cap \Wloc^{1,p}\left(\RR^n\right)$ and assume that $a_\eps \to a$ in $\Wloc^{1,p}(\RR^n)$ and in $\Cloc^0\left(\RR^n\right)$ as $\eps \to 0$. Additionally, suppose that there exists $c_K$ such that $\| a - a_\eps\|_{L^p(K)} \leq c_K \eps$ for any compact $K$. Let $f \in \Lloc^q\left(\RR^n\right)$, then 
    \begin{equation}
        a_\eps  (f * \rho_\eps) - (a f)* \rho_\eps \longrightarrow 0 \text{ in } \Wloc^{1,r}(\RR^n) \text{ as } \eps \to 0,
    \end{equation}
    where $r$ is defined by $1/r=1/p+1/q$.
\end{corollary}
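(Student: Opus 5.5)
The plan is to reduce the statement to \Cref{prop:FriedrichsWp} by splitting off the part that involves $a_\eps - a*\rho_\eps$. With the shorthand $f_\eps := f*\rho_\eps$, write
\begin{equation*}
a_\eps f_\eps - (af)*\rho_\eps = \big[(a*\rho_\eps) f_\eps - (af)*\rho_\eps\big] + (a_\eps - a*\rho_\eps) f_\eps .
\end{equation*}
The first bracket tends to $0$ in $\Wloc^{1,r}(\RR^n)$ by \Cref{prop:FriedrichsWp}, since $a\in \Wloc^{1,p}$, $f\in\Lloc^q$ and $q\geq p'$. It therefore remains to show that $b_\eps := a_\eps - a*\rho_\eps$ satisfies $b_\eps f_\eps \to 0$ in $\Wloc^{1,r}$.

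Fix a compact $K$ and let $K_1$ be its closed $1$-neighbourhood. By hypothesis $a_\eps\to a$ in $\Wloc^{1,p}$, and by \Cref{prop:CongTeps} also $a*\rho_\eps \to a$ in $\Wloc^{1,p}$. Hence $\|b_\eps\|_{W^{1,p}(K)}\to 0$. For the zeroth order term, H\"older's inequality with $1/r = 1/p+1/q$ gives
\[
\|b_\eps f_\eps\|_{L^r(K)} \leq \|b_\eps\|_{L^p(K)}\|f\|_{L^q(K_1)} \to 0 .
\]
For the derivatives we use $\partial_j(b_\eps f_\eps) = (\partial_j b_\eps) f_\eps + b_\eps\,\partial_j f_\eps$. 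The first piece is bounded by $\|\partial_j b_\eps\|_{L^p(K)}\|f\|_{L^q(K_1)}\to 0$, in the same way as in \eqref{eq:Partial_j_Fried}.

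The main obstacle is the term $b_\eps\,\partial_j f_\eps$, because $\partial_j f_\eps$ generally blows up. The only available control is \Cref{lemma:AuxFriedrichswp}, which says $\eps\|\partial_j f_\eps\|_{L^q(K)}\to 0$. To exploit this we need a rate $\|b_\eps\|_{L^p(K)}=O(\eps)$, and this is exactly where the extra hypothesis enters. We have $\|a-a_\eps\|_{L^p(K)}\leq c_K\eps$ by assumption, and $\|a*\rho_\eps - a\|_{L^p(K)}\leq c_2\eps$ by \eqref{eq:ConvRateMollinLp} from \Cref{lemma:RateConvSobolev}. The triangle inequality then gives $\|b_\eps\|_{L^p(K)}\leq (c_K+c_2)\eps$. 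Consequently
\[
\|b_\eps\,\partial_j f_\eps\|_{L^r(K)} \leq \|b_\eps\|_{L^p(K)}\|\partial_j f_\eps\|_{L^q(K)} \leq (c_K+c_2)\,\eps\|\partial_j f_\eps\|_{L^q(K)} \to 0 .
\]
Here \Cref{lemma:AuxFriedrichswp} is applied with exponent $q<\infty$.

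Collecting the three estimates and adding the result of \Cref{prop:FriedrichsWp} gives $a_\eps f_\eps-(af)*\rho_\eps\to 0$ in $W^{1,r}(K)$ for every compact $K$, which is the claim. The $C^0$ convergence hypothesis is not needed in this argument.
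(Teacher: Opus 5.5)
Your proposal is correct and follows the paper's proof essentially verbatim: you use the same splitting into the Friedrichs commutator $(a*\rho_\eps)(f*\rho_\eps)-(af)*\rho_\eps$ (handled by \Cref{prop:FriedrichsWp}) plus $(a_\eps - a*\rho_\eps)(f*\rho_\eps)$. For the latter, the dangerous term $(a_\eps - a*\rho_\eps)\,\partial_j(f*\rho_\eps)$ is controlled by the $O(\eps)$ rate from the hypothesis and \eqref{eq:ConvRateMollinLp} together with \Cref{lemma:AuxFriedrichswp}, and your remark that the $\Cloc^0$ convergence hypothesis is never actually used is accurate.
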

\begin{proof}
    Consider the following identity
\begin{align}
      a_\eps (f*\rho_\eps) - (af)* \rho_\eps& = \big[a_\eps - (a*\rho_\eps) \big] (f*\rho_\eps) 
  + \big[ (a*\rho_\eps) (f *\rho_\eps) - (af)* \rho_\eps \big].
\end{align}
The convergence in $\Wloc^{1,r}(\RR^n)$ of the second term is an immediate consequence from \Cref{prop:FriedrichsWp}, so it remains to investigate the first term. Let $K$ be a compact subset of $\RR^n$. The $\Lloc^r$ convergence is straightforward, so we only need to control its derivatives. Proceeding with the usual argument, we obtain
\begin{align*}
\|\partial_j  \left(\big[a_\eps - (a*\rho_\eps) \big] (f*\rho_\eps)\right)\|_{L^r(K)} 
& \leq \|a_\eps - (a*\rho_\eps)\|_{W^{1,p}(K)} \|f\|_{L^q(K_1)} \\ &\quad + \|a_\eps - (a*\rho_\eps)\|_{L^p(K)}  \|\partial_j(f*\rho_\eps)\|_{L^q(K)}.    
\end{align*}
It is therefore enough to show that $\|a*\rho_\eps- a_\eps\|_{L^p(K)} \leq c_K \eps$, for some constant $c_K>0$ depending on $a$ and $K$, and that $\|a*\rho_\eps - a_\eps\|_{W^{1,p}(K)}\to 0$ as $\eps \to 0$. Both assertions follow from the hypothesis and \Cref{lemma:RateConvSobolev}, together with standard calculations. Then, by \Cref{lemma:AuxFriedrichswp}, $\big[a_\eps - (a*\rho_\eps) \big] (f*\rho_\eps) \to 0$ in $\Wloc^{1,r}(\RR^n)$ as $\eps \to 0$.
\end{proof}

\subsection{Scalar curvature}
This section focuses on establishing the convergence between the scalar curvature $R[g\star_M \rho_\eps]$ of the regularised metric $g\star_M \rho_\eps$ and the mollified scalar curvature $R[g] \star_M \rho_\eps$ of the non-smooth metric $g$. This result is a key ingredient for controlling the negative part of $R[g\star_M \rho_\eps]$ when $g$ has distributionally nonnegative scalar curvature, enabling the application of the conformal method.

The following result adapts the argument from \cite[Corollary 3.4]{Calisti2025}, where Friedrichs' Lemma played a crucial role in establishing the convergence of the regularised Ricci tensor for $C^{0,1}$ metrics. In our context, we extend this approach to derive a similar result for the scalar curvature when $g$ has lower regularity. 

\begin{proposition}\label{prop:ConvofRs}
    Let $g \in C^0 \cap \Wloc^{1,p}$, $p \in [2,\infty)$, then $R[g] \star_M \rho_\eps - R[g \star_M \rho_\eps ] \to 0$ in $\Lloc^{p/2}(M)$.
\end{proposition}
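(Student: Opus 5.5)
The plan is to localise and then compare the two regularisations term by term, using the decomposition $R = \partial_k V^k + F$ from \eqref{eq:CoordScalarC_W1n}--\eqref{eq:F_defSc_W1n}. Fix a compact $K\subset M$. For $\eps$ small only finitely many charts meet $K$ and the cut-offs $\chi_i$ equal $1$ there, so $R[g]\star_M\rho_\eps$ on $K$ is a finite sum $\sum_i (\psi_i)^*[((\psi_i)_*(\eta_i R[g]))*\rho_\eps]$. The first step is to reduce to a single chart on $\RR^n$. Writing $g_\eps := g\star_M\rho_\eps$, in each chart $g_\eps$ differs from $\sum_i \eta_i g * \rho_\eps$ only by the pull-back/chart-change structure. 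I will treat $g_\eps$ locally as $a_\eps$ with $a = g_{ij}$, and verify that $g_\eps\to g$ in $\Wloc^{1,p}$ and $C^0_\loc$ with $\|g-g_\eps\|_{L^p(K)}\le c_K\eps$, using \eqref{eq:ConvRateMollinLp}. The same convergences hold for $g_\eps^{-1}\to g^{-1}$, since inversion is smooth on a compact set of positive definite matrices. The rate transfers because $g_\eps^{-1}-g^{-1}=g_\eps^{-1}(g-g_\eps)g^{-1}$ with uniformly bounded factors. This is exactly the setting of \Cref{cor:ConvFriedPlusApproximations}.

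For the localisation, multiplying by $\eta_i$ gives $\eta_i R[g] = \partial_k(\eta_i V^k) - (\partial_k\eta_i)V^k + \eta_i F$ in $\D'$. Hence $(\eta_i R[g])*\rho_\eps = \partial_k((\eta_i V^k)*\rho_\eps) + ((\eta_i F - V^k\partial_k\eta_i))*\rho_\eps$. Summing over $i$, and using $\sum_i\eta_i=1$ together with the fact that $\sum_i\partial_k\eta_i=0$ up to chart transition terms, I compare this with $R[g_\eps]=\partial_k V^k_\eps + F_\eps$, where $V_\eps,F_\eps$ are built from $g_\eps$.

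The lower order part is routine. We have $F\in\Lloc^{p/2}$, and $F_\eps\to F$ in $\Lloc^{p/2}$ because $g^{-1}_\eps\to g^{-1}$ uniformly and $\partial g_\eps\to\partial g$ in $\Lloc^p$, so products of two first derivatives converge in $L^{p/2}$ by Hölder. Also $F*\rho_\eps\to F$ in $\Lloc^{p/2}$. The terms $V^k\partial_k\eta_i$ lie in $\Lloc^p$ and are handled the same way. So these terms cancel in the limit.

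The main obstacle is the top-order term, namely showing $\partial_k(V^k_\eps - V^k*\rho_\eps)\to0$ in $\Lloc^{p/2}$, i.e.\ $V_\eps - V*\rho_\eps\to0$ in $W^{1,p/2}_\loc$. Each $V^k$ is a sum of terms $g^{ab}g^{cd}\partial g_{ef}$, i.e.\ of the form $a f$ with $a=g^{ab}g^{cd}\in C^0\cap\Wloc^{1,p}$ and $f=\partial g_{ef}\in\Lloc^p$. The regularised version is $a_\eps\,\partial(g_{ef}*\rho_\eps)=a_\eps(f*\rho_\eps)$, where $a_\eps=g_\eps^{ab}g_\eps^{cd}$ satisfies the hypotheses of \Cref{cor:ConvFriedPlusApproximations}: products of uniformly convergent $\Wloc^{1,p}$ functions with an $O(\eps)$ $L^p$ rate. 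Taking $q=p\ge p'$ (valid since $p\ge2$) gives $r=p/2$, so \Cref{cor:ConvFriedPlusApproximations} yields $a_\eps(f*\rho_\eps)-(af)*\rho_\eps\to0$ in $W^{1,p/2}_\loc$.

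The delicate points are bookkeeping. First, the derivative commutes with $\eta_i$ and with the chart pull-backs, so that in overlapping charts $g_\eps$ genuinely has the form $a_\eps (f*\rho_\eps)$ up to terms that are again lower order. I would handle this by working within one chart at a time with $\eta_i g$ and absorbing $\partial\eta_i$ terms into the $L^{p/2}$-convergent part. Second, if $p<2$ were allowed the exponent condition would fail, which is why $p\ge2$ is assumed. Combining the top-order and lower-order parts gives the claimed $\Lloc^{p/2}$ convergence.
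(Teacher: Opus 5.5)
Your proposal is correct and follows essentially the paper's route: you split $R=\partial_kV^k+F$, dispose of $F$ and the cut-off terms by strong $\Lloc^{p/2}$ convergence, and handle the top-order part $V\sim g^{-1}g^{-1}\partial g$ by \Cref{cor:ConvFriedPlusApproximations} with $a=g^{ab}g^{cd}$, $f=\partial g_{ef}$, $q=p$, $r=p/2$; your identity $g_\eps^{-1}-g^{-1}=g_\eps^{-1}(g-g_\eps)g^{-1}$ gives the required $O(\eps)$ rate more directly than the cofactor argument of \Cref{col:inv_converg}. One caution on the chart bookkeeping, which the paper also only sketches: on chart overlaps $g_\eps$ is \emph{not} a single-chart mollification up to lower-order terms, since mollifications of $\partial g$ in two different charts differ, after one further derivative, by something that need not tend to zero in $\Lloc^{p/2}$ (e.g.\ across a kink of $g$); so you must do what your last paragraph suggests, pairing each piece $(\psi_i)^*[((\psi_i)_*(\eta_i g))*\rho_\eps]$ of $g_\eps$ with the matching piece $(\psi_i)^*[((\psi_i)_*(\eta_i R[g]))*\rho_\eps]$ of $R[g]\star_M\rho_\eps$ and applying Friedrichs' lemma chart by chart.
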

    \begin{proof}
    In this proof, let us denote $g \star_M \rho_\eps$ as $g_\eps$. Recall that, in coordinates, the distributional scalar curvature is given by
    \begin{equation}
        R[g] = \partial_k V_g^k + F_g,    
    \end{equation}
    where the vector field $V_g$ and the scalar field $F_g$ are given by \eqref{eq:V_defSc_W1n} and \eqref{eq:F_defSc_W1n}, respectively. It therefore suffices to show the convergence of the following terms
    \begin{equation}\label{eq:term1}
        F_{g_\eps} - F_g \star_M \rho_\eps \longrightarrow 0 \text{ in } \Lloc^{p/2},
    \end{equation} 
    and
    \begin{equation}\label{eq:term3}
        \partial_k V^k_{g_\eps}- \partial_k \left(V^k_g\right) \star_M \rho_\eps \longrightarrow 0 \text{ in } \Lloc^{p/2}.
    \end{equation}

    The term $F_g$ is schematically composed of sums of terms such as $g^{-1}\Gamma[g] \Gamma[g]$ and $\partial g^{-1} \Gamma[g]$, i.e., terms involving at most first weak derivatives of $g$. Similarly for $F_{g_\eps}$. Since $g$ is $\Wloc^{1,p}\cap C^0$, we have that $g_\eps \to g$ and $g_\eps^{-1} \to g^{-1}$ in $\Wloc^{1,p}(M)$, which implies $\Gamma_{kl}^m[g_\eps] \to \Gamma_{kl}^m[g]$ in $\Lloc^p(M)$, for all indices $k,l,m$. Therefore, by H\"{o}lder's inequality and continuity of $g$, all terms in \eqref{eq:term1} of the form $g_\eps^{-1}\Gamma[g_\eps] \Gamma[g_\eps]$ converge in $\Lloc^{p/2}(M)$ to $g^{-1} \Gamma[g] \Gamma[g]$. For terms of the form $\partial g^{-1} \Gamma[g]$, since they involve at most first derivatives of $g$, the same argument implies convergence in $\Lloc^{p/2}(M)$ of terms $\partial g_\eps^{-1}\Gamma[g_\eps]$ to $\partial g^{-1} \Gamma[g]$. This is sufficient to establish \eqref{eq:term1}.

For the term containing $V_g$, as discussed at the end of \Cref{subsection:DistCurvature}, it suffices to work in coordinates. Given a coordinate chart $(\psi, U)$, expressing $R[g\star_M \rho_\eps]$ and $R[g] \star_M \rho_\eps$ in local coordinates reduces the convergence problem to showing that $$
\mathfrak{g}_\eps^{ij} \mathfrak{g}_\eps^{ks} (\eta\partial_s \mathfrak{g}_{lm})* \rho_\eps - \left(\mathfrak{g}^{ij} \mathfrak{g}^{ks} \eta\partial_s \mathfrak{g}_{lm} \right) * \rho_\eps \longrightarrow 0 \text{ in } \Wloc^{1,p/2},
$$
where $\mathfrak{g}:=(\psi_*g)$ and $\mathfrak{g}_\eps:=(\psi_*g_\eps)$, and $\eta$ is any bump function compactly supported in $\psi(U)$, for all indices $i,j,k,s,l,m$.

The above expression corresponds exactly to the relevant terms in the proof of \cite[Lemma 4.6]{Gra:20} with an extra term given by the inverse of the metric. Since $C^0 \cap W^{1,p}$ is a Banach algebra, a standard computation reveals that $g^{ij} g^{ks} - g_\eps^{ij}g_\eps^{ks} \to 0$ in $\Wloc^{1,p}(M)$ and in $\Cloc^0\left(M\right)$ as $\eps \to 0$. From \Cref{col:inv_converg} (see below), for any compact $K$ there exist constants $c_K$ and sufficiently small $\eps$ such that
\begin{equation}\label{eq:convrate_inv}
    \|g^{ij} - g^{ij}_\eps\|_{L^p(K)} \leq c_K \eps,
\end{equation}
and it is straightforward to verify that this also implies
\begin{equation}
    \|g^{ij} g^{ks} - g_\eps^{ij}g^{ks}_\eps\|_{L^p(K)} \leq C_K \eps,
\end{equation}
for some $C_K>0$. Setting $a = \mathfrak{g}^{ij} \mathfrak{g}^{ks}$, $a_\eps = \mathfrak{g}_\eps^{ij} \mathfrak{g}_\eps^{ks}$, the $\Wloc^{1,p/2}$ convergence follows from Friedrichs' Lemma (see \Cref{cor:ConvFriedPlusApproximations}).
\end{proof}

\begin{corollary}
\label{col:inv_converg}
    Let $g \in C^0\cap \Wloc^{1,p}$, $p \in [1,\infty)$, for any compact $K \subset M$ there exists $c_K>0$ and $\eps_0(K)>0$ such that 
    \begin{equation}
        \|g^{-1} - (g \star_M \rho_\eps)^{-1}\|_{L^p(K)} \leq c_K \eps,
    \end{equation}
    for all $\eps < \eps_0(K)$. Moreover, we also have the following convergences
    \begin{equation}
    g^{-1} - (g \star_M \rho_\eps)^{-1} \longrightarrow 0 \text{ as } \eps \to 0, \text{ in } \Wloc^{1,p} \text{ and } \Cloc^0.
    \end{equation}
\end{corollary}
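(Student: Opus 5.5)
The plan is to reduce everything to a single chart and to the algebraic identity
\[
g^{-1} - g_\eps^{-1} = g^{-1}(g_\eps - g)\,g_\eps^{-1}, \qquad g_\eps := g\star_M\rho_\eps .
\]
Fix a compact $K$ and choose $\eps_0(K)$ so small that only finitely many charts meet $K$. On each of these charts all cut-off functions $\chi_i$ equal $1$ near $K$. In coordinates, the components of $g_\eps$ are then finite sums of mollified compactly supported $C^0\cap W^{1,p}$ functions $(\eta_i\mathfrak g)*\rho_\eps$, transported by the chart changes. Since $g$ is continuous and $g_\eps\to g$ in $\Cloc^0$ by \Cref{prop:CongTeps}, the following hold for $\eps<\eps_0(K)$ after possibly shrinking $\eps_0(K)$:
\begin{itemize}
\item the eigenvalues of $g_\eps$ on $K$ are bounded below uniformly;
\item $g_\eps^{-1}$ is uniformly bounded on $K$ and converges to $g^{-1}$ in $\Cloc^0$, by the cofactor formula.
\end{itemize}

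\textbf{Rate estimate.} From the identity, the components satisfy
\[
\|g^{-1}-g_\eps^{-1}\|_{L^p(K)} \le \|g^{-1}\|_{\infty,K}\,\|g_\eps^{-1}\|_{\infty,K}\,\|g-g_\eps\|_{L^p(K)} .
\]
It therefore suffices to show $\|g - g_\eps\|_{L^p(K)}\le c\,\eps$. Chartwise, $g-g_\eps$ is a finite sum of terms $(\psi_i)^*\bigl[(\eta_i\mathfrak g) - (\eta_i\mathfrak g)*\rho_\eps\bigr]$ with $\eta_i\mathfrak g\in W^{1,p}$. Each such term is bounded by $c_2\eps$ via \eqref{eq:ConvRateMollinLp} from \Cref{lemma:RateConvSobolev} (take $f\equiv 1$). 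Pulling back by the smooth diffeomorphisms $\psi_i$ changes this only by constants depending on $K$.

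\textbf{$\Wloc^{1,p}$ convergence.} Differentiate the inverse:
\[
\partial g_\eps^{-1} = -g_\eps^{-1}(\partial g_\eps)\,g_\eps^{-1}, \qquad \partial g^{-1} = -g^{-1}(\partial g)\,g^{-1} \quad\text{a.e.}
\]
The latter holds because $C^0\cap W^{1,p}$ is stable under smooth functions of its entries on the region where $\det g$ is bounded away from $0$. Then write the difference $\partial g_\eps^{-1}-\partial g^{-1}$ telescopically as
\[
-(g_\eps^{-1}-g^{-1})\,\partial g_\eps\, g_\eps^{-1}\;-\;g^{-1}(\partial g_\eps-\partial g)\,g_\eps^{-1}\;-\;g^{-1}\,\partial g\,(g_\eps^{-1}-g^{-1}).
\]
The first and last terms are (sup norm of something tending to $0$)$\times$(bounded in $L^p$), since $\partial g_\eps$ is bounded in $L^p(K)$. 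The middle term is (bounded)$\times$(something tending to $0$ in $L^p$), by item 3 of \Cref{prop:CongTeps}. All three terms therefore vanish in $L^p(K)$, which together with the rate estimate gives convergence in $\Wloc^{1,p}$.

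\textbf{Main obstacle.} The only delicate step is rigorously justifying the product and chain rules for $g^{-1}$ when $g\in C^0\cap W^{1,p}$ and $p$ may be small; $p=1$ is allowed. I would handle this by approximation. On $K$ the inverse map $A\mapsto A^{-1}$ is smooth on a compact set of uniformly positive definite matrices, so the chain rule for $W^{1,p}$ composed with a $C^1$ function having bounded derivative on the range applies. This requires only continuity, i.e.\ boundedness of $g$, not any extra integrability.
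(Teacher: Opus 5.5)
Your proof is correct and follows the same strategy as the paper. Locally uniform convergence gives uniform bounds and non-degeneracy of $g\star_M\rho_\eps$ on $K$, and the $O(\eps)$ rate then comes from \eqref{eq:ConvRateMollinLp} applied chartwise to $g-g\star_M\rho_\eps$. The only difference is the algebraic device: you use $g^{-1}-g_\eps^{-1}=g^{-1}(g_\eps-g)g_\eps^{-1}$ and its differentiated, telescoped form instead of the paper's cofactor formula $A^{-1}=\frac{1}{\det A}C(A)^T$, which avoids expanding products of matrix entries and makes explicit the $\Wloc^{1,p}$ convergence that the paper leaves implicit.
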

\begin{proof}
This proof is a straightforward adaptation of \cite[Corollary 4.3]{Gra:20}. Given that $g\star_M \rho_\eps$ converges locally uniformly to $g$, the net is locally uniformly bounded and locally uniformly non-degenerate. Using this fact together with \Cref{lemma:RateConvSobolev}, the result follows from \eqref{eq:ConvRateMollinLp} and the general formula $A^{-1} = \frac{1}{\text{det} A} C(A)^T$, where $C(A)$ denotes the matrix of cofactors for $A$. Although the determinant involves product entries, the convergence still has order $p$, as a consequence of the continuity of $g$.
\end{proof}
  
Before proceeding, we fix some notation. Let $f:M \to \RR$ be a function, and we use $f_+:M \to [0, +\infty)$ and $f_-:M \to [0, +\infty)$ to represent its positive and negative parts, respectively, i.e., $f = f_+ - f_-$. Let $g_\eps$ denote the family of metrics from \Cref{lemma:FamilyOfMetrics}, the following result shows that $R[g_\eps]_-$ converges to zero in $L^{p/2}(M)$ as $\eps \to 0$, provided that $g$ has nonnegative distributional scalar curvature (see \Cref{def:PosDistributions}). 

\begin{proposition}\label{prop:NegScalConv}
       Let $g$ be a complete, asymptotically flat $C^0 \cap \Wloc^{1,p}$, $p \in[2,\infty)$, metric, which is smooth outside a compact set. Assume that $R[g] \geq 0 $ in $\D^\prime(M)$. Then the negative part of the scalar curvature of the metric $g_\eps$ satisfies 
    $$\|R[g_\eps]_-\|_{L^{p/2}(M,g)} \longrightarrow 0  \text{ as } \eps \to 0.$$
\end{proposition}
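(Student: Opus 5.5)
Outside the compact set $K_0$ of \Cref{lemma:FamilyOfMetrics} we have $g_\eps \equiv g$. There $g$ is smooth, so its distributional scalar curvature coincides with the classical one. A smooth function that is nonnegative as a distribution is nonnegative pointwise, hence $R[g_\eps]_- = R[g]_- = 0$ on $M\setminus K_0$. It therefore suffices to show $\|R[g_\eps]_-\|_{L^{p/2}(K_0)} \to 0$, and since $g$ is continuous the choice of metric in the $L^{p/2}$ norm on $K_0$ is irrelevant.

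On $K_0$ I compare $R[g_\eps]$ with the nonnegative smooth function $R[g]\star_M\rho_\eps$. The remark after \Cref{def:PosDistributions} gives $R[g]\star_M \rho_\eps \geq 0$. Since $t\mapsto t_-$ is $1$-Lipschitz and $(\cdot)_-$ vanishes on nonnegative functions, we get pointwise
\begin{equation*}
R[g_\eps]_- \leq \big| R[g_\eps] - R[g]\star_M\rho_\eps \big| .
\end{equation*}
The task thus reduces to showing $R[g_\eps] - R[g]\star_M \rho_\eps \to 0$ in $L^{p/2}(K_0)$, which is essentially \Cref{prop:ConvofRs}.

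The main obstacle is that \Cref{prop:ConvofRs} is stated for the global chartwise regularisation $g\star_M\rho_\eps$, whereas $g_\eps$ is the modified regularisation $\eta_{\mathcal C} g + \sum_i \chi_i(\psi_i)^*[((\psi_i)_*(\eta_i g))*\rho_\eps]$. I would handle this by redoing the proof of \Cref{prop:ConvofRs} for $g_\eps$, splitting $R = \partial_k V^k + F$ as there.
\begin{itemize}
\item The $F$-terms converge in $L^{p/2}_\loc$, because $g_\eps\to g$ in $\Wloc^{1,p}\cap\Cloc^0$ and $g_\eps^{-1}\to g^{-1}$ likewise. This uses only these convergences, so it works equally for $g_\eps$.
\item For the $\partial_k V^k$ terms, one fixes a chart near a point of $K_0$ and uses \Cref{cor:ConvFriedPlusApproximations} with $a_\eps = \mathfrak g^{ij}_\eps\mathfrak g^{ks}_\eps$. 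The ingredients needed are a rate $\|g^{-1}-g_\eps^{-1}\|_{L^p(K)}\le c_K\eps$ and convergence in $\Wloc^{1,p}\cap\Cloc^0$. These follow as in \Cref{col:inv_converg}, since on the piece where $\eta_{\mathcal C}\neq 0$ the term $\eta_{\mathcal C}g$ is exact and the rest is a finite sum of mollified pieces, each of which satisfies \eqref{eq:ConvRateMollinLp}.
\item In the same spirit, $R[g]\star_M\rho_\eps$ should be built with the same cover and partition of unity as $g_\eps$. On the region where $g$ is smooth, the difference between $R[g]$ (or its mollification) and the corresponding terms for $g_\eps$ is controlled by the $C^2$ convergence of \Cref{prop:CongTeps}. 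Near $\partial\mathcal C$, the cut-offs $\chi_i$ and $\eta_{\mathcal C}$ are smooth, so their derivatives only produce lower-order terms, which are handled by the $\Wloc^{1,p}$ convergence.
\end{itemize}
Combining the two parts gives $\|R[g_\eps]_-\|_{L^{p/2}(K_0)}\to 0$, and hence the claim, since the integrand vanishes outside $K_0$.
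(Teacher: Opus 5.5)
Your proposal is correct and follows the paper's proof: $R[g_\eps]_-$ vanishes off $K_0$, you bound it pointwise by $|R[g_\eps]-\cdot|$ against a nonnegative regularisation of $R[g]$ built from the same cover and partition of unity as $g_\eps$ (the paper's $R[g]\star_K\rho_\eps$), and you conclude with the commutator estimate of \Cref{prop:ConvofRs} adapted to $g_\eps$, where you spell out the adaptation the paper only asserts. Use that matched regularisation as the comparison function from the start, rather than the global $R[g]\star_M\rho_\eps$ of your second paragraph. It is nonnegative for the same reason, and it is the quantity the Friedrichs-type estimate actually controls; two differently built regularisations of a singular $R[g]$ need not be close in $L^{p/2}$.
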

\begin{proof}
This proof primarily relies on \Cref{prop:ConvofRs}, which was originally derived using $g \star_M \rho_\eps$ instead of $g_\eps$. Since both constructions are based on mollifiers combined with partitions of unity, it is clear that the arguments applies to $g_\eps$ by considering a localised regularisation $R[g] \star_K \rho_\eps$ of $R[g]$ on a compact set $K$, analogous to the construction of $g_\eps$ (see \Cref{lemma:FamilyOfMetrics}). In particular, $R[g] \star_K \rho_\eps = R[g]$ outside a compact set $K_0$.

Recall that if $R[g]\geq 0$ in $\D^\prime(M)$ $\implies R[g]\star_K\rho_\eps \geq 0$. Splitting the scalar curvature of the regularised metric into positive and negative parts, we have that $R[g_\eps] = R[g_\eps]_+ - R[g_\eps]_-$ and $K_0 \subset M$ contains the support of $R[g_\eps]_-$, since $g_\eps =g$ and $R[g_\eps] = R[g]$ away from $K_0$. Using that $R[g]\star_K\rho_\eps \geq 0 $, the scalar curvature can be bounded from below as
\begin{equation}
R[g_\eps] = \left(R[g_\eps] - R[g]\star_K \rho_\eps\right) + R[g]\star_K \rho_\eps \geq - \left| R[g_\eps] - R[g]\star_K \rho_\eps\right|,      
\end{equation}
which yields $\left|R[g_\eps]_-\right| \leq \left|R[g_\eps] - R[g] \star_K \rho_\eps \right|$. Taking all norms with respect to $g$, we have
\begin{align*}
    \|R[g_\eps]_- \|_{L^{p/2}(M)}  & = \| R[g_\eps]_- \|_{L^{p/2}(K_0)}\\
    & \leq  \|R[g_\eps] - R[g] \star_K \rho_\eps \|_{L^{p/2}(K_0)}.
\end{align*}
Hence, the desired conclusion follows directly from \Cref{prop:ConvofRs}.
\end{proof}

\begin{remark}\label{prop:NegScalConvMetricEps}
    If the Sobolev space $W^{1,p}(M)$ is defined with respect to $g$, and considering the family of metrics $g_\eps$ constructed in \Cref{lemma:FamilyOfMetrics}, then by \Cref{lemma:bilinearmetric} we have the estimate $\|R[g_\eps]_-\|_{L^{p/2}(M, g_\eps)} \leq  \rho(\eps)^n \|R[g_\eps]_-\|_{L^{p/2}(M, g)}$ which converges to zero by \Cref{prop:NegScalConv} and $\rho(\eps) \to 1$ as $\eps \to 0$. This convergence result will be useful in the subsequent section.
\end{remark}

\section{Positive mass theorem}\label{section:PMT}

\subsection{Nonnegativity of the ADM mass}
In this section, we establish our main theorem by constructing a sequence of smooth metrics $\gtilde_\eps$ converging to $g$, with $R[\gtilde_\eps]\geq0$. By the smooth positive mass theorem, \Cref{theo:OriginalPMT}, this implies $m(\gtilde_\eps)\geq 0$, for all $\eps>0$. It therefore remains to show that $m(\gtilde_\eps) \to m(g)$ as $\eps \to 0$. We begin with the approximating metrics $g_\eps$ from \Cref{lemma:FamilyOfMetrics}, and a key challenge is that $g_\eps$ need not have nonnegative scalar curvature. To address this, we follow the approach introduced by \cite{Miao2002}, which consists in performing a conformal deformation of the smooth metrics $g_\eps$ to obtain a family of metrics $\gtilde_\eps$ with nonnegative scalar curvature. This conformal deformation is carried out using the following lemma, originally proved by Schoen and Yau \cite{PMTSchoenYau}.

\begin{lemma}[\cite{PMTSchoenYau, Miao2002}]\label{lemma:MiaoConfMetric}
    Let $g$ be a $C^2$ complete, asymptotically flat metric on $M^n$ and a compactly supported function $f$. There exists a number $\eps_0(g)>0$ so that if
    \begin{equation}\label{eq:ConfMetricCondition}
    \left( \int_M |f_-|^{\frac{n}{2}} \,\dg \right)^\frac{2}{n} < \eps_0(g),    
    \end{equation}
    then
\begin{equation}\label{eq:MiaoPDESystem}
\left\{ \begin{aligned} 
  -4\frac{(n-1)}{(n-2)} \Delta_g u + f u & = 0\\
  \lim_{x \to \infty} u & = 1
\end{aligned} \right.
\end{equation}
has a $C^2$ positive solution $u$ on $M$ so that $u = 1 + \frac{A}{|x|^{n-2}} + \omega$ for some constant $A$ and some function $\omega$, where $\omega = O_2\left(|x|^{1-n}\right)$.
\end{lemma}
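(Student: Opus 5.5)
We prove \Cref{lemma:MiaoConfMetric} following Schoen--Yau: write $u = 1 + v$ and solve for $v$ decaying at infinity. Set $c_n := 4\frac{n-1}{n-2}$. The equation becomes
\begin{equation*}
L v := -c_n \Delta_g v + f v = -f ,
\end{equation*}
with $-f$ compactly supported. We seek $v$ in the weighted space obtained by completing $C^\infty_c(M)$ in the norm $\|\nabla^g v\|_{L^2(M,g)}$. By the Sobolev inequality \eqref{eq:SobolevConstant0}, this space embeds continuously into $L^{n^*}(M,g)$.

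\textbf{Coercivity.} The key step is that the bilinear form
\begin{equation*}
B(v,w) = \int_M \left( c_n \langle \nabla v, \nabla w\rangle + f v w \right) d\mu_g
\end{equation*}
is coercive when the negative part of $f$ is small. Using $f \geq -f_-$, Hölder and Sobolev give
\begin{equation*}
\int_M f_- v^2 \leq \|f_-\|_{L^{n/2}} \|v\|_{L^{n^*}}^2 \leq C(g)\,\|f_-\|_{L^{n/2}} \|\nabla v\|_{L^2}^2 .
\end{equation*}
So if $\eps_0(g) := c_n/(2C(g))$, then $B(v,v) \geq \frac{c_n}{2}\|\nabla v\|_2^2$; the positive part of $f$ only helps. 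There is a subtlety: $\int f_+ v^2$ must be finite. This holds because $f$ is compactly supported and bounded (or $L^{n/2}$), so that term is controlled by the $L^{n^*}$ norm. The right-hand side $w \mapsto -\int f w$ is a bounded functional, since $f$ has compact support and lies in $L^{(n^*)'}$ locally. Lax--Milgram then gives a weak solution $v$.

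\textbf{Regularity and asymptotics.} Interior elliptic regularity, assuming $f$ is as regular as needed (in applications $f = c_n^{-1}\cdot$ const $\times R[g_\eps]$, which is smooth), gives $v \in C^2$. Outside the support of $f$, $v$ is $g$-harmonic on the asymptotically flat end and lies in $L^{n^*}$. The standard expansion for harmonic functions on asymptotically flat ends, via barrier/weighted Schauder estimates and the decay $g - \delta = O_2(|x|^{-\tau})$, yields
\begin{equation*}
v = \frac{A}{|x|^{n-2}} + O_2\left(|x|^{1-n}\right),
\end{equation*}
so $u \to 1$ with the stated form.

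\textbf{Positivity of $u$.} This is the main obstacle. Let $\Omega = \{u<0\}$ and test the equation with $u_- = \max(-u,0)$, which has compact support in $\overline\Omega$ because $u\to 1$. This gives
\begin{equation*}
c_n\|\nabla u_-\|_2^2 = -\int_M f u_-^2 \leq \int_M f_- u_-^2 \leq C(g)\,\|f_-\|_{L^{n/2}}\, \|\nabla u_-\|_2^2 < c_n \|\nabla u_-\|_2^2
\end{equation*}
unless $u_- \equiv 0$. Hence $u \geq 0$. The strong maximum principle (Harnack) applied to $-c_n\Delta u + f u = 0$ then gives $u > 0$ everywhere, since $u \not\equiv 0$.
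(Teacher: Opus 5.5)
Your argument is correct and is essentially the standard Schoen--Yau proof that the paper cites rather than reproduces. The smallness threshold comes from the Sobolev constant: your $\eps_0=c_n/(2C(g))$ agrees, up to normalisation, with the paper's remark that $\eps_0(g_\eps)=1/C(g_\eps)$. Your positivity step (testing with $u_-$, then the strong maximum principle) is the same one the paper runs in the Claim inside the proof of \Cref{prop:Ricciflat}. The only real difference from that Claim is how existence is obtained: there it comes from index-zero Fredholm theory in Bartnik's weighted spaces, whereas you use Lax--Milgram on the completion of $C^\infty_c(M)$ under $\|\nabla v\|_{L^2}$.

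Two minor caveats. First, in the paper's applications $f$ is $-c_n^{-1}R[g_\eps]_-$ or $R[g_\eps]_+t\varphi-R[g_\eps]_-$, so it is only Lipschitz, not smooth; this is still enough for $C^{2,\alpha}$ via Schauder. Second, for $n=3$ with $\tau\in(\tfrac12,1)$, the ``standard expansion'' in general gives only $\omega=O_2(|x|^{-1-\tau})$, not $O_2(|x|^{1-n})$; the sharper bound in the statement is inherited from Schoen--Yau's stronger decay assumptions. The weaker decay is nevertheless sufficient for the mass relation $m(\gtilde_\eps)=m(g_\eps)+2A_\eps$ used later.
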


A standard argument, as presented in \cite{Miao2002}, ensures that we can conformally deform $g_\eps$ to achieve nonnegative scalar curvature without significantly changing the mass. We restate this argument here for completeness. Let $u$ be a smooth positive function on $M^n$. If $\gtilde =  u^\frac{4}{n-2} g$, we have that 
\begin{equation}
    R[\gtilde] = u^{-\frac{n+2}{n-2}} L_g u,
\end{equation} 
where $L_g$ is the \textit{conformal Laplacian} of $g$ and it is given by
\begin{equation}
    L_g u := -c_n \Delta_g u + R[g]u,
\end{equation}
where $c_n = 4\frac{(n-1)}{(n-2)}$. 

Inspecting the proof of the lemma in \cite{PMTSchoenYau}, we observe that $\eps_0(g_\eps) = 1/C(g_\eps)$, where $C(g_\eps)$ is the Sobolev constant of the metric $g_\eps$. Set $f_\eps = - \frac{1}{c_n}R[g_\eps]_-$. Condition \eqref{eq:ConfMetricCondition} can then be rewritten as
\begin{equation}\label{eq:exist_sol}
C(g_\eps) \left(\int_M \frac{1}{c_n}\big|R[g_\eps]_-\big|^{n/2}\,\dgeps\right)^{2/n} \leq 1.    
\end{equation}
By \Cref{prop:NegScalConvMetricEps} and \Cref{prop:SobolevConstant}, there exists $\eps_0>0$ such that, for all $\eps < \eps_0$, the above condition is satisfied.

It then follows from \Cref{lemma:MiaoConfMetric} that, for each $\eps < \eps_0$, there exists a $C^2$, positive solution $u_\eps$ of
\begin{equation}\label{eq:PDEnegativeS}
   - \Delta_{g_\eps}  u_\eps + f_\eps u_\eps = 0,
\end{equation}
satisfying $\lim_{x \to \infty} u_\eps(x) = 1$. Therefore, we can define a Riemannian metric $\gtilde_\eps$ by the conformal rescaling
\begin{equation}\label{eq:DefConfMetricEps}
\gtilde_\eps = u_\eps^\frac{4}{n-2} g_\eps.
\end{equation}
In particular, the nonnegativity of the scalar curvature of $\gtilde_\eps$ follows from the conformal transformation formula for the scalar curvature, 
\begin{align}\label{eq:ConfScalarCurv}
   R[\gtilde_\eps] = u_\eps^{-\frac{n+2}{n-2}} L_{g_\eps} (u_\eps) = u_\eps^{-\frac{n+2}{n-2}} \big( R[g_\eps]_+ u_\eps \big) = u_\eps^{\frac{4}{2-n}} R[g_\eps]_+\geq 0.
\end{align}
The proof of our main theorem now requires control of the solutions $u_\eps$ as $\eps \to 0$. This is crucial because the relationship between the masses of $g_\eps$ and $\gtilde_\eps$ depends on $u_\eps$ and its first derivatives. Henceforth $g_\eps$ denotes the family of metrics from \Cref{lemma:FamilyOfMetrics}. To establish $m(\gtilde_\eps) \to m(g_\eps)$, we present the following proposition.

\begin{proposition}\label{prop:ConvGraduetc}
    Let $g \in C^0 \cap \Wloc^{1,n}$ be a complete, asymptotically flat Riemannian metric on $M^n$, smooth outside a compact set, and assume that $R[g] \geq 0 $ in $\D^\prime(M)$. Let $u_\eps$ be a  solution of \eqref{eq:PDEnegativeS}. Then
    $$\| u_\eps - 1 \|_{L^{n^*}(M, g_\eps)}+\| \nabla^{g_\eps} u_\eps \|_{L^{2}(M, g_\eps)} \longrightarrow 0 \text{ as } \eps \to 0, $$
where $n^* = \frac{2n}{n-2}$.
\end{proposition}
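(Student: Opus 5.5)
Write $v_\eps := u_\eps - 1$ and use the energy method from \cite{Miao2002, grant2014positive}, with the Sobolev inequality \eqref{eq:SobolevConstant0} for $g_\eps$ and the smallness of $\|R[g_\eps]_-\|_{L^{n/2}}$ from \Cref{prop:NegScalConvMetricEps} (case $p=n$, so $p/2=n/2$). Since $f_\eps = -\frac{1}{c_n}R[g_\eps]_-$, \eqref{eq:PDEnegativeS} reads
\begin{equation*}
-\Delta_{g_\eps} v_\eps + f_\eps v_\eps = -f_\eps .
\end{equation*}
Here $f_\eps$ is compactly supported in $K_0$, and by \Cref{lemma:MiaoConfMetric} $v_\eps = O_2(|x|^{2-n})$. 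Therefore $v_\eps \in L^{n^*}$ and $\nabla v_\eps \in L^2$, because $2(1-n)+n-1<-1$ and $n^*(2-n)+n-1<-1$. Testing the equation against $v_\eps$ itself is then justified: multiply by a cut-off $\varphi_R v_\eps$, integrate by parts, and let $R\to\infty$; the boundary terms decay like $R^{2-n}R^{1-n}R^{n-1}\to 0$. Equivalently, approximate $v_\eps$ by compactly supported functions to which \eqref{eq:SobolevConstant0} applies.

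Testing gives
\begin{equation*}
\int_M |\nabla^{g_\eps} v_\eps|^2\,\dgeps = -\int_M f_\eps v_\eps^2\,\dgeps - \int_M f_\eps v_\eps\,\dgeps .
\end{equation*}
Since $-f_\eps = \frac1{c_n}R[g_\eps]_- \ge 0$, estimate both terms by H\"older with exponents $(\frac n2,\frac{n^*}{2})$ and $(\frac{2n}{n+2},n^*)$:
\begin{equation*}
\|\nabla v_\eps\|_2^2 \le \delta_\eps \|v_\eps\|_{n^*}^2 + \|f_\eps\|_{L^{2n/(n+2)}}\|v_\eps\|_{n^*}, \qquad \delta_\eps := \|f_\eps\|_{L^{n/2}(M,g_\eps)} .
\end{equation*}
Because $f_\eps$ is supported in the fixed compact set $K_0$, and $\frac{2n}{n+2}<\frac n2$ when $n\ge3$ (for $n=3$ we have $\frac65<\frac32$), H\"older on $K_0$ gives $\|f_\eps\|_{L^{2n/(n+2)}} \le \Vol_{g_\eps}(K_0)^{\theta}\,\delta_\eps$. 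The volume factor is uniformly bounded by \Cref{lemma:bilinearmetric}.

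Now apply the Sobolev inequality $\|v_\eps\|_{n^*}^2 \le C(g_\eps)\|\nabla v_\eps\|_2^2$. By \Cref{prop:SobolevConstant}, $C(g_\eps)\le \rho(\eps)^n C(g) \le 2C(g)$ for small $\eps$. Writing $X_\eps := \|v_\eps\|_{n^*}$, we get
\begin{equation*}
X_\eps^2 \le 2C(g)\big(\delta_\eps X_\eps^2 + C'\delta_\eps X_\eps\big).
\end{equation*}
By \Cref{prop:NegScalConvMetricEps}, $\delta_\eps\to0$, so for small $\eps$ we have $2C(g)\delta_\eps\le\frac12$. Absorbing that term yields $X_\eps \le 4C(g)C'\delta_\eps \to 0$. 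Feeding this back into the energy inequality gives $\|\nabla v_\eps\|_2^2 \le \delta_\eps X_\eps^2 + C''\delta_\eps X_\eps \to 0$, and since $\nabla^{g_\eps}u_\eps = \nabla^{g_\eps}v_\eps$ this proves the proposition.

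The main obstacle is justifying that the Sobolev inequality and the integration by parts apply to $v_\eps$, which is not compactly supported. The decay $v_\eps = O_2(|x|^{2-n})$ from \Cref{lemma:MiaoConfMetric} and the fact that $g_\eps = g$ outside $K_0$ handle this through the cut-off argument above. One should also check that the solution furnished by \Cref{lemma:MiaoConfMetric} (with constant $c_n$ in front of the Laplacian) matches \eqref{eq:PDEnegativeS} up to rescaling $f_\eps$; this only changes constants.
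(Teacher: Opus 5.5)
Your proof is correct and follows essentially the same route as the paper: set $u_\eps-1$ as the unknown, test the equation against it, bound both terms by H\"older using $\|R[g_\eps]_-\|_{L^{n/2}}$ and the fixed compact support (your factor $\|f_\eps\|_{L^{2n/(n+2)}}\le \Vol_{g_\eps}(K_0)^{1/n^*}\delta_\eps$ is exactly the paper's $\Vol_{g_\eps}(\operatorname{supp} f_\eps)^{1/n^*}$ term), and absorb via the Sobolev inequality with $C(g_\eps)\le\rho(\eps)^nC(g)$. Your cut-off argument, based on the $O_2(|x|^{2-n})$ decay, justifies both the integration by parts and the Sobolev inequality for the non-compactly supported function; the paper only asserts these steps.
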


\begin{proof}
   To establish the current result, we adopt the procedure outlined in \cite{grant2014positive}. Let $\eps>0$ be such that $C(g_\eps) \|R[g_\eps]_-\|_{L^{n/2}(M,g_\eps)}<1$, $R[g_\eps]_-$ compactly supported, and recall that $g_\eps$ agree with $g$ outside of a compact set $K_0$ by \Cref{lemma:FamilyOfMetrics}. 
   
    Define $w_\eps := u_\eps -1$ and $f_\eps := -\frac{1}{c_n}R[g_\eps]_-$. Then $w_\eps$ satisfies
   \begin{equation}
       -\Delta_{g_\eps} w_\eps + f_\eps w_\eps + f_\eps = 0,
   \end{equation}
   Since $w_\eps=O_2(|x|^{2-n})$ and decays sufficiently fast, we can multiply both sides by $w_\eps$, integrate over $M$, and apply the divergence theorem to obtain
\begin{align}
    \int_M |\nabla^{g_\eps} w_\eps|^2 \,\dgeps= -\int_M f_\eps w_\eps^2 + f_\eps w_\eps \,\dgeps.
\end{align} 
Let $n^* = \frac{2n}{n-2}$. It follows from the above identity and H\"{o}lder's inequality that 
\begin{align*}
\int_M |\nabla^{g_\eps} w_\eps|^2 \,\dgeps & =  \int_M f_\eps w_\eps^2 + f_\eps w_\eps\,\dgeps \\
&  \leq  \|f_\eps\|_{L^{n/2}(M, g_\eps)} \|w_\eps\|_{L^{n^*}(M, g_\eps)}^2  + \|f_\eps\|_{L^{n/2}(M, g_\eps)} \|1\|_{L^{n^*}(\operatorname{supp}f_\eps, g_\eps)}  \|w_\eps\|_{L^{n^*}(M, g_\eps)}\\
& \leq  \|f_\eps\|_{L^{n/2}(M, g_\eps)} \|w_\eps\|_{L^{n^*}(M, g_\eps)}^2  + \|f_\eps\|_{L^{n/2}(M, g_\eps)} \Vol_{g_\eps}(\operatorname{supp} f_\eps)^{1/n^*} \|w_\eps\|_{L^{n^*}(M, g_\eps)}.
\end{align*}
The Sobolev inequality yields the following estimate 
\begin{align*}
    \|w_\eps\|_{L^{n^*}(M, g_\eps)}^2& \leq C(g_\eps) \|\nabla^{g_\eps} w_\eps\|_{L^{2}(M, g_\eps)}^2,
\end{align*}
where $C(g_\eps)$ is chosen to be the smallest constant satisfied by the inequality. Combining the last two inequalities, we derive
\begin{align*}
     \|w_\eps\|_{L^{n^*}(M, g_\eps)}^2 
    & \leq\beta_\eps \|w_\eps\|_{L^{n^*}(M, g_\eps)}^2  + \beta_\eps \Vol_{g_\eps}(\operatorname{supp} f_\eps)^{1/n^*} \|w_\eps\|_{L^{n^*}(M, g_\eps)},
\end{align*}
where $\beta_\eps = C(g_\eps) \|f_\eps\|_{L^{n/2}(M, g_\eps)}$. By \Cref{prop:SobolevConstant}, we have the bound $C(g_\eps) \leq \rho(\eps)^{n} C(g)$. Hence, we have that $ \beta_\eps = C(g_\eps)\frac{1}{c_n} \|R[g_\eps]_-\|_{L^{n/2}(M, g_\eps)} \to 0 $ as $\eps \to 0$, by \Cref{prop:NegScalConvMetricEps}. For $\eps$ sufficiently small, we guarantee that $\beta_\eps<1$, and therefore deduce that
\begin{equation}\label{w_nstar_estimate}
     \|u_\eps-1\|_{L^{n^*}(M, g_\eps)} = \|w_\eps\|_{L^{n^*}(M, g_\eps)} \leq \frac{\beta_\eps}{1-\beta_\eps} \Vol_{g_\eps}(\operatorname{supp} f_\eps)^{1/n^*}.
\end{equation}
Since $\|f_\eps\|_{L^{n/2}(M, g_\eps)} \to 0$ by \Cref{prop:NegScalConvMetricEps} and $\operatorname{supp}f_\eps$ is contained in a fixed large compact set, it follows that $\| \nabla^{g_\eps} u_\eps \|_{L^{2}(M, g_\eps)}$ and $\|u_\eps-1\|_{L^{n^*}(M, g_\eps)}$ approach zero as $\eps \to 0$.
\end{proof}

\begin{lemma}\label{lemma:ADMmassConv}
  Let $g \in C^0 \cap \Wloc^{1,n}$ be a complete, asymptotically flat Riemannian metric on $M^n$, smooth outside a compact set, and assume that $R[g] \geq 0 $ in $\D^\prime(M)$. Let $\gtilde_\eps$ denote the conformally rescaled metric defined in \eqref{eq:DefConfMetricEps}. Then $\gtilde_\eps$ is a complete, asymptotically flat metric with nonnegative scalar curvature, and its ADM mass satisfies 
  $$
  m(\gtilde_\eps) \longrightarrow m(g) \quad \text{ as } \eps \to 0.
  $$
\end{lemma}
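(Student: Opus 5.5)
Completeness, asymptotic flatness and nonnegativity of the scalar curvature come almost for free, so most of the work is the mass formula. Since $g_\eps \equiv g$ outside $K_0$ (\Cref{lemma:FamilyOfMetrics}), $g_\eps$ is smooth, complete and asymptotically flat with the same decay as $g$. In particular $m(g_\eps)=m(g)$ for every $\eps$, because the ADM mass only sees the metric on large coordinate spheres. \Cref{lemma:MiaoConfMetric} gives $u_\eps$ positive with $u_\eps = 1 + A_\eps|x|^{2-n} + O_2(|x|^{1-n})$. Since $u_\eps$ is continuous, positive and tends to $1$, it is bounded above and below by positive constants. Hence $\gtilde_\eps$ is uniformly equivalent to $g_\eps$, which gives completeness. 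The expansion also shows that $\gtilde_\eps - \delta = O_2(|x|^{-\tau})$ in the chart at infinity. Finally, \eqref{eq:ConfScalarCurv} gives $R[\gtilde_\eps]\geq 0$, and $R[\gtilde_\eps]$ is integrable because it equals $u_\eps^{4/(2-n)}R[g]$ outside $K_0$.

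For the mass I would use the standard conformal formula. Plugging $\gtilde_\eps = u_\eps^{4/(n-2)}g_\eps$ into the flux integral and using the expansion of $u_\eps$ gives
\begin{equation*}
m(\gtilde_\eps) = m(g_\eps) + 2A_\eps = m(g) + 2A_\eps
\end{equation*}
(up to a dimensional normalisation depending on the convention). It therefore suffices to show $A_\eps \to 0$. The coefficient $A_\eps$ can be written as a flux, $A_\eps = -\frac{1}{(n-2)\omega_{n-1}}\lim_{r\to\infty}\int_{S_r}\partial_\nu u_\eps\,dS$. Integrating \eqref{eq:PDEnegativeS} over the region inside $S_r$, where $\Delta_{g_\eps}u_\eps = f_\eps u_\eps$ is compactly supported, the divergence theorem gives
\begin{equation*}
(n-2)\omega_{n-1}A_\eps = -\int_M f_\eps u_\eps\,d\mu_{g_\eps} = \frac{1}{c_n}\int_M R[g_\eps]_- u_\eps\,d\mu_{g_\eps}.
\end{equation*}
The flux at infinity is computed with respect to $g_\eps = g$, which is asymptotically Euclidean, so the boundary term converges to the Euclidean one as $r\to\infty$.

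To bound this, split $u_\eps = 1 + w_\eps$ and apply Hölder:
\begin{equation*}
|A_\eps| \le C\|R[g_\eps]_-\|_{L^{n/2}(M,g_\eps)}\Big(\Vol_{g_\eps}(K_0)^{\frac{n+2}{2n}} + \Vol_{g_\eps}(K_0)^{1/n}\,\|w_\eps\|_{L^{n^*}(M,g_\eps)}\Big).
\end{equation*}
Here $n/2$, $n^*$ and the remaining exponent on $K_0$ are conjugate. The volume factors are bounded uniformly in $\eps$ by \Cref{lemma:bilinearmetric}. \Cref{prop:NegScalConvMetricEps} (with $p=n$) gives $\|R[g_\eps]_-\|_{L^{n/2}(M,g_\eps)}\to 0$, and \Cref{prop:ConvGraduetc} bounds $\|w_\eps\|_{L^{n^*}}$, indeed sending it to $0$. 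Hence $A_\eps\to 0$ and $m(\gtilde_\eps)\to m(g)$.

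The main obstacle is justifying the mass formula and the flux identity at infinity. This requires the asymptotics $w_\eps = A_\eps|x|^{2-n} + O_2(|x|^{1-n})$ to be sharp enough that the cross terms in the flux integral vanish as $r\to\infty$. It also needs $\tau>(n-2)/2$ so that $m(g_\eps)$ is well defined. Both points come from \Cref{lemma:MiaoConfMetric} together with the smoothness of $g$ outside $K_0$.
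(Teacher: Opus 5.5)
Your proposal is correct and follows the paper's route: Miao's relation $m(\gtilde_\eps)=m(g_\eps)+2A_\eps$ with $m(g_\eps)=m(g)$, then $A_\eps\to 0$ via \Cref{prop:NegScalConvMetricEps} and \Cref{prop:ConvGraduetc}. The only variation is that you integrate \eqref{eq:PDEnegativeS} against $1$ rather than multiplying by $u_\eps$, giving $(n-2)\omega_{n-1}A_\eps=\frac{1}{c_n}\int_M R[g_\eps]_-u_\eps\,d\mu_{g_\eps}$; this needs only a uniform bound on $\|u_\eps-1\|_{L^{n^*}}$ instead of the gradient decay, and it is the same form the paper itself uses later in \eqref{eq:estimate_mass}. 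One harmless slip: the H\"older volume exponents should be $\frac{n-2}{n}$ and $\frac{n-2}{2n}$, not $\frac{n+2}{2n}$ and $\frac{1}{n}$, which changes nothing since $\Vol_{g_\eps}(K_0)$ is bounded uniformly in $\eps$.
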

\begin{proof}
    By \eqref{eq:ConfScalarCurv}, the scalar curvature $R[\gtilde_\eps]$ is nonnegative. Note that $g_\eps$ is complete for $\eps>0$, since $g$ is complete as a metric space by hypothesis. Moreover, because $u_\eps \in C^2$, $u_\eps>0$, and $\lim_{x \to \infty} u_\eps(x) = 1$, the conformally rescaled metrics $\gtilde_\eps$ are complete. Additionally, \Cref{lemma:MiaoConfMetric} shows that $\gtilde_\eps$ is asymptotically flat.
    
    As described in \cite[Lemma 4.2]{Miao2002}, a standard calculation reveals that the masses of $(M, \gtilde_\eps)$ and $(M, g_\eps)$ are related according to the following
    \begin{equation}
        m(\gtilde_\eps) = m({g_\eps}) + 2 A_\eps,
    \end{equation}
where $A_\eps$ is given by the expansion $u_\eps = 1 + \frac{A_\eps}{|x|^{n-2}} + O_2\left(|x|^{1-n}\right)$ (see \Cref{lemma:MiaoConfMetric}). Multiplying \eqref{eq:PDEnegativeS} by $u_\eps$, integrating by parts, and using the decay rate of $u_\eps$, we have that
\begin{equation}
    (2-n) \omega_{n-1} A_\eps = \int_M |\nabla^{g_\eps} u_\eps|^2 - \frac{1}{c_n}R[g_\eps]_- u_\eps^2 \,\dgeps,
\end{equation}
where $\omega_{n-1}$ is the area of the $n-1$ dimensional unit sphere in $\RR^n$. It follows that
\begin{equation}\label{eq:mass_formula}
     m(\gtilde_\eps) = m({g_\eps}) - \frac{2}{(n-2)\omega_{n-1}} \int_M |\nabla^{g_\eps} u_\eps|^2 - \frac{1}{c_n}R[g_\eps]_- u_\eps^2\,\dgeps.
\end{equation}   
Then, it remains to show that the last term on the right-hand side goes to zero as $\eps \to 0$. Let $K$ and $K_0$ be the compact sets from \Cref{lemma:FamilyOfMetrics}. By H\"{o}lder's inequality, we have
\begin{equation}
 \left| \int_M R[g_\eps]_- u_\eps^2 \,\dgeps \right| = \left| \int_{K_0} R[g_\eps]_- u_\eps^2\,\dgeps\right| \leq \|R[g_\eps]_-\|_{L^{n/2}(K_0, g_\eps)} \|u_\eps\|_{L^{n^*}(K_0, g_\eps)}^2.   
\end{equation}
Furthermore, notice that
\begin{align*}
    \|u_\eps\|_{L^{n^*}(K_0, g_\eps)} & \leq \|1\| _{L^{n^*}(K_0, g_\eps)} + \|u_\eps-1\|_{L^{n^*}(K_0, g_\eps)}\\
    & \leq \Vol_{g_\eps}(K_0)^{1/n^*} + \|u_\eps-1\|_{L^{n^*}(M, g_\eps)},
\end{align*}
which approaches $\Vol_{g}(K_0)^{1/n^*}$ as $\eps \to 0$, by \Cref{prop:ConvGraduetc}.
Clearly, this volume is finite and \Cref{prop:NegScalConvMetricEps} leads to 
\begin{equation}
\int_M R[g_\eps]_- u_\eps^2\,\dgeps \longrightarrow 0 \text{ as } \eps \to 0,    
\end{equation}
and, by \Cref{prop:ConvGraduetc},
\begin{equation}
\int_M |\nabla^{g_\eps}u_\eps|^2\,\dgeps \longrightarrow 0 \text{ as } \eps \to 0.   
\end{equation}
As a result, since $m(g_\eps)=m(g)$, we conclude that
\begin{equation}
\lim_{\eps\to 0} m(\gtilde_\eps) = \lim_{\eps\to 0} m(g_\eps) = m(g).
\end{equation}
\end{proof}

\begin{theorem}\label{theo:PMT_ineq}
    Let $M^n$, $n\geq 3 $, be a smooth manifold endowed with a complete, asymptotically flat Riemannian metric $g \in C^0\cap \Wloc^{1,n}$, smooth outside a compact set. If $R[g]\geq0$ in $\D^\prime$, then $m(g)\geq0$.
\end{theorem}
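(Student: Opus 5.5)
The plan is to assemble the pieces already established in \Cref{section:ScalarCurvature} and the first part of \Cref{section:PMT}, and then pass to the limit. We take the smooth regularisations $g_\eps$ from \Cref{lemma:FamilyOfMetrics} with $k=1$ and $p=n$. These coincide with $g$ outside a fixed compact set $K_0$, so they are smooth, complete and asymptotically flat, and $m(g_\eps)=m(g)$ for every $\eps$. Since $R[g]\geq 0$ in $\D^\prime$ and $g\in C^0\cap\Wloc^{1,n}$, \Cref{prop:NegScalConv} with $p=n$ gives $\|R[g_\eps]_-\|_{L^{n/2}(M,g)}\to 0$. \Cref{prop:NegScalConvMetricEps} transfers this to the norm taken with respect to $g_\eps$.

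Next we verify the smallness hypothesis of \Cref{lemma:MiaoConfMetric} for $f_\eps=-\frac{1}{c_n}R[g_\eps]_-$, which is compactly supported in $K_0$. The threshold is $\eps_0(g_\eps)=1/C(g_\eps)$, and by \Cref{prop:SobolevConstant} we have $C(g_\eps)\leq \rho(\eps)^n C(g)$. Because $\rho(\eps)\to 1$ by \Cref{lemma:bilinearmetric}, these Sobolev constants are uniformly bounded. Condition \eqref{eq:exist_sol} therefore holds for all sufficiently small $\eps$, and we obtain positive solutions $u_\eps$ with expansion $u_\eps = 1 + A_\eps|x|^{2-n}+O_2(|x|^{1-n})$. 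With these we form $\gtilde_\eps=u_\eps^{4/(n-2)}g_\eps$. By \eqref{eq:ConfScalarCurv}, the scalar curvature $R[\gtilde_\eps]=u_\eps^{\frac{4}{2-n}}R[g_\eps]_+$ is nonnegative. It is also integrable, since it is compactly supported modulo the end where $g$ is smooth and $R[g]\in L^1$.

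We then apply the smooth positive mass theorem, \Cref{theo:OriginalPMT}, to each complete, smooth, asymptotically flat $(M,\gtilde_\eps)$. This is where completeness and asymptotic flatness of $\gtilde_\eps$ are needed, and \Cref{lemma:ADMmassConv} supplies both. The theorem gives $m(\gtilde_\eps)\geq 0$. Finally, \Cref{lemma:ADMmassConv} yields $m(\gtilde_\eps)\to m(g)$, so $m(g)=\lim_{\eps\to0}m(\gtilde_\eps)\geq 0$.

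The one point requiring care is that \Cref{theo:OriginalPMT} is formulated for smooth metrics, whereas $u_\eps$ is only asserted to be $C^2$ by \Cref{lemma:MiaoConfMetric}. I would remedy this with elliptic regularity. Because $g_\eps$ is smooth, $R[g_\eps]_-$ is Lipschitz, so $u_\eps\in C^{2,\alpha}$ and $\gtilde_\eps$ is $C^{2,\alpha}$. The positive mass theorem holds at that regularity, for instance by the $\Wloc^{2,p}$, $p>n$, setting mentioned after \Cref{theo:OriginalPMT}. Alternatively, one can replace $R[g_\eps]_-$ by a smooth compactly supported function $\phi_\eps\geq R[g_\eps]_-$ with $\|\phi_\eps - R[g_\eps]_-\|_{L^{n/2}}$ arbitrarily small, which keeps all the estimates intact. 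Either way, the heavy analytic input, namely the Friedrichs-lemma control in \Cref{prop:NegScalConv}, has already been established, so the proof reduces to this chaining.
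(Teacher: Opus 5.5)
Your proposal is correct and is essentially the paper's proof: the paper also chains \Cref{lemma:FamilyOfMetrics}, \Cref{prop:NegScalConv}, \Cref{prop:SobolevConstant}, \Cref{lemma:MiaoConfMetric} and \Cref{lemma:ADMmassConv}, and then concludes from $m(\gtilde_\eps)\geq 0$ together with $m(\gtilde_\eps)\to m(g)$. Your observation that $\gtilde_\eps$ is a priori only $C^2$ (indeed $C^{2,\alpha}$, but not smooth) addresses a point the paper passes over, and of your two fixes the smooth compactly supported majorant $\phi_\eps\geq R[g_\eps]_-$ with small $L^{n/2}$ norm is the cleaner one, because the $\Wloc^{2,p}$ remark in the introduction concerns Witten's spin argument and does not by itself cover the non-spin case.
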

\begin{proof}
    The statement is obtained by applying the smooth positive mass theorem, \Cref{theo:OriginalPMT}, together with \Cref{lemma:ADMmassConv}. In particular, since $m(\gtilde_\eps)\geq 0$, it follows $m(g)\geq 0$.
\end{proof}
\subsection{Rigidity case}

In this section, we prove the rigidity case for the ADM mass, i.e., that $m(g) = 0$ implies that $(M^n, d_g)$ is isometric to $(\RR^n, d_\delta)$ as a metric space. Moreover, if $g \in W^{1,p}_\loc$, $p>n$, then there exists a $C^{1,1-\frac{n}{p}}$ Riemannian isometry between $(M, g)$ and $(\RR^n, \delta)$. The proof follows the usual blueprint of first proving scalar flatness and Ricci flatness and then applying volume comparison arguments. The Riemannian isometry follows from a low-regularity Myers-Steenrod theorem.

The proofs of scalar flatness and Ricci flatness rely crucially on the improved estimates for the regularised scalar curvature obtained in \Cref{prop:NegScalConv} and on the technical \Cref{lemma:ScalarLowerBound}. Both proofs proceed by contradiction, using conformal transformations to construct a family of metrics satisfying the assumptions of the smooth positive mass theorem but having negative ADM mass. For the Ricci flatness proof, the argument is inspired by Jiang-Sheng-Zhang \cite[Lemma 4.1]{Jiang2022}, and crucially uses \Cref{prop:ConvofRs} to conclude that $R[g_\eps] \to 0$ in $L^1(M)$ if $R[g]=0$. Together with a careful analysis of the conformal operator, this convergence yields Ricci flatness globally, rather than only outside a compact set, as obtained in \cite[Lemma 4.1]{Jiang2022}. From Ricci flatness to isometry, we use the equivalence between distributional Ricci curvature lower bounds and $\RCD$-spaces established in \cite{mondino2025}, together with the volume comparison argument for $\RCD$-spaces from \cite{Jiang2022}.

In what follows, we prove the technical \Cref{lemma:ScalarLowerBound}, and then proceed with the rigidity proof. We also introduce $\RCD$-spaces after proving Ricci flatness.

\begin{lemma}\label{lemma:ScalarLowerBound}
    Let $g \in C^0 \cap \Wloc^{1,n}$ be a complete, asymptotically flat Riemannian metric on $M^n$, smooth outside a compact set. Let $g_{\eps,t} = g_\eps + t q$, where $q$ is a compactly supported $(0,2)$ symmetric tensor $q$ and $g_\eps$ is the mollified metric in \Cref{lemma:FamilyOfMetrics}. Consider $C^1$ functions $w,v \colon M \to \RR$ with $dw, dv \in L^2(M)$ satisfying $w, v \in L^{\frac{2n}{n-2}}(M)$ and $w,v = O_1(|x|^{-\tau})$, where $\tau>(n-2)/2$. Then, there exist $\eps_0,t_0>0$ and constants $C_0, C_1,C_2>0$ independent of $\eps$ and $t$ such that for all $\eps \in (0, \eps_0)$ and $t \in [0, t_0)$, we have 
    \begin{equation}\label{eq:linear_control}
        \left| \int  R[g_{\eps,t}] w \, d\mu_{\eps,t} \right| \leq \left( C_0  + C_1 t +C_2 t^2 \right) \|\nabla^{g_{\eps,t}} w\|_{L^2(M, g_{\eps,t})} ,
    \end{equation}
    and
     \begin{equation} \label{eq:quadratic_control}
        \left| \int  R[g_{\eps,t}] w v \, d\mu_{\eps,t} \right| \leq \left( C_0  + C_1 t +C_2 t^2 \right) \|\nabla^{g_{\eps,t}} w\|_{L^2(M, g_{\eps,t})} \|\nabla^{g_{\eps,t}} v\|_{L^2(M, g_{\eps,t})},
    \end{equation}
   
    Moreover, if $R[g] = 0$, then, after possibly decreasing $\eps_0$ and $t_0$, there exists $A(\eps)\geq0$ satisfying $A(\eps) \to 0$ as $\eps \to 0$, such that $C_0$ may be replaced by $A(\eps)$ in both estimates above.
\end{lemma}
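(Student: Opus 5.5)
The plan is to work entirely with the divergence-form expression \eqref{eq:CoordScalarC_W1n}, $R = \partial_k V^k + F$, and to move the derivative onto the test functions, so that only $\partial g_{\eps,t}$ appears and never $\partial^2 g_{\eps,t}$. Fix the compact set $K_0$ of \Cref{lemma:FamilyOfMetrics} and enlarge it so that it also contains $\supp q$. Split $M = K_0 \cup (M\setminus K_0)$ with a partition of unity subordinate to finitely many charts covering $K_0$ and to the asymptotic chart.

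\textbf{The asymptotic region.} On $M\setminus K_0$ we have $g_{\eps,t}=g$, which is smooth and asymptotically flat, so $R[g]=O(|x|^{-\tau-2})$. Since $\tau>(n-2)/2$, this gives $R[g]\in L^{2n/(n+2)}\cap L^{n/2}$ there. For the linear estimate, H\"older gives $|\int R w|\le \|R\|_{L^{2n/(n+2)}}\|w\|_{L^{n^*}}$. For the quadratic estimate, $|\int R wv|\le \|R\|_{L^{n/2}}\|w\|_{L^{n^*}}\|v\|_{L^{n^*}}$. In both cases the Sobolev inequality \eqref{eq:SobolevConstant} converts $L^{n^*}$ norms into $\|\nabla w\|_{L^2}$ and $\|\nabla v\|_{L^2}$.

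\textbf{The compact region.} In each chart meeting $K_0$, write $\int R\, w\,\eta\, d\mu = \int \big(-V^k\partial_k(w\eta\sqrt{\det}) + F w\eta\sqrt{\det}\big)\,dx$. The factors have the following integrability.
\begin{itemize}
\item $V_{g_{\eps,t}}\in L^n$ and $F_{g_{\eps,t}}\in L^{n/2}$, because $g_{\eps,t}\in C^0\cap W^{1,n}$ with $g_{\eps,t}^{-1}$ continuous.
\item The test factors satisfy $\partial w\in L^2$ and $w\in L^{n^*}$, and the same holds for $v$.
\end{itemize}
The exponent bookkeeping then closes. For the linear estimate, $V\partial w$ is integrable because $V\in L^n\subset L^2$ on compacts, and $Fw$ is integrable because $\tfrac2n+\tfrac{n-2}{2n}<1$. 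For the quadratic estimate, $V\,\partial w\, v$ is integrable because $\tfrac1n+\tfrac12+\tfrac{n-2}{2n}=1$, and $Fwv$ because $\tfrac2n+\tfrac{n-2}{n}=1$. Lower-order terms containing $\partial\sqrt{\det}$ or $\partial\eta$ are handled in the same way. Finally, Sobolev converts every $L^{n^*}$ norm into a gradient norm.

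\textbf{Uniformity in $\eps$ and $t$.} The norms $\|V_{g_\eps}\|_{L^n(K_0)}$ and $\|F_{g_\eps}\|_{L^{n/2}(K_0)}$ are bounded uniformly in $\eps$, since $g_\eps\to g$ and $g_\eps^{-1}\to g^{-1}$ in $W^{1,n}_\loc\cap C^0_\loc$ (\Cref{col:inv_converg}). For the $t$-dependence, take $t_0$ small so that $g_{\eps,t}$ is uniformly equivalent to $g$. Then expand $g_{\eps,t}^{-1}=g_\eps^{-1}-t\,g_\eps^{-1}qg_\eps^{-1}+O(t^2)$, with the remainder uniform in $C^0$ because $q$ is smooth. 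Substituting into $V$, which is linear in $\partial g$, and into $F$, which is quadratic in $\partial g$, gives
\[
V_{g_{\eps,t}}=V_{g_\eps}+tV_1+t^2V_2,
\]
where $\|V_i\|_{L^n}$ is bounded uniformly, and the analogous expansion for $F$ in $L^{n/2}$. This produces the constants $C_0$, $C_1$, $C_2$.

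It remains to make the Sobolev constants of $g_{\eps,t}$, and the comparison between $\nabla^{g}$-norms and $\nabla^{g_{\eps,t}}$-norms, uniform. This follows by the argument of \Cref{prop:SobolevConstant} with $\rho(\eps)$ replaced by a bound $\rho(\eps,t)$ satisfying $\rho(\eps,t)\le 2$ for small $\eps$ and $t$.

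\textbf{The case $R[g]=0$.} Write $R[g_{\eps,t}]=R[g_\eps]+\big(R[g_{\eps,t}]-R[g_\eps]\big)$. The second piece is handled in divergence form through the $t$-expansion above, and it contributes only to $C_1t+C_2t^2$. For the first piece:
\begin{itemize}
\item Outside $K_0$ we have $R[g_\eps]=R[g]=0$.
\item On $K_0$, \Cref{prop:ConvofRs}, applied as in the proof of \Cref{prop:NegScalConv} with $R[g]\star_K\rho_\eps=0$, gives $\|R[g_\eps]\|_{L^{n/2}(K_0)}\to0$.
\end{itemize}
Since $L^{n/2}(K_0)\subset L^{2n/(n+2)}(K_0)$, H\"older and Sobolev bound the contribution of $R[g_\eps]$ by $A(\eps)=C\|R[g_\eps]\|_{L^{n/2}(K_0)}$ in both estimates.

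The main obstacle I expect is the bookkeeping of the $t$-expansion of $V$ and $F$ at only $W^{1,n}$ regularity. Each $t$-coefficient must be a product of continuous factors, built from $g_\eps^{\pm1}$ and $q$, with at most the allowed number of $L^n$ factors $\partial g_\eps$ and smooth factors $\partial q$. One also has to check that the $O(t^2)$ remainder of the inverse is controlled in $C^0\cap W^{1,n}$ uniformly in $\eps$.
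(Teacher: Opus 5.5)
Your proposal is correct, and its core matches the paper's proof:
\begin{itemize}
\item write the scalar curvature in divergence form $\partial_k V^k + F$ and integrate by parts so that only $\partial g$ appears;
\item close the exponents with H\"older ($\tfrac1n+\tfrac12+\tfrac{n-2}{2n}=1$, $\tfrac2n+\tfrac{n-2}{n}=1$) and the Sobolev inequality;
\item use the uniform $C^0\cap W^{1,n}$ bounds on $g_\eps$;
\item obtain $A(\eps)$ from \Cref{prop:ConvofRs}.
\end{itemize}

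You differ from the paper in how you treat $t$ and the asymptotic end.
\begin{itemize}
\item \emph{The paper's route.} It uses the variation formula $R[g_{\eps,t}]=R[g_\eps]+t\,DR_{g_\eps}(q)+t^2B_{\eps,t}(q)$ and estimates each geometric piece separately: $\operatorname{div}\operatorname{div}q$ and $\Delta\tr q$ by integration by parts, $\langle\Ric[g_\eps],q\rangle$ through a divergence-form expression for the Ricci tensor, and the pieces of $B_{\eps,t}$. It then absorbs the density $d\mu_{g_{\eps,t}}/d\mu_{g_\eps}$ into the test function. On the end it integrates by parts globally, killing the boundary term with $V=O(|x|^{-1-\tau})$ and using $V\in L^2\cap L^n(M)$ and $F\in L^{2n/(n+2)}\cap L^{n/2}(M)$.
\item \emph{Your route.} You apply the divergence form directly to $g_{\eps,t}$, with $\sqrt{\det g_{\eps,t}}$ inside the derivative. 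On the end you use the pointwise decay $R[g]=O(|x|^{-\tau-2})$. You control $R[g_{\eps,t}]-R[g_\eps]$ through $\|V_{g_{\eps,t}}-V_{g_\eps}\|_{L^n}+\|F_{g_{\eps,t}}-F_{g_\eps}\|_{L^{n/2}}\le Ct$.
\end{itemize}

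Your route is more elementary. It never meets $\Ric[g_\eps]$, which converges only distributionally, nor the loosely specified remainder $B_{\eps,t}$. It also works verbatim when $q$ is only uniformly bounded in $C^0\cap W^{1,n}$, as is the case for $q_\eps$ in \Cref{prop:Ricciflat}. The paper's route has one advantage: the same variation formula isolates the term $-t\langle\Ric[g_\eps],q_\eps\rangle$, which is exactly what drives the contradiction in \Cref{prop:Ricciflat}.

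The obstacle you anticipate is milder than you fear. Since $\partial(g^{-1})=-g^{-1}(\partial g)g^{-1}$, both $V$ and $F$ are polynomials in the continuous entries of $g^{-1}$ and the $L^n$ entries of $\partial g$. The $O(t)$ bound therefore follows from $\|g_{\eps,t}^{-1}-g_\eps^{-1}\|_{C^0}\le Ct$ and $\partial g_{\eps,t}-\partial g_\eps=t\,\partial q$ alone. No $W^{1,n}$ control of the inverse's remainder is needed, and the $t^2$ term can simply be absorbed. Your ``$V_2$'' necessarily depends on $t$, which is harmless.
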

    \begin{proof}
        
     Recall the following formula \cite[Exercise 1.18]{L:19} (or \cite[Proposition 4]{BrendleMarques2011}),
    \begin{equation} \label{eq:VarScalarCurv_complete2}
        R[g_{\eps,t}] = R[g_\eps] + t\restr{DR}{g_\eps}(q) + t^2 B_{\eps,t}(q),
    \end{equation}
    where
    \begin{equation}\label{eq:VarScalarCurv2}
        \restr{DR}{g_\eps}(q) = \operatorname{div}_{g_\eps}(\operatorname{div}_{g_\eps}(q)) - \Delta_{g_\eps} \tr_{g_\eps}(q)  -  \langle \Ric[g_\eps] , q \rangle_{g_\eps},
    \end{equation}
    and $B_{\eps,t}(q)$ is the sum of a divergence of a compactly supported vector field $\operatorname{div}_{g_\eps} U_{\eps,t}$ where $U_{\eps,t}$ is a contraction of $q*\nabla^{g_\eps} q$ with $g^{-1}_\eps$ and $g^{-1}_{\eps,t}$, a term quadratic in $\nabla^{g_\eps} q$, and a term $\Ric[g_\eps]*q^2$ (with similar contraction).

    By a partition of unity argument, it suffices to prove the claim for the case $M = \RR^n$. We also assume that $g_\eps = g$ outside a compact set as constructed in \Cref{lemma:FamilyOfMetrics}. We write $R[g_\eps] = \partial_k V^k[g_\eps] + Q[g_\eps]$, as in \eqref{eq:dist_scalar_W1n}. Although $w$ and $v$ are not necessarily compactly supported, integration by parts yields
    \begin{equation}
        \int R[g_\eps] w v \,d\mu_{g_\eps} = \int -V^k[g_\eps] \partial_k(wv \sqrt{|g_\eps|}) + Q[g_\eps]  wv \sqrt{|g_\eps|} \, dx,
    \end{equation}
    where the boundary term vanishes at infinity due to the sufficient decay rate of $V[g_\eps]$ and $w,v$, i.e. $V[g_\eps] = O(|x|^{-1-\tau})$ and $w, v = O_1(|x|^{-\tau})$, where $\tau>(n-2)/2$.

    Note that $Q[g_\eps] \in L^{p}(M)$ for all $p \in [1,n/2]$, since $Q[g_\eps] \approx (\partial g_\eps)^2$, $\partial g_\eps \in L^{n}_\loc$ and $(\partial g_\eps)^2 = O(|x|^{-2-2\tau})=O(|x|^{-n-\delta})$ for some $\delta>0$. We can also conclude that $V^k[g_\eps] \in L^{p}(M)$ for all $p \in [2,n]$, using the fact that $V^k[g_\eps] \approx \partial g_\eps$, $\partial g_\eps \in L^{n}_\loc$ and $\partial g_\eps = O(|x|^{-1-\tau})=O(|x|^{-n/2-\delta})$ for some $\delta>0$.

    Then, for any $Q$ such that $Q \in L^{p}(M)$  for all $p \in [1,n/2]$, and $w,v \in C^1$ functions satisfying the hypothesis of the lemma, we have that
    \begin{align}
    \|Q w v \|_{L^1}& \leq \|Q w\|_{L^{\frac{2n}{n+2}}}  \|v \|_{L^{n^*}} \leq \|Q\|_{L^{\frac{n}{2}}}  \|w \|_{L^{n^*}} \|v \|_{L^{n^*}}
     \leq C(h)^2 \|Q\|_{L^\frac{n}{2}}  \|d w \|_{L^{2}}  \|d v \|_{L^{2}},
    \end{align}
    where $n^* = 2n/(n-2)$, and we have applied the Sobolev inequality \eqref{eq:SobolevConstant0} twice with respect to the background metric $h$ to obtain $C(h)^2$. Similarly for the linear case, we have
    \begin{align}
    \|Q w\|_{L^1}& \leq C(h) \|Q\|_{L^{\frac{2n}{n+2}}}      \|d w \|_{L^{2}}.
    \end{align}

    For any $V$ such that $V \in L^p(M)$ for all $p \in [2,n]$, we have
    \begin{align*}
        \|V \partial(w v)\|_{L^1} & \leq \|V v\|_{L^2} \| \partial w \|_{L^2} + \|V w\|_{L^2} \| \partial v\|_{L^2}\\
        & \leq \|V\|_{L^n} \left( \|v \|_{L^{n^*}}  \|\partial w \|_{L^2} +  \|w \|_{L^{n^*}}  \|\partial v\|_{L^2}\right)\\
            & \leq 2 C(h) \|V\|_{L^n} \left(\| d w \|_{L^2} \|dv\|_{L^2}\right),
    \end{align*}
    and
    \begin{align*}
        \|V w v \partial(\sqrt{|g_\eps|})\|_{L^1} 
        &  \leq \|Vv \|_{L^2}  \|\partial(\sqrt{|g_\eps|})\|_{L^n} \|w \|_{L^{n^*}} \\
        &  \leq \|V\|_{L^n}  \|\partial(\sqrt{|g_\eps|})\|_{L^n} \|w \|_{L^{n^*}} \|v \|_{L^{n^*}} \\ 
        & \leq C(h)^2\|V\|_{L^n}  \|\partial(\sqrt{|g_\eps|})\|_{L^n}  \left(\|d w \|_{L^2} \|d v\|_{L^2}\right),
    \end{align*}
    where we have applied the Sobolev inequality \eqref{eq:SobolevConstant0} again. Note that $\|\partial(\sqrt{|g_\eps|})\|_{L^n}$ can be bounded independently of $\eps$ for sufficiently small $\eps$ since $g_\eps = g$ outside a compact set and $g_\eps \to g$ in $C^0_\loc \cap W^{1,n}_\loc$. For the linear case, the preceding estimate yields
    \begin{align}
        \|V \partial w\|_{L^1}  \leq \|V\|_{L^2} \| d w \|_{L^2}\, \text { and }\,
        \|V w \partial(\sqrt{|g_\eps|})\|_{L^1} 
          \leq C(h) \|V\|_{L^2}  \|\partial(\sqrt{|g_\eps|})\|_{L^n} \|d w \|_{L^2}.
    \end{align}
    Therefore, from the above computations but with respect to the norm induced by $g_\eps$, we obtain
    \begin{align}\label{eq:Scalar_UpperBound1}
           \left| \int  R[g_{\eps}] w v \, d\mu_{\eps}\right| \leq  C(\eps)  \|\nabla^{g_\eps} w\|_{L^2(M, g_\eps)} \|\nabla^{g_\eps} v\|_{L^2(M,g_\eps)},
    \end{align}
    and
    \begin{align}\label{eq:Scalar_UpperBound2}
        \left| \int  R[g_{\eps}] w \, d\mu_{\eps}\right| \leq  C(\eps)  \|\nabla^{g_\eps} w\|_{L^2(M, g_\eps)},
    \end{align}
    where
    \begin{equation}
         C(\eps) \colonequals C \left(\sum_{k=1}^n\|V^k[g_\eps]\|_{L^n\cap L^2(M, g_\eps)} +  \|Q[g_\eps]\|_{L^\frac{n}{2} \cap L^\frac{2n}{n+2}(M, g_\eps)}\right),
    \end{equation}
    for some constant $C>0$ uniformly in $\eps$, for sufficiently small $\eps$. In particular, since $g_\eps \to g$ in $C^0_\loc \cap W^{1,n}_{\loc}$ and $g_\eps = g$ outside a compact set, the norms of $V^k[g_\eps]$ and $Q[g_\eps]$ are uniformly bounded in $\eps$, then there exists a constant $C_0$ uniformly in $\eps$ such that $C(\eps)\leq C_0$.

     Since $q$ is compactly supported, we can integrate by parts the divergence term in \eqref{eq:VarScalarCurv2}, which yields 
    \begin{equation}
        \int_{M}  \operatorname{div}_{g_\eps}(\operatorname{div}_{g_\eps}(q))  w v  \,d\mu_{g_\eps}  =  - \int_{M}  \langle \operatorname{div}_{g_\eps}(q), w \nabla^{g_\eps} v + v \nabla^{g_\eps} w  \rangle_{g_\eps} \,d\mu_{g_\eps},
    \end{equation}
    the second term can be estimated by
    \begin{align*}
        \left|\int_{M}  \langle \operatorname{div}_{g_\eps}(q), v \nabla^{g_\eps} w  \rangle_{g_\eps} \,d\mu_{g_\eps}\right|  & \leq \|\operatorname{div}_{g_\eps}(q)\|_{L^n(M, {g_\eps})} \left( \|v\|_{L^{n^*}(M, {g_\eps})} \|  \nabla^{g_\eps} w\|_{L^2(M, {g_\eps})}  \right)\\
        & \leq  C(g_\eps) \|\operatorname{div}_{g_\eps}(q)\|_{L^n(M, {g_\eps})}  \|  \nabla^{g_\eps} w\|_{L^2(M, {g_\eps})}  \|  \nabla^{g_\eps} v\|_{L^2(M, {g_\eps})},
    \end{align*}
    then
    \begin{equation}\label{eq:Claim1_divterm}
        \left|\int_{M}  \operatorname{div}_{g_\eps}(\operatorname{div}_{g_\eps}(q))  w v  \,d\mu_{g_\eps} \right| \leq 2 C(g_\eps) \|\operatorname{div}_{g_\eps}(q)\|_{L^n(M, {g_\eps})}  \|  \nabla^{g_\eps} w\|_{L^2(M, {g_\eps})}    \|  \nabla^{g_\eps} v\|_{L^2(M, {g_\eps})} ,
    \end{equation}
    where $C(g_\eps)$ is the Sobolev constant of $g_\eps$. 

    Analogously, for the Laplacian term in \eqref{eq:VarScalarCurv2}, we have that 
    \begin{align}
        \left| \int_{M}  \Delta_{g_\eps} \tr_{g_\eps}(q) w v \,d\mu_{g_\eps}\right| \leq & 2 C(g_\eps) \|\nabla^{g_\eps} \tr_{g_\eps}(q)\|_{L^n(M, {g_\eps})}  \|  \nabla^{g_\eps} w\|_{L^2(M, {g_\eps})}    \|  \nabla^{g_\eps} v\|_{L^2(M, {g_\eps})}.  \label{eq:Claim1_Lapterm}
    \end{align}
    Using a similar formula for the distributional Ricci as for the distributional scalar curvature \eqref{eq:dist_scalar_W1n} and estimating similarly as for the scalar curvature, we have 
    \begin{align}\label{eq:Claim1_Ricciterm}
        \left|\int_{M}  \langle \Ric[g_\eps] , w v q \rangle_{g_\eps}   \,d\mu_{g_\eps}\right|  \leq C  \| \nabla^{g_{\eps}} w\|_{L^2(M, g_\eps)} \|  \nabla^{g_{\eps}} v\|_{L^2(M, g_\eps)},
    \end{align}
    for some constant $C$ uniformly in $\eps$, for $\eps$ sufficiently small, and depending on $q$.

    The Ricci term in $B_{\eps,t}(q)$ \eqref{eq:VarScalarCurv_complete2}, can be treated exactly as above, but using $q^2$ instead. The quadratic terms $\nabla^{g_\eps} q$ in $B_{\eps,t}(q)$ \eqref{eq:VarScalarCurv_complete2} can be estimated by the Sobolev inequality \eqref{eq:SobolevConstant} as 
    \begin{align*}
    \left| \int_{M}  (\nabla^{g_\eps} q*\nabla^{g_\eps} q) w v \,d\mu_{g_\eps}\right|  & \leq \|\nabla^{g_\eps}q * \nabla^{g_\eps} q \|_{L^{\frac{n}{2}}(M, g_\eps)}  \|w\|_{L^{n^*}(M, g_\eps)} \|v\|_{L^{n^*}(M, g_\eps)}  \\
     & \leq C(g_\eps)^2 \|\nabla^{g_\eps}q * \nabla^{g_\eps} q \|_{L^{\frac{n}{2}}(M, g_\eps)} \|\nabla^{g_\eps} w\|_{L^2(M, g_\eps)}\|\nabla^{g_\eps} v\|_{L^2(M, g_\eps)},      
    \end{align*}
    and the estimate for the divergence term in $B_{\eps,t}(q)$ is similar to the estimate \eqref{eq:Claim1_divterm}.

    Using that $g_\eps$ and $g_{\eps,t}$ are uniformly bounded in $\eps$ and $t$, we obtain
    \begin{align}\label{eq:Claim1_Bkterm}
    \left| \int_{M}  B_{\eps,t}(q) w v \,d\mu_{g_\eps}\right|  \leq C  \|\nabla^{g_\eps} w\|_{L^2(M, g_\eps)}\|\nabla^{g_\eps} v\|_{L^2(M, g_\eps)},
    \end{align}
    for some constant $C$ uniformly in $\eps$.

    Now consider instead $w v d\mu_{g_{\eps,t}}/d\mu_{g_{\eps}} = \left(wd\mu_{g_{\eps,t}}/d\mu_{g_{\eps}}\right) v $, and note that
    \begin{align*}
        \|\nabla^{g_\eps} \left(w\frac{d\mu_{g_{\eps,t}}}{d\mu_{g_{\eps}}}\right)\|_{L^2(M, g_\eps)}& \leq \|\frac{d\mu_{g_{\eps,t}}}{d\mu_{g_{\eps}}}\|_{L^\infty} \|\nabla^{g_\eps} w\|_{L^2(M, g_\eps)} + \|w \nabla^{g_\eps} \left(\frac{d\mu_{g_{\eps,t}}}{d\mu_{g_{\eps}}}\right)\|_{L^2(M, g_\eps)}\\
        & \leq \|\frac{d\mu_{g_{\eps,t}}}{d\mu_{g_{\eps}}}\|_{L^\infty} \|\nabla^{g_\eps} w\|_{L^2(M, g_\eps)} + \|w\|_{L^{n^*}(M, g_\eps)} \|\nabla^{g_\eps} \left(\frac{d\mu_{g_{\eps,t}}}{d\mu_{g_{\eps}}}\right)\|_{L^n(M, g_\eps)}\\
        & \leq C \|\nabla^{g_\eps} w\|_{L^2(M, g_\eps)},
    \end{align*}
    where we have used that $g_{\eps,t} \to g$ in $C^0_\loc\cap W^{1,n}_\loc$ as $t \to 0$ and $\eps \to 0$, and the Sobolev inequality with respect to $g_\eps$ to obtain a constant $C>0$ uniformly in $\eps$, for $\eps$ and $t$ sufficiently small. 
    
    In order to obtain the linear estimate, we repeat the above computation with a single test function $w$ in place of the product $wv$. For the terms involving $q$, it requires $L^2$ and $L^{\frac{2n}{n+2}}$ bounds instead of $L^n$ and $L^{\frac{n}{2}}$, respectively. Since these terms are supported in a fixed compact set $\supp q$, the $L^p$ embeddings provide the required estimates. Then, combining \eqref{eq:Scalar_UpperBound1}, \eqref{eq:Claim1_divterm}, \eqref{eq:Claim1_Lapterm}, \eqref{eq:Claim1_Ricciterm}, and \eqref{eq:Claim1_Bkterm} (or the equivalent estimates for the linear case) and choosing $\eps$ and $t$ smaller if necessary, we obtain 
    \begin{equation}\label{eq:Scalar_CaseR0}
        \left| \int  R[g_{\eps,t}] w v \, d\mu_{g_{\eps,t}} \right| \leq \left( C_0  + C_1 t +C_2 t^2 \right) \|\nabla^{g_{\eps,t}} w\|_{L^2(M, g_{\eps,t})}\|\nabla^{g_{\eps,t}} v\|_{L^2(M, g_{\eps,t})},
    \end{equation}
    and
    \begin{equation}\label{eq:Scalar_CaseR1}
        \left| \int  R[g_{\eps,t}] w \, d\mu_{g_{\eps,t}} \right| \leq \left( C_0  + C_1 t +C_2 t^2 \right) \|\nabla^{g_{\eps,t}} w\|_{L^2(M, g_{\eps,t})},
    \end{equation}
    where $C_0, C_1, C_2>0$ are constants independent of $t$ and $\eps$.

    Assume that $R[g]=0$, then we have that $R[g] \star_K \rho_\eps = 0$, where the operation $\star_K$ denotes the convolution only on a compact set where the metric is regularised as in the proof of \Cref{prop:NegScalConv}. Applying H\"{o}lder's inequality and the Sobolev inequality, we derive
    \begin{equation}
        \left|\int R[g_\eps] w v \,d\mu_{g_\eps} \right|\leq C(g_\eps)^2 \|R[g_\eps]\|_{L^{n/2}(M, g_\eps)}  \|\nabla^{g_\eps} w\|_{L^2(M, g_\eps)} \|\nabla^{g_\eps} v\|_{L^2(M,g_\eps)},
    \end{equation}
    and,
    \begin{equation}
        \left|\int R[g_\eps] w \,d\mu_{g_\eps} \right|\leq C(g_\eps) \|R[g_\eps]\|_{L^{\frac{2n}{n+2}}(M, g_\eps)}  \|\nabla^{g_\eps} w\|_{L^2(M, g_\eps)},
    \end{equation}
    where $C(g_\eps)$ is the Sobolev constant of $g_\eps$.

    Since $R[g]\star_K \rho_\eps = 0$ and $R[g_\eps] = 0$ outside a compact set $K_0$ containing $K$, we have that $\|R[g_\eps]\|_{L^p(M, g_\eps)} \to 0$ as $\eps \to 0$, for every $p \in [1,n/2]$, by \Cref{prop:NegScalConv}. Then, we obtain
    \begin{align}\label{eq:Scalar_UpperBound1_small}
           \left| \int  R[g_{\eps}] w v \, d\mu_{\eps}\right| \leq  A(\eps)  \|\nabla^{g_\eps} w\|_{L^2(M, g_\eps)} \|\nabla^{g_\eps} v\|_{L^2(M,g_\eps)},
    \end{align}
    and
        \begin{align} \label{eq:Scalar_UpperBound2_small}
           \left| \int  R[g_{\eps}] w \, d\mu_{\eps}\right| \leq  A(\eps)  \|\nabla^{g_\eps} w\|_{L^2(M, g_\eps)},
    \end{align}
    where
    \begin{equation}
         A(\eps)\colonequals C  \|R[g_\eps]\|_{L^{\frac{n}{2}} \cap L^{\frac{2n}{n+2}} (M, g_\eps)} ,
    \end{equation}
    for some constant $C>0$ uniformly in $\eps$, for $\eps$ sufficiently small, and $A(\eps) \to 0$ as $\eps \to 0$.

    Again, combining \eqref{eq:Scalar_UpperBound1_small}, \eqref{eq:Claim1_divterm}, \eqref{eq:Claim1_Lapterm}, \eqref{eq:Claim1_Ricciterm}, and \eqref{eq:Claim1_Bkterm} (or the equivalent estimates for the linear case) and choosing $\eps$ and $t$ smaller if necessary, we have
    \begin{equation}
        \left|\int  R[g_{\eps,t}] w v \, d\mu_{g_{\eps,t}} \right|\leq \left( A(\eps)  + C_1 t +C_2 t^2 \right) \|\nabla^{g_{\eps,t}} w\|_{L^2(M, g_{\eps,t})}\|\nabla^{g_{\eps,t}} v\|_{L^2(M, g_{\eps,t})},
    \end{equation}
    and
    \begin{equation}
        \left|\int  R[g_{\eps,t}] w \, d\mu_{g_{\eps,t}} \right|\leq \left( A(\eps)  + C_1 t +C_2 t^2 \right) \|\nabla^{g_{\eps,t}} w\|_{L^2(M, g_{\eps,t})},
    \end{equation}
    where $C_1,C_2>0$ are constants independent of $t$ and $\eps$ and $A(\eps) \to 0$ as $\eps \to 0$.
    \end{proof}

    \begin{proposition} \label{prop:scalar_flat}
         Let $g \in C^0 \cap \Wloc^{1,n}$ be a complete, asymptotically flat Riemannian metric on $M^n$, smooth outside a compact set, and assume that $R[g] \geq 0 $ in $\D^\prime(M)$. If $m(g) = 0$, then $g$ is scalar-flat.
    \end{proposition}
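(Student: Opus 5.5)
Argue by contradiction: assume $R[g]\geq 0$, $m(g)=0$, but $R[g]\neq 0$ in $\D^\prime(M)$. Since $R[g]$ is a nonnegative distribution, it is a nonnegative Radon measure. Hence there are a nonnegative $\phi\in C^\infty_c(M)$ and a relatively compact open set $\Omega$ with $\la R[g],\phi\,d\mu_h\ra =: 2\delta>0$. Conformally deforming $g_\eps$ only by the negative part (as in \Cref{lemma:ADMmassConv}) discards this positive part. Instead we solve on $g_\eps$ the equation that removes \emph{all} of the scalar curvature, or a fixed portion of it, and show that the mass drops by a definite amount.

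\textbf{Step 1 (conformal problem).} Take $f_\eps=\frac{1}{c_n}R[g_\eps]$ and solve $-\Delta_{g_\eps}u_\eps+f_\eps u_\eps=0$ with $u_\eps\to1$ at infinity. Then $\gtilde_\eps=u_\eps^{4/(n-2)}g_\eps$ is scalar flat, hence has nonnegative mass by \Cref{theo:OriginalPMT}. Solvability requires coercivity of $w\mapsto\int|\nabla^{g_\eps}w|^2+f_\eps w^2$ on the completion of $C^\infty_c$ in the $\dot W^{1,2}$ norm. Only the negative part of $f_\eps$ threatens coercivity, and by \Cref{prop:NegScalConvMetricEps} together with \Cref{prop:SobolevConstant} it has small $L^{n/2}$ norm. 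So the Schoen--Yau/Miao argument of \Cref{lemma:MiaoConfMetric} still applies. (Note that $R[g_\eps]=R[g]$ outside $K_0$ is smooth and integrable, so $f_\eps$ has the needed decay, not compact support.)

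\textbf{Step 2 (mass formula).} As in \eqref{eq:mass_formula}, write $w_\eps=u_\eps-1$ and integrate the equation against $u_\eps$ to get
\[
m(\gtilde_\eps)=m(g)-\frac{2}{(n-2)\omega_{n-1}}\int_M |\nabla^{g_\eps}u_\eps|^2+\frac{1}{c_n}R[g_\eps]u_\eps^2\,\dgeps .
\]
Using the equation, this integral equals $\frac1{c_n}\int R[g_\eps]u_\eps\,\dgeps$, a boundary-term-free identity. So it suffices to show $\liminf_{\eps\to0}\int R[g_\eps]u_\eps\,\dgeps>0$; combined with $m(g)=0$ this gives $m(\gtilde_\eps)<0$ for small $\eps$, a contradiction.

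\textbf{Step 3 (main obstacle: lower bound for $\int R[g_\eps]u_\eps$).} Split $u_\eps=1+w_\eps$. For the term $\int R[g_\eps]\,\dgeps$, use \Cref{prop:ConvofRs} ($p=n$): $R[g_\eps]-R[g]\star_K\rho_\eps\to0$ in $L^{n/2}(K_0)$, and $R[g]\star_K\rho_\eps\geq0$. A decomposition $1=\phi+(1-\phi)$, together with nonnegativity of the rest, bounds this integral from below by roughly $\la R[g],\phi\ra-o(1)\geq\delta$. For the term $\int R[g_\eps]w_\eps$, apply the linear estimate \eqref{eq:linear_control} of \Cref{lemma:ScalarLowerBound} with $t=0$. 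This gives the bound $C_0\|\nabla^{g_\eps}w_\eps\|_{L^2}$, so we need $\|\nabla w_\eps\|_{L^2}$ small. That smallness is the delicate point: $R[g_\eps]$ is not small, only its negative part is. To handle this, I would rescale the conformal problem by a parameter, solving with $f_\eps=\frac{s}{c_n}R[g_\eps]\phi$ for small fixed $s>0$ (the negative part still being controlled). Energy estimates as in \Cref{prop:ConvGraduetc} together with \eqref{eq:linear_control} then give $\|\nabla w_\eps\|_{L^2}\leq Cs$. Consequently $\int f_\eps u_\eps\geq s\delta/c_n-Cs^2-o(1)$, which is positive for $s$ small and then $\eps$ small. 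The rescaled metric has scalar curvature $s(1-\phi)R[g_\eps]u_\eps^{-4/(n-2)}$ up to the negative part, which is removed by a further small conformal factor as in \Cref{lemma:ADMmassConv} costing $o(1)$ mass. This yields $m\leq -cs\delta+o(1)<0$, contradicting \Cref{theo:OriginalPMT}.
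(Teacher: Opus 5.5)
Your strategy is essentially the paper's. You argue by contradiction, localise with a nonnegative $\phi$, and conformally remove a small multiple $s\phi$ of the scalar curvature of $g_\eps$. The mass then drops by an amount of order $s$, and the $O(s^2)$ error is controlled by \Cref{lemma:ScalarLowerBound} at $t=0$ together with the energy estimate. There are two differences. First, you bound $\int_M R[g_\eps]\phi\,\dgeps$ from below using \Cref{prop:ConvofRs} and $R[g]\star_K\rho_\eps\geq 0$, with $d\mu_{g_\eps}\geq c\,d\mu_h$ on $\supp\phi$. This is valid, and a slightly cleaner alternative to the paper's extension of distributional convergence to $C^0\cap W^{1,n}_{\mathrm{loc}}$ test densities. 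Second, you deform in two steps, whereas the paper solves the single equation $-c_n\Delta_{g_\eps}u+f_{\eps,t}u=0$ with $f_{\eps,t}=t\varphi R[g_\eps]_+-R[g_\eps]_-$.

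The second conformal step does not go through as written. After the first step, $\hat g_\eps=u_\eps^{4/(n-2)}g_\eps$ has $R[\hat g_\eps]=u_\eps^{-4/(n-2)}(1-s\phi)R[g_\eps]$, not $s(1-\phi)R[g_\eps]u_\eps^{-4/(n-2)}$. By conformal invariance, the $L^{n/2}(\hat g_\eps)$-norm of its negative part equals $\|(1-s\phi)R[g_\eps]_-\|_{L^{n/2}(M,g_\eps)}$, so that part is small. However, removing it "as in \Cref{lemma:ADMmassConv}" needs the Sobolev constant $C(\hat g_\eps)$ to stay bounded as $\eps\to 0$. This is needed both for solvability and for the $o(1)$ mass cost via \Cref{prop:ConvGraduetc}. \Cref{prop:SobolevConstant} does not give this: it relies on uniform two-sided comparability with $g$, and you only control $u_\eps-1$ in $\dot W^{1,2}$, not $u_\eps$ in $L^\infty$ from above and below.

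There are two ways to close this. One is the ground-state identity $\int_M|\nabla^{g_\eps}(u_\eps\psi)|^2\,\dgeps=\int_M u_\eps^2|\nabla^{g_\eps}\psi|^2\,\dgeps-\int_M f_\eps u_\eps^2\psi^2\,\dgeps$ for compactly supported $\psi$. It gives $C(\hat g_\eps)\leq C(g_\eps)/\bigl(1-C(g_\eps)\|(f_\eps)_-\|_{L^{n/2}(M,g_\eps)}\bigr)$, and $\Vol_{\hat g_\eps}(K_0)=\|u_\eps\|^{n^*}_{L^{n^*}(K_0,g_\eps)}$ is bounded by your estimate on $w_\eps$. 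The simpler way is to merge the steps. By conformal covariance of the conformal Laplacian, the product $v=u_\eps u_\eps'$ of your two factors solves $-c_n\Delta_{g_\eps}v+(s\phi R[g_\eps]_+-R[g_\eps]_-)v=0$. This is exactly the paper's equation with $t=s$ and $\varphi=\phi$. Solving it directly involves only $C(g_\eps)$. Your energy estimate then becomes $\|\nabla^{g_\eps}(v-1)\|_{L^2}\leq C\bigl(s+\|R[g_\eps]_-\|_{L^{n/2}}\bigr)$, and the rest of your argument is the paper's proof.
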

    \begin{proof}
    If $R[g]\geq 0$ in $\D^\prime$ and $R[g]\neq0$, then there exists a test function $\varphi \in C^\infty_c(M)$, $\varphi\geq0$, such that
    \begin{equation}
        \langle R[g], \varphi\frac{d\mu_g}{d\mu_h} d\mu_h\rangle >0,
    \end{equation}
    where $\frac{d\mu_g}{d\mu_h} \in C^0 \cap W^{1,n}_\loc$ is the density of $d\mu_g$ with respect to $d\mu_h$.

    Denote by $g_\eps$ the regularisation of $g$ via mollifiers on a compact set $K$ (\Cref{lemma:FamilyOfMetrics}). The convergence $g_\eps \to g$ in $C^0_\loc \cap W^{1,n}_\loc$ implies that $\frac{d\mu_{g_\eps}}{d\mu_h} \to \frac{d\mu_{g}}{d\mu_h}$ in $C^0_\loc \cap W^{1,n}_\loc$ as $\eps \to 0$, where $\frac{d\mu_{g_\eps}}{d\mu_h} \in C^0 \cap W^{1,n}_\loc$ is the density of $d\mu_{g_\eps}$ with respect to $d\mu_h$. Together with $R[g_\eps] \to R[g]$ in $\D^\prime$ and to test functions in $C^0 \cap W_\loc^{1,n}$ (also see \Cref{prop:ExtendingRDist}), this implies that
    \begin{equation}
    \int_M R[g_\eps] \varphi d\mu_{g_\eps} = \langle R[g_\eps], \varphi \frac{d\mu_{g_\eps}}{d\mu_h }d\mu_h \rangle \longrightarrow \langle R[g], \varphi\frac{d\mu_{g}}{d\mu_h} d\mu_h\rangle \text{ as } \eps \to 0.
    \end{equation}
    It follows that there exist $\eps_0>0$ and $C_\varphi>0$ such that, for every $\eps \in (0, \eps_0)$, we have
    \begin{equation}\label{eq:ScalarCurv_lowerbound}
    \int R[g_\eps] \varphi d\mu_{g_\eps} \geq C_\varphi>0.
    \end{equation}
    Consider the function,
    \begin{equation}
    f_{\eps,t}\colonequals  R[g_\eps]_+ t\varphi - R[g_\eps]_- =  R[g_\eps]  t\varphi - R[g_\eps]_-(1-t \varphi),
    \end{equation}
    Since $\varphi \geq 0$ and $R[g_\eps]_-$ is compactly supported in a neighbourhood of $K$, it is clear that 
    $$\|(f_{\eps,t})_-\|_{L^{n/2}(M,g_\eps)}=\|R[g_\eps]_-\|_{L^{n/2}(M,g_\eps)} \longrightarrow 0 \quad \text{ as } \eps \to 0,$$
     for all $0 \leq t\leq 1/\|\varphi\|_{L^\infty}$ by \Cref{prop:NegScalConv} and the uniform equivalence of $g_\eps$ and $g$. Then, the integral condition in \Cref{lemma:MiaoConfMetric} is satisfied for $f_{\eps,t}$ for sufficiently small $\eps$ for all $0 \leq t\leq 1/\|\varphi\|_{L^\infty}$. Therefore, there exists $\eps_0>0$ such that for all $\eps \in (0, \eps_0)$ and $t \in [0, 1/\|\varphi\|_{L^\infty}]$, there exists a positive solution $u_{\eps,t} \in C^2$ on $M$ of
    \begin{equation}
        -c_n\Delta_{g_\eps}u_{\eps,t} + f_{\eps,t} u_{\eps,t}  = 0,    
    \end{equation}
    such that $u_{\eps,t} = 1 + A_{\eps,t}/|x|^{n-2} + \omega_{\eps,t}$ outside a compact set, where $A_{\eps,t}$ is a constant and $\omega_{\eps,t} = O(|x|^{1-n})$ and $\partial \omega_{\eps,t} = O(|x|^{-n})$.
    
    The conformally rescaled metric $\widetilde{g}_{\eps,t} = u_{\eps,t}^{\frac{4}{n-2}}g_\eps$ is a $C^2$ asymptotically flat metric and $R[\widetilde{g}_{\eps,t}] $ satisfies
    \begin{equation}
    R[\widetilde{g}_{\eps,t}] = u_{\eps,t}^{-\frac{4}{n-2}} (R[g_\eps]- R[g_\eps]_+ t\varphi + R[g_\eps]_-) = u_{\eps,t}^{-\frac{4}{n-2}} R[g_\eps]_+ (1 - t\varphi )\geq0.
    \end{equation}
    Applying the divergence theorem, the mass formula in \eqref{eq:mass_formula} can also be written as
    \begin{align}\label{eq:estimate_mass}
    m(\widetilde{g}_{\eps,t}) &  =  m(g_\eps) - \frac{1}{2(n-1)\omega_{n-1}} \int_M f_{\eps,t} u_{\eps,t} d\mu_{g_\eps}.
    \end{align}
    In what follows, we derive several estimates for the second term on the right-hand side of \eqref{eq:estimate_mass}. By the distributional scalar curvature lower bound \eqref{eq:ScalarCurv_lowerbound}, we have
    \begin{equation}\label{eq:IntV_eps_t}
    \int_M f_{\eps,t} d\mu_{g_\eps} = \int_M   \left(R[g_\eps]  t\varphi - R[g_\eps]_-(1-t \varphi)\right) d\mu_{g_\eps}
     \geq  t C_\varphi - C \|R[g_\eps]_-\|_{L^{n/2}},    
    \end{equation}      
    where $C$ is a constant independent of $\eps$. 

    Recall that $w_{\eps,t} = u_{\eps,t}-1$. Applying \Cref{lemma:ScalarLowerBound} and the Sobolev inequality \eqref{eq:SobolevConstant}, we can estimate the integral of $f_{\eps,t} w_{\eps,t}$ by
    \begin{align*}
         \left|\int_M f_{\eps,t} w_{\eps,t} d\mu_{g_\eps}\right| & \leq  \left|\int_M   R[g_\eps]  t\varphi w_{\eps,t} d\mu_{g_\eps} \right|  +\left| \int_M R[g_\eps]_-(1-t \varphi)w_{\eps,t} d\mu_{g_\eps}\right|\\
        & \leq  C  \left(t\| \nabla^{g_\eps} \varphi\|_{L^2} \| \nabla^{g_\eps} w_{\eps,t}\|_{L^2} + \|1-t \varphi\|_\infty\|R[g_\eps]_-\|_{L^{\frac{2n}{n+2}}} \|w_{\eps,t}\|_{L^{n^*}} \right)\\
        & \leq  C  \left(t\| \nabla^{g_\eps} \varphi\|_{L^2} \| \nabla^{g_\eps} w_{\eps,t}\|_{L^2} + \|R[g_\eps]_-\|_{L^{n/2}} \| \nabla^{g_\eps} w_{\eps,t}\|_{L^2}\right)\\
         & \leq  C \left(t + \|R[g_\eps]_-\|_{L^{n/2}}\right)\| \nabla^{g_\eps} w_{\eps,t}\|_{L^2},
    \end{align*}
    where $C$ is a constant independent of $\eps$ and $t$.

    Arguing as in the proof of \Cref{prop:ConvGraduetc} and using the above estimate, we obtain
    \begin{align*}
        c_n \int_M |\nabla^{g_\eps} w_{\eps,t}|^2  d\mu_{g_\eps} & \leq  \int_{M} \left((f_{\eps,t})_- w_{\eps,t}^2 - f_{\eps,t} w_{\eps,t} \right) d\mu_{g_\eps} \\
        &  \leq  \|R[g_\eps]_-\|_{L^{n/2}} \|w_{\eps,t}\|_{L^{n^*}}^2  + C_0 \left(t + \|R[g_\eps]_-\|_{L^{n/2}}\right)\| \nabla^{g_\eps} w_{\eps,t}\|_{L^2}\\
        &  \leq  C \|R[g_\eps]_-\|_{L^{n/2}} \| \nabla^{g_\eps} w_{\eps,t}\|_{L^2}^2  +  C_0 \left(t + \|R[g_\eps]_-\|_{L^{n/2}}\right)\| \nabla^{g_\eps} w_{\eps,t}\|_{L^2}.
    \end{align*}
Taking $\eps$ smaller if necessary, $\|R[g_\eps]_-\|_{L^{n/2}}$ is sufficiently small that the term $ \| \nabla^{g_\eps} w_{\eps,t}\|_{L^2}^2$ can be absorbed into the left-hand side. Then
    \begin{equation}\label{eq:gradient estimate_Vepst}
    \| \nabla^{g_\eps} w_{\eps,t}\|_{L^2} \leq C_0 \left(t + \|R[g_\eps]_-\|_{L^{n/2}}\right),
    \end{equation}
    where $C_0$ is a constant independent of $\eps$ and $t$.
    Combining the estimates for $f_{\eps,t}$ with the gradient estimate \eqref{eq:gradient estimate_Vepst} yields
    \begin{equation}
    \int_M f_{\eps,t} u_{\eps,t} d\mu_{g_\eps} \geq  t C_\varphi - C \|R[g_\eps]_-\|_{L^{n/2}} - C_0^2 \left(t + \|R[g_\eps]_-\|_{L^{n/2}}\right)^2.
  \end{equation}
    Then, there exist $\eps_0>0$ and $t_0>0$ such that for all $\eps \in (0, \eps_0)$ and $t \in [0, t_0)$, we have
    \begin{equation}\label{eq:V_eps,t_lowerbound}
    \int_M f_{\eps,t} u_{\eps,t} d\mu_{g_\eps} \geq t \frac{C_\varphi}{2} - 2 C \|R[g_\eps]_-\|_{L^{n/2}},
  \end{equation}
  where the constants are independent of $\eps$ and $t$.

  Therefore, combining the previous estimate for $f_{\eps,t}$ \eqref{eq:V_eps,t_lowerbound} and the estimate for $m(\widetilde{g}_{\eps,t})$ \eqref{eq:estimate_mass}, we obtain 
    
    \begin{align}
   m(\widetilde{g}_{\eps,t})&  \leq m(g_\eps) -\frac{1}{ 2(n-1)\omega_{n-1}}\left(  t \frac{C_\varphi}{2} - 2 C\|R[g_\eps]_-\|_{L^{n/2}} \right),
    \end{align}
    for all $\eps \in (0, \eps_0)$ and $t \in [0, t_0)$. Since $m(g_\eps) =0$ and $\|R[g_\eps]_-\|_{L^{n/2}} \to 0$, we have that for any fixed positive $t \in (0, t_0)$, there exists $\eps_0>0$ such that $m(\widetilde{g}_{\eps,t}) <0$ for all $\eps \in (0, \eps_0)$,  with $R[\widetilde{g}_{\eps,t}] \geq 0$, contradicting the smooth positive mass theorem. Then, we have $R[g]=0$.
    \end{proof}

    \begin{proposition}\label{prop:Ricciflat}
        Let $g \in C^0 \cap \Wloc^{1,n}$ be a complete, asymptotically flat Riemannian metric on $M^n$, smooth outside a compact set, and assume that $R[g] \geq 0 $ in $\D^\prime(M)$. If $m(g) = 0$, then $g$ is Ricci-flat.
    \end{proposition}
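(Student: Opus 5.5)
By \Cref{prop:scalar_flat}, the hypotheses already give $R[g]=0$ in $\D^\prime(M)$. I would argue by contradiction, following the classical Schoen--Yau metric-perturbation argument as adapted in \cite[Lemma 4.1]{Jiang2022}. Suppose $\Ric[g]\neq 0$ distributionally. Then there is a compactly supported smooth symmetric $(0,2)$-tensor $q$ with
\begin{equation*}
\la \Ric[g], q\ra_g := \int_M \la \Ric[g], q\ra_{g}\,d\mu_g \neq 0,
\end{equation*}
understood via the divergence form of the distributional Ricci tensor, analogous to \eqref{eq:dist_scalar_W1n}. Replacing $q$ by $-q$ if necessary, I may assume this pairing is $2c>0$. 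Consider the perturbed smooth metrics $g_{\eps,t}=g_\eps+tq$ from \Cref{lemma:ScalarLowerBound}. They agree with $g$ outside a fixed compact set, so $m(g_{\eps,t})=m(g)=0$. The stability result and convergence of $g_\eps$ in $C^0_\loc\cap W^{1,n}_\loc$ give $\int\la\Ric[g_\eps],q\ra_{g_\eps}d\mu_{g_\eps}\geq c$ for all small $\eps$.

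Since $g_{\eps,t}$ need not have nonnegative scalar curvature, I would conformally deform it. Let $u_{\eps,t}$ solve $-c_n\Delta_{g_{\eps,t}}u+R[g_{\eps,t}]u=0$ with $u\to1$ at infinity. The solvability condition of \Cref{lemma:MiaoConfMetric} requires the $L^{n/2}$ norm of $R[g_{\eps,t}]$ to be small. Writing $R[g_{\eps,t}]=R[g_\eps]+t\,DR|_{g_\eps}(q)+t^2B_{\eps,t}(q)$, the term $R[g_\eps]\to0$ in $L^{n/2}$ by \Cref{prop:ConvofRs}, since $R[g]=0$. The remaining terms are $O(t)$ only in the weak sense of \Cref{lemma:ScalarLowerBound}, because $\Ric[g_\eps]$ is not uniformly bounded in $L^{n/2}$.

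I would therefore avoid the $L^{n/2}$ route and solve the equation by a variational or Lax--Milgram argument on the energy space $\{w:\nabla w\in L^2,\ w\in L^{n^*}\}$. Coercivity of the form $c_n\int|\nabla w|^2+\int R[g_{\eps,t}]w^2$ follows from \eqref{eq:quadratic_control} with $C_0$ replaced by $A(\eps)$: the form is bounded below by $(c_n-A(\eps)-C_1t-C_2t^2)\|\nabla w\|^2$, which is coercive for small $\eps,t$. This coercivity bound is the careful analysis of the conformal Laplacian. Setting $w=u-1$, the equation becomes $c_n\Delta w - R w = R$. Testing against $w$ and using \eqref{eq:linear_control} gives
\begin{equation*}
\|\nabla w_{\eps,t}\|_{L^2}\leq C\,(A(\eps)+C_1t+C_2t^2).
\end{equation*}
Elliptic regularity on the smooth metric $g_{\eps,t}$ then gives a $C^2$ positive solution with the expansion $u=1+A_{\eps,t}|x|^{2-n}+O_2(|x|^{1-n})$. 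Positivity follows by testing against $u_-$ and using the same coercivity.

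The new metric $\widetilde g_{\eps,t}=u^{4/(n-2)}g_{\eps,t}$ is scalar-flat. The mass formula as in \eqref{eq:estimate_mass} gives
\begin{equation*}
m(\widetilde g_{\eps,t})=-\frac{1}{2(n-1)\omega_{n-1}}\Big(\int R[g_{\eps,t}]\,d\mu_{g_{\eps,t}}+\int R[g_{\eps,t}]\,w\,d\mu_{g_{\eps,t}}\Big).
\end{equation*}
In the first integral, the divergence terms of $DR(q)$ vanish and $B_{\eps,t}$ contributes $O(t^2)$, so it equals $\int R[g_\eps]\,d\mu_{g_\eps}-t\int\la\Ric[g_\eps],q\ra\,d\mu+O(t^2)$. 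The first piece is $o_\eps(1)$ in $L^1$ by \Cref{prop:ConvofRs}. The volume-form change $d\mu_{g_{\eps,t}}$ versus $d\mu_{g_\eps}$ contributes an additional $t\int R[g_\eps]\tfrac12\tr q$, which is also $o_\eps(1)\,t$. By \eqref{eq:linear_control} and the gradient bound, the second integral is at most $C(A(\eps)+t)^2$. Altogether,
\begin{equation*}
m(\widetilde g_{\eps,t})\leq -\frac{1}{2(n-1)\omega_{n-1}}\big(tc - o_\eps(1) - C(A(\eps)+t)^2 - Ct^2\big).
\end{equation*}
Fixing $t$ small and then letting $\eps\to0$ makes the mass negative, contradicting \Cref{theo:OriginalPMT}. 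Hence $\Ric[g]=0$.

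The main obstacle is the control of $R[g_{\eps,t}]$ in a form usable both for existence and for the mass estimate. I expect the delicate points to be the coercivity/existence step without $L^{n/2}$ smallness, and verifying that $\int R[g_\eps]\,d\mu$ and the $t$-linear Ricci pairing converge uniformly. The latter requires the uniform-in-$\eps$ divergence-form bounds on $\Ric[g_\eps]$ in $L^n$ and $L^{n/2}$ near $\supp q$, which are supplied by the $W^{1,n}$ convergence.
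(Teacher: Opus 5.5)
Your overall strategy is the paper's: perturb $g_\eps$ by $tq$, conformally deform $g_{\eps,t}$ to a scalar-flat metric using the coercivity from \Cref{lemma:ScalarLowerBound} (with $A(\eps)$, since $R[g]=0$), and get a negative mass from the first-variation term $-t\int\langle \Ric[g_\eps],q\rangle$. As written, however, the sign is wrong, so the contradiction does not follow. You normalize $\int\langle\Ric[g],q\rangle_g\,d\mu_g=2c>0$. Your own expansion then gives $\int R[g_{\eps,t}]\,d\mu_{g_{\eps,t}}\le o_\eps(1)-tc+O(t^2)$. Since $2(n-1)\omega_{n-1}\,m(\widetilde g_{\eps,t})=-\int R[g_{\eps,t}]u_{\eps,t}\,d\mu_{g_{\eps,t}}$, this makes $m(\widetilde g_{\eps,t})$ \emph{positive} for fixed small $t$ and small $\eps$, which contradicts nothing. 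Your final displayed inequality silently switches to $+tc$. The fix is to use the freedom $q\mapsto -q$ the other way and normalize the pairing to $-2c<0$, as the paper does (equivalently, perturb by $g_\eps-tq$). Then $-t\int\langle\Ric[g_\eps],q\rangle\,d\mu_{g_\eps}\ge tc$, and the rest of your estimate goes through.

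With that corrected, your technical choices differ mildly from the paper's and are sound.
\begin{itemize}
\item \textbf{Fixed smooth $q$.} This lets you apply \Cref{lemma:ScalarLowerBound} verbatim; the paper uses $q_\eps=\varphi\,g_\eps g_\eps q_0$ and must argue the lemma is uniform over that family. The cost is a short density argument to produce a smooth $q$ with nonzero pairing, since $\langle\Ric[g],q\rangle_g\,d\mu_g$ pairs $\Ric[g]$ with the merely $C^0\cap W^{1,n}$ test tensor $g^{-1}g^{-1}q\,d\mu_g$.
\item \textbf{Lax--Milgram on the energy space.} This is more elementary than the paper's weighted-space Fredholm argument and gives existence directly from coercivity. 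You should note that for fixed $(\eps,t)$ both sides of the estimates in \Cref{lemma:ScalarLowerBound} are continuous on that space, because $R[g_{\eps,t}]$ is smooth and compactly supported. They therefore extend from $C^\infty_c$ by density, which replaces the $O_1(|x|^{-\tau})$ decay that the weighted spaces provide.
\item \textbf{Quantitative gradient bound.} Your bound $\|\nabla w_{\eps,t}\|_{L^2}\le C\,(A(\eps)+t)$ makes the error term $O((A(\eps)+t)^2)$, which is cleaner than the paper's $\delta$-bookkeeping.
\item \textbf{Volume form.} Expanding $d\mu_{g_{\eps,t}}$ explicitly avoids the paper's absorption of the density $d\mu_{g_{\eps,t}}/d\mu_{g_\eps}$ into the test function.
\end{itemize}
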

    \begin{proof}
     By \Cref{prop:scalar_flat}, we have $R[g]=0$ in $\D^\prime$. The proof of Ricci flatness follows a route similar to that of scalar flatness, but it is more involved. Assume that $\Ric[g]\neq 0$ in $\D^\prime$. Then there exist two vector fields $X$ and $Y$, and a test function $\varphi \in C^\infty_c(M)$ such that 
        \begin{equation}
         \langle \Ric[g](X, Y),  \varphi d\mu_g \rangle  \neq 0.
    \end{equation}
     The argument used for the scalar curvature in \Cref{prop:ExtendingRDist} also shows that $\varphi d\mu_g$ is a valid test density for $\Ric[g]$.

    Taking the symmetrized tensor product of $X$ and $Y$, we can build a smooth $(2,0)$-symmetric tensor field $q_0$ such that locally $q_0^{ij} = \frac{1}{2} (X^i Y^j + Y^i X^j)$ and $\Ric[g](X,Y)= \Ric[g]_{ij}X^iY^j = \Ric[g]_{ij}q_0^{ij}$. Define the $C^0 \cap W^{1,n}_\loc$ regular $(0,2)$-tensor field $q_{ij} = \varphi g_{i\ell} g_{jk} q_0^{\ell k}$, and note that the pairing $ \langle \Ric[g], q \rangle_g$ defines a compactly supported distribution satisfying
    \begin{equation}
         \langle \Ric[g](X, Y),  \varphi d\mu_g \rangle = \langle \langle \Ric[g], q \rangle_g ,  d\mu_g \rangle  \neq 0.
    \end{equation}
    Replacing $q_0$ by $-q_0$ if necessary, we may assume that
        \begin{equation}
        \langle \langle \Ric[g], q\rangle_g, d\mu_g \rangle  < 0.
    \end{equation}

    Let $g_\eps$ be the regularisation of $g$ via mollifiers on a compact set $K$ that also containing the support of $q$ (\Cref{lemma:FamilyOfMetrics}). Define a smooth $(0,2)$-symmetric tensor by the local formula $(q_\eps)_{ij} = \varphi (g_\eps)_{i\ell} (g_\eps)_{jk} q_0^{\ell k}$. For any $\eps>0$ and $t>0$ sufficiently small, define $g_{\eps,t} = g_\eps + t q_\eps$. Since $q_\eps$ is compactly supported, $g_{\eps,t}$ is a compactly supported perturbation of $g_\eps$, and for sufficiently small $\eps$ and $t$, the tensor $g_{\eps,t}$ remains a Riemannian metric uniformly in $\eps$ and $t$ and is uniformly equivalent to $g$. 

    It is not hard to see that $\Ric[g_\eps] \to \Ric[g]$ as distributions for test functions in $C^0_\loc \cap W^{1,n}_\loc$ (see \Cref{prop:ExtendingRDist}), and that $q_\eps \to q$ and $d\mu_{g_\eps}/d\mu_{g} \to 1$ in $C^0_\loc \cap W^{1,n}_\loc$ since $g_\eps \to g$ in $C^0_\loc \cap W^{1,n}_\loc$. Then there exist $\eps_0>0$ and $C_q>0$ such that for all $\eps \in (0, \eps_0)$, we have
    \begin{equation} \label{eq:Riccibound}
        \langle \Ric[g_\eps], q_\eps d\mu_{g_\eps} \rangle = \int_M \langle \Ric[g_\eps], q_\eps\rangle_{g_\eps} d\mu_{g_\eps}= \int_M \Ric[g_\eps]_{ij} q_0^{ij} \varphi d\mu_{g_\eps} \leq -C_q<0,
    \end{equation} 
    Unfortunately, we cannot apply \Cref{lemma:MiaoConfMetric} directly with the functions $f_{\eps,t} = R[g_{\eps,t}]$ to obtain a scalar-flat metric, because the $L^{n/2}$ estimate would be too restrictive to provide uniform control in $t$ and $\eps$. However, \Cref{lemma:ScalarLowerBound} provides a uniform estimate showing that $R[g_{\eps,t}] \to R[g]=0$ in $\D^\prime$ and in a relevant class of functions uniformly as $\eps$ and $t$ tend to zero. We use this estimate to obtain the desired control over the solutions of the conformal Laplacian. More precisely, using this fact, the following claim will be proved below. 
    
    Claim: For any $\delta>0$, there exist $\eps_0, t_0>0$ such that for all $t \in (0,t_0)$ and $\eps \in (0, \eps_0)$, we can find a positive $C^2$ solution $u_{\eps,t}>0$ of 
    \begin{equation} \label{eq:PDE_u}
    -c_n\Delta_{g_{\eps,t}}u_{\eps,t} + R[g_{\eps,t}] u_{\eps,t}  = 0,    
    \end{equation}
    satisfying $\|\nabla^{g_{\eps,t}}u_{\eps,t}\|_{L^2(M, g_{\eps,t})} < \delta$ and having the expansion stated in \Cref{lemma:MiaoConfMetric}.
     
    From this claim, we have that the conformally rescaled metric $\widetilde{g}_{\eps,t} = u^{\frac{4}{n-2}}_{\eps, t} g_{\eps,t}$ is $C^2$ asymptotically flat, scalar-flat, i.e.,
    \begin{equation}
    R[\widetilde{g}_{\eps,t}] = u_{\eps,t}^{-\frac{4}{n-2}} ( R[g_{\eps,t}] - R[g_{\eps,t}])= 0, 
    \end{equation}
    and the ADM mass is given by
    \begin{align}
    2(n-1)\omega_{n-1} m(\widetilde{g}_{\eps,t})&  = 2(n-1)\omega_{n-1} m(g_{\eps,t}) - \int_M R[g_{\eps,t}]  u_{\eps,t}   \,d\mu_{g_{\eps,t}}\\
    &  = - \int_{M}  R[g_{\eps,t}]  u_{\eps,t}  \,d\mu_{g_{\eps,t}},
    \end{align}
    since $m(g_{\eps,t}) = m(g_{\eps}) = m(g) = 0$, and note that $R[g_{\eps,t}] = R[g_{\eps}]=0$ outside a compact set. 

    Let $d\mu_{g_{\eps,t}} / d\mu_{g_{\eps}} \in C^0 \cap W^{1,n}_\loc$ be the density of $d\mu_{g_{\eps,t}}$ with respect to $d\mu_{g_{\eps}}$, and we write
    \begin{equation}
        \int_{M}  R[g_{\eps,t}]  u_{\eps,t}  \,d\mu_{g_{\eps,t}} = \int_{M}  R[g_{\eps,t}]  (u_{\eps,t} \frac{d\mu_{g_{\eps,t}}}{d\mu_{g_{\eps}}} -1 )  \,d\mu_{g_{\eps}} + \int_{M}  R[g_{\eps,t}]  \,d\mu_{g_{\eps}}.
    \end{equation}

    We now estimate $m(\widetilde{g}_{\eps,t})$ in terms of $R[g_{\eps,t}]$. As in the proof of \Cref{lemma:ScalarLowerBound}, recall the following formula \cite[Exercise 1.18]{L:19} (or \cite[Proposition 4]{BrendleMarques2011}),
    \begin{equation} \label{eq:VarScalarCurv_complete}
        R[g_{\eps,t}] = R[g_\eps] + t\restr{DR}{g_\eps}(q_\eps) + t^2 B_{\eps,t}(q_\eps),
    \end{equation}
    where
    \begin{equation}\label{eq:VarScalarCurv}
        \restr{DR}{g_\eps}(q_\eps) = \operatorname{div}_{g_\eps}(\operatorname{div}_{g_\eps}(q_\eps)) - \Delta_{g_\eps} \tr_{g_\eps}(q_\eps)  -  \langle \Ric[g_\eps] , q_\eps \rangle_{g_\eps},
    \end{equation}
    and $B_{\eps,t}(q_\eps)$ is the sum of a compactly supported divergence term, a term quadratic in $\nabla^{g_\eps} q_\eps$ and $\Ric[g_\eps]*q_\eps^2$.

    Since $q_\eps$ is compactly supported, integrating \eqref{eq:VarScalarCurv_complete} and applying the divergence theorem, we have that
    \begin{align}
         \int_{M}  R[g_{\eps,t}] \,d\mu_{g_\eps} & = \int_{M}  R[g_{\eps}]  - t \langle \Ric[g_\eps] , q_\eps \rangle_{g_\eps} + t^2  B_{\eps,t}(q_\eps) \,d\mu_{g_{\eps}}.
    \end{align}

     The second term can be estimated by the Ricci bound \eqref{eq:Riccibound} obtained from the assumption, while the third term has no contribution of the compactly supported divergence term, the quadratic terms can be estimated with $g_\eps \to g$ in $C^0_\loc \cap W^{1,n}_\loc$ for small $t$, and the Ricci term can be estimated with the distributional convergence $\Ric[g_\eps]$ to $\Ric[g]$. Thus,
    \begin{align}
         \int_{M}  R[g_{\eps,t}] \,d\mu_{g_{\eps}} & \geq  \int_{M}  R[g_{\eps}] \,d\mu_{g_{\eps}} + \frac{C_q}{2} t - C_3 t^2, 
    \end{align}
where $C_3>0$ is a constant independent of $\eps$

    By \Cref{lemma:ScalarLowerBound}, where we take $w_{\eps,t} = u_{\eps,t} \frac{d\mu_{g_{\eps,t}}}{d\mu_{g_{\eps}}} -1$, we also obtain the estimate
    \begin{equation}
        \left| \int_{M}  R[g_{\eps,t}]  (u_{\eps,t} \frac{d\mu_{g_{\eps,t}}}{d\mu_{g_{\eps}}} -1 )  d\mu_{g_{\eps}} \right| \leq \left( A(\eps) + C_1 t +C_2 t^2 \right) \|\nabla^{g_{\eps,t}} u_{\eps,t} \|_{L^2(M, g_{\eps,t})},
    \end{equation}
    where $A(\eps) \to 0$ as $\eps \to 0$, and $C_1$ and $C_2$ are positive constants independent of $\eps$ and $t$, for $\eps$ and $t$ sufficiently small (see computations after \eqref{eq:Claim1_Bkterm}). Observe that the lemma can also be applied with $d\mu_\eps$ instead of $d\mu_{\eps,t}$, and the density $\frac{d\mu_{g_{\eps,t}}}{d\mu_{g_{\eps}}}$ was absorbed in the constants as in the proof of \Cref{lemma:ScalarLowerBound}. Although \Cref{lemma:ScalarLowerBound} was proved for fixed $q$, its proof applies uniformly to the family $q_\eps$. In fact, the constants depend on $\|q_\eps\|_{L^\infty}$ and $\|\nabla^{g_\eps} q_{\eps}\|_{L^p}$ for the relevant $p \in[2,n]$. These quantities are uniformly bounded in $\eps$ since $q$ is compactly supported for a fixed compact set, $q_\eps \to q$ in $C^0_\loc \cap W^{1,n}_\loc$ and the metrics $g_\eps$ are uniformly equivalent.

    Combining the previous estimates yields 
     \begin{equation}
        \int_{M} R[g_{\eps,t}] u_{\eps,t} \,d\mu_{g_{\eps,t}} \geq \int_{M}  R[g_{\eps}] \,d\mu_{g_{\eps}}+  \frac{C_q}{2} t - C_3 t^2 - \left( A(\eps)  + C_1 t +C_2 t^2 \right) \|\nabla^{g_{\eps,t}} u_{\eps,t}\|_{L^2},
    \end{equation}

    Therefore, by the claim, for any $\delta>0$, there exist $\eps_0, t_0>0$, such that for any $\eps \in (0, \eps_0)$, $t \in (0,t_0)$, we have
    \begin{equation}
     2(n-1)\omega_{n-1} m(\widetilde{g}_{\eps,t}) \leq -\int_{M}  R[g_{\eps}] \,d\mu_{g_{\eps}}-  \frac{C_q}{4}t + A(\eps)\delta  + (C_1+C_2)t  \delta.
    \end{equation}
    Taking $\delta =\frac{C_q }{8 (C_1+C_2)}$, we have
    \begin{equation}
     2(n-1)\omega_{n-1} m(\widetilde{g}_{\eps,t}) \leq -\int_{M}  R[g_{\eps}] \,d\mu_{g_{\eps}} -   \frac{C_q}{8} t +  A(\eps)  \frac{C_q }{8 (C_1+C_2)}.
    \end{equation}
    Since $R[g] = 0$ and $R[g] \star_M \rho_\eps=0$, we have that $R[g_\eps] \to 0$ in $L^1$ by \Cref{prop:ConvofRs}. Therefore, for any fixed $t \in (0,t_0)$, there exists $\eps_0>0$ such that for all $\eps \in (0, \eps_0)$ we obtain $m(\widetilde{g}_{\eps,t}) < 0$, with $R[\widetilde{g}_{\eps,t}] = 0$, contradicting the smooth positive mass theorem. Hence, $g$ is Ricci-flat.
    
    \medskip
    \noindent \emph{Proof of the Claim.} For this proof, we briefly use the theory of elliptic operators in weighted Sobolev spaces as introduced in \cite{Bartnik1986}. Let $w_{\eps,t} \in W^{2,p}_{-\tau}$, $p>n$ and $\tau>(n-2)/2$, and suppose that $w_{\eps,t} $ satisfies 
    \begin{align}\label{eq:PDE_pointwise}
    -c_n \Delta_{g_{\eps,t}} w_{\eps,t} + R[g_{\eps,t}] w_{\eps,t} = 0.
    \end{align}
    Multiplying by $w_{\eps,t}$ and integrating by parts yields
    \begin{equation}
    c_n \int_M |\nabla^{g_{\eps,t}}w_{\eps,t}|_{g_{\eps,t}}^2 \,d\mu_{g_{\eps,t}} +\int _M R[g_{\eps,t}] w_{\eps,t}^2 d\mu_{g_{\eps,t}}  = 0,
    \end{equation}
    where the boundary term vanishes due to the decay rate of $w_{\eps,t}$.

    Since $R[g]=0$ and $w_{\eps,t}=O_1(|x|^{-\tau})$ by weighted Morrey's inequality and \Cref{lemma:ScalarLowerBound}, there exist $\eps_0, t_0>0$, such that for any $\eps \in (0, \eps_0)$, $t \in (0,t_0)$, we have
    \begin{equation}
    \left| \int_{M} R[g_{\eps,t}] w_{\eps,t}^2 \,d\mu_{g_{\eps,t}} \right| \leq  \beta(\eps,t) \|\nabla^{g_{\eps,t}} w_{\eps,t}\|_{L^2(M, g_{\eps,t})}^2,
    \end{equation}
    where $\beta(\eps,t) \to 0$ as $\eps\to 0 $ and $t \to 0$.

    Therefore, we can find $\eps_0,t_0>0$ such that $\beta(\eps,t) <1$ uniformly in $\eps$ and $t$, and
    \begin{equation}
    0 = c_n \int_M |\nabla^{g_{\eps,t}}w_{\eps,t}|_{g_{\eps,t}}^2 \,d\mu_{g_{\eps,t}} + \int_M R[g_{\eps,t}]w_{\eps,t}^2 \,d\mu_{g_{\eps,t}} \geq  (c_n-\beta(\eps,t)) \|\nabla^{g_{\eps,t}} w_{\eps,t}\|_{L^2(M, g_{\eps,t})}^2,
    \end{equation} 
    which implies $\|\nabla^{g_{\eps,t}} w_{\eps,t}\|_{L^2(M, g_{\eps,t})}=0$ uniformly in $t$ and $\eps$, and then $w_{\eps,t} \equiv 0$ on $M$. Since the operator
    \begin{equation}
    -c_n \Delta_{g_{\eps,t}}  + R[g_{\eps,t}]  \colon W^{2,p}_{-\tau} \longrightarrow L^p_{-\tau-2}
    \end{equation}
    is a compact perturbation of the Laplacian $\Delta_{g_{\eps,t}}$, it has index zero. The argument above shows that its kernel is trivial,  so it is an isomorphism for each $\eps$ and $t$.
    
    As a consequence, there exists $w_{\eps,t}$ such that 
    \begin{equation}
    -c_n \Delta_{g_{\eps,t}} w_{\eps,t} + R[g_{\eps,t}] w_{\eps,t}  = - R[g_{\eps,t}].
    \end{equation}
    Multiplying by $w_{\eps,t}$ and integrating by parts again yields
    \begin{equation}
        c_n \int_M |\nabla^{g_{\eps,t}}w_{\eps,t}|_{g_{\eps,t}}^2 \,d\mu_{g_{\eps,t}} + \int _M R[g_{\eps,t}] w_{\eps,t}^2 \,d\mu_{g_{\eps,t}} =  -\int _M R[g_{\eps,t}] w_{\eps,t} \,d\mu_{g_{\eps,t}}.
    \end{equation}
     Applying \Cref{lemma:ScalarLowerBound} to both terms, we have
     \begin{align}
          c_n \|\nabla^{g_{\eps,t}} w_{\eps,t}\|_{L^2(M, g_{\eps,t})}^2 \leq C \beta(\eps,t) \|\nabla^{g_{\eps,t}} w_{\eps,t}\|_{L^2(M, g_{\eps,t})}\left(1  +  \|\nabla^{g_{\eps,t}} w_{\eps,t}\|_{L^2(M, g_{\eps,t})}\right),
     \end{align}
     for some constant $C>0$ independent of $\eps$ and $t$. Therefore, this implies that there exist $\eps_0, t_0>0$ such that for all $\eps \in (0, \eps_0)$ and $t \in (0,t_0)$, we have
          \begin{align}
          \|\nabla^{g_{\eps,t}} w_{\eps,t}\|_{L^2(M, g_{\eps,t})} \leq C \beta(\eps,t) \longrightarrow 0 \quad \text{ as } \eps \to 0 \text{ and } t \to 0,
     \end{align}
     for some constant $C>0$ independent of $\eps$ and $t$.
     
     Let $E \colonequals \{ x \in M \colon u_{\eps,t}<0\}$, where $E$ is precompact since $u_{\eps,t} \to 1$ at infinity. Denote by $(u_{\eps,t})_-$ the negative part of $u_{\eps,t}$. If $E$ is not empty, a density argument allows us to use $(u_{\eps,t})_- \in C^{0,1}_0$ as a test function in the weak formulation of \eqref{eq:PDE_u} as well as in \Cref{lemma:ScalarLowerBound}, then
     \begin{align}
           \|\nabla^{g_{\eps,t}} u_{\eps,t}\|_{L^2(E, g_{\eps,t})}^2 & = -\frac{1}{c_n} \int _E R[g_{\eps,t}] (u_{\eps,t})_-^2 \,d\mu_{g_{\eps,t}}
             \leq \frac{1}{c_n} \beta(\eps,t) \|\nabla^{g_{\eps,t}} u_{\eps,t}\|_{L^2(E, g_{\eps,t})}^2,
     \end{align}
     which is a contradiction since we can choose $\eps$ and $t$ sufficiently small such that $\frac{1}{c_n} \beta(\eps,t)<1$. Then $u_{\eps,t}\geq 0$. Positivity of $u_{\eps,t}$ follows from the strong maximum principle \cite[Theorem 8.19]{GilbargTrudinger2001}. The $C^2$ regularity is obtained from elliptic regularity, while its asymptotic expansion follows from the fact that $u_{\eps,t}$ is $g$-harmonic outside a compact set. This concludes the proof of the claim.
     \end{proof}

     We recall the definition of $\RCD$ spaces in the context of Riemannian manifolds. In general, this setting is formulated for metric-measure spaces (see \cite{Gigli2018}). A $C^0$ metric together with its induced Lebesgue measure naturally provides a metric-measure space. In this section, we denote by $\Delta$ the Dirichlet Laplacian associated with $g$ and by $D(\Delta)$ its domain (see \Cref{rem:DirichletLap} for a definition).

\begin{definition}[$\RCD$ Space] \label{def:RCDspaces} Let $(M^n, g)$ be a Riemannian manifold with $C^0$ metric $g$, and denote by $\mu_g$ the Lebesgue measure and distance $d_g$ induced by $g$. Let ${\sf K} \in \RR$. Then we say that $(M, d_g, \mu_g)$ is an $\RCD({\sf K}, n)$-space provided:
\begin{enumerate}
    \item $W^{1,2}(M)$ is a Hilbert space.
    \item There exists $C>0$ and $p \in M$ such that $\mu_g\left(B_r(p)\right) \leq e^{C r^2}$ for all $r>0$.
    \item For any $f \in W^{1,2}(M)$ satisfying $|\nabla f|_g \in L^{\infty}(M)$, it admits a Lipschitz representative $\tilde{f}$ with $\mathrm{Lip}(\tilde{f}) \leq \|\nabla f\|_{L^\infty}$.
    \item The Bochner inequality is satisfied, i.e.,
        $$
        \frac{1}{2}\int \Delta \varphi |\nabla f|^2 d\mu_g \geq \frac{1}{n} \int_M (\Delta f)^2 \varphi d\mu_g + \int_M\varphi\left[ g(\nabla f, \nabla \Delta f)+ {\sf K} |\nabla f|^2\right] \mathrm{d} \mu_g
        $$
for every choice of functions $f \in D(\Delta)$ and $\varphi \in D(\Delta) \cap L^{\infty}(M)$, $\varphi\geq 0 $, with $\Delta f \in$ $W^{1,2}(M)$ and $\Delta \varphi \in L^{\infty}(M)$.
\end{enumerate}
\end{definition}
We say that $(M^n, g)$ has \textit{nonnegative Ricci curvature in $\RCD$ sense} if $(M^n, d_g, \mu_g)$ is a $\RCD(0,n)$ space.

\begin{remark}\label{rmk:RCD}
Given a Riemannian manifold $(M, g)$ with a $C^0$ metric and Lebesgue measure $\mu_g$, properties (1) and (3) from \Cref{def:RCDspaces} are automatically satisfied. Furthermore, if $(M, g)$ is asymptotically flat, property (2) also holds. If $(M, g)$ is smooth with Ricci curvature bounded below by ${\sf K}$, then property (4) is satisfied as well. Therefore, our goal is to show that, if $m(g)\equiv 0$, the asymptotically flat manifold $(M^n, g)$ satisfies the condition (4) with ${\sf K}=0$.
\end{remark}

\begin{remark}\label{rem:DirichletLap}
    Let $(M, g)$ be a Riemannian manifold with $g \in C^0$. For any $ u \in W^{1,2}(M)$, if there exists $ v \in L^2(M)$ such that
    $$
    \int_M v w \,\dg = - \int_M \langle du, dw \rangle_g\,\dg, \quad \forall w \in C^\infty_c(M), $$
    then we define $\Delta_g u := v$, the Dirichlet Laplacian of $u$. The domain of the Dirichlet Laplacian is 
    $$
    D(\Delta_g):= \{ u \in W^{1,2}(M) \colon u {\text{ has a Dirichlet Laplacian in } L^{2}(M)}\}.
    $$
\end{remark}

    \begin{theorem}\label{theo:PMT_rigidity}
        Let $M^n$, $n\geq3$, be a smooth manifold endowed with a complete, asymptotically flat Riemannian metric $g \in C^0\cap \Wloc^{1,n}$, smooth outside a compact set. If $R[g]\geq0$ in $\D^\prime$ and $m(g)\equiv0$ then $(M^n, d_g)$ and $(\RR^n, d_\delta)$ are isometric as metric spaces.
    \end{theorem}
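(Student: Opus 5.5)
By \Cref{prop:Ricciflat}, the hypotheses $R[g]\geq 0$ and $m(g)=0$ already give $\Ric[g]=0$ in $\D^\prime(M)$. The plan is to upgrade this distributional statement to the synthetic condition $\RCD(0,n)$ and then run a volume-comparison argument at infinity, exploiting asymptotic flatness.

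\textbf{Step 1: from distributional Ricci flatness to $\RCD(0,n)$.} We invoke \cite[Theorem 7.2]{mondino2025}, which for $C^0\cap\Wloc^{1,2}$ metrics (in particular $C^0\cap \Wloc^{1,n}$, $n\ge3$) identifies distributional lower bounds $\Ric\geq {\sf K}g$ with the Bochner inequality in \Cref{def:RCDspaces}(4). By \Cref{rmk:RCD}, conditions (1) and (3) hold automatically for $C^0$ metrics. Condition (2) follows from asymptotic flatness: $g$ is uniformly equivalent to $\delta$ outside $K$, so $\mu_g(B_r(p))\le Cr^n\le e^{Cr^2}$. Completeness of $(M,d_g)$ is part of the hypotheses. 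Hence $(M,d_g,\mu_g)$ is an $\RCD(0,n)$ space.

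\textbf{Step 2: Bishop--Gromov rigidity.} In an $\RCD(0,n)$ space the ratio $\theta(r)=\mu_g(B_r(p))/(\omega_{n-1}r^n/n)$ is nonincreasing in $r$, and it tends to $1$ as $r\to0$ at points where the space is infinitesimally Euclidean. Since $g$ is continuous, every point qualifies: in normal-like coordinates at $p$, $g(p)$ can be taken Euclidean, and continuity gives $\theta(r)\to1$. We then compute the asymptotic volume ratio. Outside $K$, in the chart $\Phi$, we have $g_{ij}-\delta_{ij}=O(|x|^{-\tau})$, so $d_g(p,x)=|x|+o(|x|)$ and $\mu_g$ is asymptotic to Lebesgue measure; this yields $\lim_{r\to\infty}\theta(r)=1$. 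Monotonicity then forces $\theta\equiv1$. By the volume cone implies metric cone theorem for $\RCD(0,n)$ spaces (De Philippis--Gigli), combined with $\theta\equiv 1$ at every point (or, as in \cite{Jiang2022}, the maximal volume rigidity for noncollapsed $\RCD(0,n)$ spaces), $(M,d_g)$ is isometric to $(\RR^n,d_\delta)$.

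\textbf{Main obstacle.} The delicate points are twofold. First, one must check that the hypotheses of \cite[Theorem 7.2]{mondino2025} exactly match our setting, including that the Laplacian in the Bochner inequality is the one from \Cref{rem:DirichletLap}. Second, one must verify the limit $\theta(r)\to1$ as $r\to\infty$. For the latter we will use that the error in $g_{ij}$ decays, so distance balls $B_r(p)$ are sandwiched between coordinate balls of radii $r(1\pm o(1))$, and $\Vol_g(K)$ is negligible compared to $r^n$. For the small-scale behaviour we rely on continuity of $g$, so that the tangent cone at every point is Euclidean and the space is noncollapsed with $\mathcal H^n=\mu_g$. With these in hand, the rigidity follows by citing \cite{Jiang2022}.
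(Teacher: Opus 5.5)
Your proposal is correct and follows essentially the same route as the paper. You get distributional Ricci-flatness from \Cref{prop:Ricciflat}, upgrade it to $\RCD(0,n)$ via \cite[Theorem 7.2]{mondino2025}, use asymptotically Euclidean volume growth together with Bishop--Gromov monotonicity to force the volume ratio to be identically $1$, and conclude with the volume rigidity of \cite[Corollary 1.7]{Gigli2018}. Your small-scale argument via continuity of $g$, together with $\mu_g=\mathcal{H}^n$ (noncollapsedness), makes explicit what the paper uses implicitly when it quotes the Bishop inequality $\Vol(B_r(p))\leq \omega_n r^n$.
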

\begin{proof}
    The theorem now follows analogously to \cite[Theorem 1.1]{Jiang2022} using $\RCD$-spaces by volume stability theorem of $\RCD$ spaces (\cite{Lott2009-la},  \cite{Sturm2006-bs}, see also \cite[Corollary 1.7]{Gigli2018}). We include it for completeness.
    
    \Cref{prop:Ricciflat} implies that $\Ric[g]=0$. Hence, by the equivalence between distributional Ricci curvature lower bounds and $\RCD(\sf{K},n)$-spaces in \cite[Theorem 7.2]{mondino2025}, which essentially establishes property $(4)$ in \Cref{def:RCDspaces}, we have that $(M, d_g, \mu_g)$ is a $\RCD(0,n)$ space, i.e., $(M, g)$ has nonnegative Ricci curvature in $\RCD$ sense. The volume condition in \cite[Theorem 7.2]{mondino2025} is satisfied as discussed in \Cref{rmk:RCD} (also see proof of \cite[Proposition 7.5]{mondino2025}). 

    Let $p \in M$. The asymptotic flatness of $(M, g)$ and $C^0$ regularity of $g$ show that the volume growth is asymptotically Euclidean. Moreover, since $(M, g)$ has nonnegative Ricci curvature in $\RCD$ sense, by the volume comparison for $\RCD$ spaces (see \cite[Corollary 1.7]{Gigli2018})), we have $\Vol(B_r(p)) \leq \omega_n r^n$ for all $r>0$. Combining these two results, we conclude that
\begin{equation}
    \lim_{r \to \infty} \frac{\Vol(B_r(p))}{\omega_n r^n} = 1.
\end{equation}
    For a more detailed argument, see \cite[Lemma 2.6]{Li2018Mass}. Applying the Bishop-Gromov inequality for $\RCD$-spaces with nonnegative Ricci curvature, we have
    $$
 \Vol(B_r(p))= \omega_n r^n.
    $$
    From a volume rigidity result \cite[Corollary 1.7]{Gigli2018}, we obtain that $B_{r/2}(p)$ is isometric to $B_{r/2}(0_{\RR^n}) \subset \RR^n$, which implies that $M$ is isometric to $\RR^n$ in the sense of metric spaces.  
\end{proof}

By a low-regularity version of Myers-Steenrod and assuming $g \in C^0\cap W^{1,p}$, $p>n$, the metric-space isometry is also a Riemannian isometry. 

\begin{corollary} \label{cor:RigidityW1p}
    Let $M^n$ be a smooth manifold endowed with a complete, asymptotically flat Riemannian metric $g \in C^0\cap \Wloc^{1,p}$, $p>n$, smooth outside a compact set. If $R[g]\geq0$ in $\D^\prime$ and $m(g)\equiv0$ then there exists a $C^{1,1-\frac{n}{p}}$ isometry between $(M, g)$ and $(\RR^n, \delta)$.
    \end{corollary}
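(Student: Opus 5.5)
Since $\Wloc^{1,p}\subset C^0\cap\Wloc^{1,n}$ for $p>n$, all hypotheses of \Cref{theo:PMT_rigidity} hold, so there is a metric-space isometry $F\colon (M,d_g)\to(\RR^n,d_\delta)$. The plan is to promote $F$ to a Riemannian isometry with the low-regularity Myers--Steenrod theorem \cite[Corollary C]{Matveev2017}. That result states that a distance-preserving map between manifolds carrying $C^{0,\alpha}$ Riemannian metrics is $C^{1,\alpha}$ and pulls back one metric tensor to the other.

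\textbf{Step 1: H\"older regularity of the metric.} In any chart, Morrey's embedding $W^{1,p}_\loc\hookrightarrow C^{0,1-\frac np}_\loc$ for $p>n$ gives that the components $g_{ij}$ are locally $C^{0,\alpha}$ with $\alpha=1-\frac np\in(0,1)$. Outside the compact set $K$, $g$ is smooth. Hence $g$ is a $C^{0,\alpha}$ Riemannian metric on all of $M$, and $\delta$ is trivially smooth.

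\textbf{Step 2: checking the setting of \cite{Matveev2017}.} We must confirm that $d_g$, defined as the infimum of $g$-lengths of piecewise $C^1$ curves, is the distance used in \cite{Matveev2017}. The metric $g$ is continuous and $(M,d_g)$ is complete, so the induced length structure is the standard one. \Cref{theo:PMT_rigidity} yields a distance-preserving bijection $F$.

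\textbf{Step 3: applying \cite[Corollary C]{Matveev2017}.} The corollary gives that $F$ is a $C^{1,\alpha}$ diffeomorphism with $F^*\delta=g$ pointwise. Applying it also to $F^{-1}$ shows that the inverse is $C^{1,\alpha}$, since $\delta$ is smooth and $g$ is $C^{0,\alpha}$. Thus $F$ is a $C^{1,1-\frac np}$ isometry between $(M,g)$ and $(\RR^n,\delta)$.

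\textbf{Main obstacle.} The hard part is matching the precise hypotheses of \cite{Matveev2017}: its regularity requirement on the metrics, and whether it is stated for distance-preserving maps or for local isometries. If a local statement is needed, I would apply it on coordinate balls covering $M$ and patch the pieces together, using uniqueness of the differential almost everywhere. I would also check that the regularity of the map in \cite{Matveev2017} is exactly $C^{1,\alpha}$ when the metrics are $C^{0,\alpha}$, so that the exponent $1-\frac np$ in the statement comes out correctly.
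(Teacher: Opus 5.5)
Your proposal is correct and follows essentially the same route as the paper: invoke \Cref{theo:PMT_rigidity} for a metric-space isometry, use Morrey's embedding to get $g \in C^{0,\alpha}$ with $\alpha = 1-\frac{n}{p}$, and then promote the distance isometry to a $C^{1,\alpha}$ Riemannian isometry via \cite[Corollary C]{Matveev2017}. The extra checks you list, such as identifying $d_g$ with the length distance and the regularity of $F^{-1}$, are harmless refinements that the paper takes for granted.
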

    \begin{proof}
        \Cref{theo:PMT_rigidity} implies that $(M, g)$ and $(\RR^n, \delta)$ are isometric as metric spaces. Morrey's inequality gives $g \in C^{0,\alpha}$,  where $\alpha=1-n/p$, and the low-regularity version of Myers-Steenrod, \cite[Corollary C]{Matveev2017}, implies that there exists a $C^{1,\alpha}$ isometry from $(M, g)$ to $(\RR^n, \delta)$.  
    \end{proof}

Finally, \Cref{theo:PMT} follows directly from \Cref{theo:PMT_ineq}, \Cref{theo:PMT_rigidity}, and \Cref{cor:RigidityW1p}. 

\section*{Data availability statement}
There is no data associated with this manuscript.

\section*{Conflict of interest statement} The authors have no conflicts of interest to 

\printbibliography[heading=bibintoc]

\end{document}